\newtheorem{theorem}{Theorem}
\newtheorem{corollary}[theorem]{Corollary}
\newtheorem{definition}[theorem]{Definition}
\newtheorem{lemma}[theorem]{Lemma}
\newtheorem{proposition}[theorem]{Proposition}
\begin{document}

\title{Entropy, Pressure and Duality for Gibbs plans in  Ergodic Transport}

\author{A. O. Lopes, J. K. Mengue, J. Mohr and  R. R. Souza} 
\date{\today}

\maketitle

\centerline{Instituto de Matem\'atica, UFRGS - Porto Alegre, Brasil}
\bigskip

Let $X$ be a finite set and $\Omega=\{1,...,d\}^{\mathbb{N}}$ be the Bernoulli space. Denote by $\sigma$ the shift map acting on $\Omega$.
We consider a fixed Lipschitz cost (or potential) function $c: X \times \Omega \to \mathbb{R}$ and an associated Ruelle operator. We introduce the concept of Gibbs plan for $c$, which is a probability on $X \times \Omega$ such that the $y$ marginal is invariant. Moreover, we define entropy, pressure and equilibrium plans. The study of equilibrium plans can be seen as a generalization of the equilibrium probability problem where the concept of entropy for plans is introduced.  We show that an equilibrium plan is a Gibbs plan.

For a fixed probability $\mu$ on $X$ with supp($\mu$)$=X$, define $\Pi(\mu,\sigma)$ as the set of all Borel probabilities $\pi \in P(X\times \Omega)$ such that the $x$-marginal of $\pi$ is $\mu $ and the $y$-marginal of $\pi$ is $\sigma$-invariant.
We also investigate the pressure problem over $\Pi(\mu,\sigma)$, that is with constraint $\mu$.
Our main result is a Kantorovich duality Theorem on this setting. The pressure without constraint plays an important role in the establishment of the notion  of
admissible pair. Basically we want to transform a problem of pressure with a constraint $\mu$ on $X$ in a problem of pressure without constraint. Finally, given a parameter $\beta$, which  plays the role of the inverse of
temperature, we consider equilibrium plans for $\beta c$ and its limit $\pi_\infty$, when $\beta \to \infty$, which is also known as ground state. We compare this with other previous results on Ergodic  Transport at temperature zero.



\section{Introduction}

Kantorovich duality is a general theoretical tool for solving problems. The practical problems where one can get explicit solutions in Classical Transport Theory are in general obtained via duality techniques and the complementary slackness condition. Here we investigate this kind of result in a dynamical setting associated to a generalization of
Thermodynamic Formalism which fits well the Transport setting.

We want to show that:

1) the principle of maximizing pressure in Thermodynamic Formalism corresponds in the more general dynamical setting to  Kantorovich Duality (section 3).

2) the slackness condition is given by a simple equation which uses a generalized Ruelle operator (presented in section 2).

\medskip

Let $X$ be a finite set and $\Omega=\{1,...,d\}^{\mathbb{N}}$ the Bernoulli space with the usual metric\footnote{ $d(z,y) = \frac{1}{2^{n}}$ when $z=(z_0,z_1,...),\, y=(y_0,y_1,...)$, $z_n\neq y_n$ and $z_j=y_j,\, j<n.$}.
 We denote by  $\sigma$ the shift map acting on $\Omega$, by $P(X)$ the set of probabilities over $X$ and by $P(\Omega)$ the set of probabilities acting on the Borel sigma-algebra $\mathcal{B}(\Omega)$. Let $C(X)$ be the set of functions from $X$ to $\mathbb{R}$ and $C(\Omega)$ be the set of continuous functions from $\Omega$ to $\mathbb{R}$. We  denote by $(x,y)$ the variables on the space $X\times \Omega$. 

A Borel probability $\pi $ on $X\times \Omega$ is called a plan. For a fixed $\mu \in P(X)$ such that supp($\mu$)$=X$, define $\Pi(\mu,\sigma)$ as the set of all plans satisfying
\begin{equation}
\left\{ \begin{array}{l}
\int_{X\times \Omega} f(x) \, d\pi(x,y) = \int_X f(x) \, d\mu(x) \ \ \text{for any} \, f\in C(X), \\
\int_{X\times \Omega}  g(y) \,d\pi(x,y) = \int_{X\times \Omega}  g(\sigma(y)) \, d\pi(x,y) \ \ \text{for any} \, g\in C(\Omega),
\end{array}
\right.
\label{marginal}
\end{equation}
which means, the set of probabilities $\pi$ such that the $x$-marginal of $\pi$ is the fixed probability $\mu \in P(X)$ and the $y$-marginal of $\pi$ is $\sigma$-invariant.
Define $\Pi(\cdot,\sigma)$ as the set of plans such that its $y$-projection is $\sigma$-invariant.

We will introduce the entropy $H(\pi)$ of a plan $\pi \in \Pi(\cdot,\sigma)$  and the pressure $P(c)$ of a Lipschitz cost $c:X \times \Omega \to \mathbb{R}$. More precisely

$$P(c)= \sup_{\pi \in \Pi(\cdot,\sigma)} \int_{X\times \Omega}  c  \, d\pi\, +\, H(\pi).$$

We will show in section 3 the following result which is the  {\bf Kantorovich duality} in the (Transport) Thermodynamic Formalism setting:

\vspace{0.5cm}

\noindent
\textbf{Variational Principle}
\[\inf_{\varphi:P(c-\varphi)= 0} \int_{X} \varphi(x) \, d\mu = \sup_{\pi \in \Pi(\mu,\sigma)} \int_{X\times \Omega}  c  \, d\pi\, +\, H(\pi).\]
\textit{The infimum and supremum will be attained by unique elements $\tilde\varphi$ and $\tilde\pi$.}
\vspace{0.5cm}

If $X$ has only one element we get the classical Thermodynamical Formalism setting. The function $\tilde\varphi$ plays in some sense the role of the main  eigenvalue of the transfer operator. If $X$ has only one element, $\tilde\varphi$ coincides with $\log(\lambda)$, where $\lambda$ is the main eigenvalue of the classical Ruelle Operator \cite{PP}.

When $X$ has more than one point the main issue in the theory is to be able to characterize the optimal $\tilde\pi$. The main point here is to transform a problem of pressure with a constraint $\mu$, namely
$$ \sup_{\pi \in \Pi(\mu,\sigma)} \int_{X\times \Omega}  c  \, d\pi\, +\, H(\pi),$$
in a problem of pressure  without constraint
$$ P(\tilde{c})=\sup_{\pi \in \Pi(\cdot,\sigma)} \int_{X\times \Omega}  \tilde{c}  \, d\pi\, +\, H(\pi).$$
The $\tilde{\varphi}$ helps to do that because $\tilde{c}= c -\tilde{\varphi}$. This can be achieved via  the duality of Theorem \ref{dualidade}.
We will present a worked example (see example 3 on page 18) to illustrate how  one can get explicit solutions  of the above mentioned problem.

\bigskip






Section 2 generalize to plans what is known in Thermodynamic Formalism. Most of the proofs are a kind of standard generalization of the classical setting.
In section 2,  $\mu$ is not fixed and the results are  about plans and not exactly about transport (optimal plans with a fixed $\mu$ as marginal). We need this part in section 3 where $\mu$ is fixed and our main result is proved.

\medskip

Now we will present some motivations from results about Ergodic Optimization and transport contained in \cite{LM}.

Consider a Lipschitz function $c:X \times \Omega \to \mathbb{R}$. We denote by $\pi_{\text{opt}(c)}$ any optimal plan in $\Pi(\mu,\sigma)$, which means  that $\pi_{\text{opt}(c)}$ satisfies
\[\sup_{\pi \in \Pi(\mu,\sigma)}\int_{X\times \Omega}  c\, d\pi = \int_{X\times \Omega}  c \, d\pi_{\text{opt}(c)}.\]
It is well known that the optimal plan $\pi_{\text{opt}(c)}$ may not be unique. Associated  to the problem of finding an optimal plan $\pi_{\text{opt}(c)}$ and determining $\displaystyle\sup_{\pi \in \Pi(\mu,\sigma)}\int_{X\times \Omega}  c\, d\pi$ it is natural to consider the dual problem: if $c$ is Lipschitz continuous, there exist Lipschitz functions $V: \Omega\to \mathbb{R}$ and $m: X \to \mathbb{R}$ such that
\begin{equation}\label{subacao}
c(x,y) + V(y) - V (\sigma(y)) - m(x) \leq 0\,,
\end{equation}
and
\begin{equation}\label{intigual}
\int_{X\times \Omega}  c(x,y) \, d\pi_{\text{opt}(c)} = \int_{X}  m(x) \, d\mu.
\end{equation}

This result was proved in \cite{LM}. Results of such nature are part of what is called Ergodic Transport Theory. We point out that in \cite{LM} it is considered a minimization cost problem and here we consider a maximization cost problem. There is no conceptual difference in both settings.

If $V$ satisfies (\ref{subacao}) and \eqref{intigual}, we say that $V$ is a subaction associated to $c$ and $\mu$.
We say that $V$ is a calibrated subaction if for each given $y\in \Omega$ there exist $x\in X$ and a pre-image $w$ of $y$, such that
\[c(x,w) +V(w) - V(y) - m(x) = 0.\]

The problem above is very much related with the questions which are usually considered in Ergodic Optimization. Indeed, if we consider the particular case where $X$ has a unique point $x$, then $\mu$ will be the Dirac measure $\delta_x$, and any plan in $\Pi(\delta_x,\sigma)$ is a direct product $\delta_x \times \nu$, where $\nu$ is a invariant measure. In this case, we can identify $X\times \Omega$ with $\Omega$, $\Pi(\delta_x,\sigma)$ with the set of invariant measures on $\Omega$, $c(x,y)$ with a potential $A(y)$, and in this case $m$ will be constant and  equal  to the number
\[ m(A)=\sup_{\nu\, \text{-invariant}} \int_{ \Omega}  A \, d\nu,\]
which is called sometimes the maximal value of $A$.
In the Ergodic Transport setting we have that $m$ is a function on $x$ and also get the validity of the equations (\ref{subacao}) and (\ref{intigual}).


When $X$ has two or more points the function $m(x)$ is strongly related with the initial fixed $\mu$ in the following sense:
%
%
%
 If $c$ and $m$ satisfy \eqref{subacao} and \eqref{intigual} for some $V$, then
\[\sup_{P\in \Pi(\cdot,\sigma)} \int_{X\times \Omega} c(x,y) -m(x) \, dP = \sup_{\pi\in \Pi(\mu,\sigma)} \int_{X\times \Omega} c(x,y) -m(x) \, d\pi = 0.\]

It is well known that Ergodic Optimization is related with Thermodynamic Formalism via the zero temperature limit (see \cite{BLLbook}). In this way it is natural to investigate the possible generalizations of the transfer operator (also called Ruelle operator), and other properties which appear in Thermodynamic Formalism for the Ergodic Transport setting. This will be done in sections 2 and 3. 





We will show in section \ref{zero} that in the zero temperature limit the function $\tilde\varphi(x)$  will correspond to the function $m(x)$ previously defined. The optimal plan $\pi_{opt(c)}$ will correspond, in the zero temperature, to the limit of the equilibrium plans $\tilde\pi$.

The analogous questions in the case where $X$ is not finite and $\mu$ is a general probability on $X$ will require
a different type of transfer operator. This will introduce some technical difficulties which are similar to the ones analyzed in \cite{LMMS},
where it is considered a general a priory probability. We will not address here this more general problem.

In the appendix we will present some technical results which are needed in our reasoning.

The setting presented here is different from \cite{GL2} \cite{KW} \cite{Lal} .

\section{Thermodynamic formalism over $\Pi(\cdot,\sigma)$ } \label{EP}

We assume that $c:X  \times \Omega\to \mathbb{R}$ is a Lipschitz function and we define the transfer operator $L_c$, which acts  on $C(\Omega)$, in the following way: given $\psi: \Omega\to \mathbb{R}$ we have
\[(L_c \psi )(y) = \sum_{x\in X}\sum_{\sigma(w)=y} e^{c(x,w)}\psi(w)=\sum_{x\in X}\sum_{a\in\{1,...,d\}} e^{c(x,ay)}\psi(ay).\]

\begin{proposition}\label{RPF} Let $c:X  \times \Omega\to \mathbb{R}$ be a Lipschitz function.
There exists a positive Lipschitz function $h$ and a positive number $\lambda$, such that,
\[ \sum_{x\in X}\sum_{\sigma(w)=y} e^{c(x,w)}h(w) =\lambda h(y).\]
Moreover, $\lambda$ is simple and the remainder of spectrum is contained in a disc with radius strictly smaller than $\lambda$.
\end{proposition}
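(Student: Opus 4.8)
The plan is to reduce Proposition~\ref{RPF} to the classical Ruelle--Perron--Frobenius theorem for a Hölder (indeed Lipschitz) potential on a subshift of finite type. First I would observe that the operator $L_c$ acting on $C(\Omega)$ can be rewritten as a \emph{classical} transfer operator associated with a potential on an enlarged symbol space. Concretely, since $X=\{x_1,\dots,x_k\}$ is finite, consider the Bernoulli space $\widehat\Omega = (X\times\{1,\dots,d\})^{\mathbb N}$ with shift $\widehat\sigma$, and the projection $\Theta:\widehat\Omega\to\Omega$ that forgets the $X$-coordinates. Define the potential $\widehat c(\widehat w) = c(x_0, \Theta(\widehat w))$ where $x_0$ is the $X$-component of the first symbol of $\widehat w$; this is Lipschitz on $\widehat\Omega$ because $c$ is Lipschitz on $X\times\Omega$ and $\Theta$ is Lipschitz. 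One checks directly from the definition of the pre-images under $\widehat\sigma$ (each pre-image of $y\in\Omega$ under $\sigma$ combined with a choice of $x\in X$ for the new coordinate corresponds to a pre-image of any lift of $y$ under $\widehat\sigma$) that $L_c$ is intertwined with the classical Ruelle operator $L_{\widehat c}$ on $C(\widehat\Omega)$ via $\Theta$, in the sense that $L_{\widehat c}$ preserves functions of the form $\psi\circ\Theta$ and $L_{\widehat c}(\psi\circ\Theta) = (L_c\psi)\circ\Theta$.

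Next I would invoke the classical RPF theorem (e.g.\ \cite{PP}): for the Lipschitz potential $\widehat c$ on the full shift $\widehat\Omega$, there is a simple maximal positive eigenvalue $\lambda>0$ with a positive Lipschitz eigenfunction $\widehat h$, and the rest of the spectrum of $L_{\widehat c}$ acting on the Hölder (Lipschitz) space lies in a disc of radius strictly less than $\lambda$. The key remaining point is to show that $\widehat h$ may be taken to depend only on the $\Omega$-coordinate, i.e.\ $\widehat h = h\circ\Theta$ for some positive Lipschitz $h:\Omega\to\mathbb R$. This follows from uniqueness/simplicity of the eigenvalue: applying $L_{\widehat c}$ to the function $y\mapsto \int \widehat h\, $ averaged suitably, or more simply noting that $L_{\widehat c}$ maps the closed subspace $\{\psi\circ\Theta : \psi\in C(\Omega)\}$ into itself and the operator it induces there is exactly $L_c$, so $\lambda$ is also an eigenvalue of $L_c$ on $C(\Omega)$ with a positive Lipschitz eigenfunction; and because $\lambda$ is the spectral radius of $L_{\widehat c}$ on the larger space it is certainly $\geq$ the spectral radius of $L_c$ on the subspace, while the positive eigenfunction forces equality and simplicity by the usual cone/positivity argument.

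For the spectral gap statement for $L_c$ itself, I would argue that the Lipschitz-norm spectral radius of $L_c$ on $C(\Omega)$ is bounded by that of $L_{\widehat c}$ on the corresponding Lipschitz space (since $\Theta$ is Lipschitz, $\psi\mapsto\psi\circ\Theta$ is a bounded embedding of Lipschitz spaces and intertwines the operators), so the peripheral spectrum of $L_c$ away from $\lambda$ is contained in the same disc of radius $<\lambda$; combined with simplicity of $\lambda$ this is exactly the claim. An alternative, entirely self-contained route — which the authors may well prefer since section~2 is meant to be "a kind of standard generalization of the classical setting" — is to avoid the symbolic extension and instead mimic the classical proof directly: show $L_c$ preserves a suitable cone of positive log-Lipschitz functions, apply a Birkhoff--Hilbert projective-metric contraction (a cone-contraction argument) to get existence and uniqueness of $(h,\lambda)$, and then run the standard argument using the normalized operator $\widetilde L(\psi) = \frac{1}{\lambda h}L_c(h\psi)$, which is a transfer operator fixing the constant function $1$, to deduce the spectral gap from quasi-compactness (Lasota--Yorke inequality plus Arzelà--Ascoli).

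The main obstacle I anticipate is the bookkeeping in the cone-contraction / Lasota--Yorke estimate: one must verify the uniform bounded-distortion property for finite compositions of the branches $w\mapsto aw$ together with the summation over $x\in X$, i.e.\ control the Lipschitz constant of $\log\big(\sum_{x}e^{c(x,aw)}\big)$ and of Birkhoff-type sums $\sum_{j}\big[\,\cdots\,\big]$ along $\sigma$-orbits, which is routine but is where all the real work sits. (Writing $c$ as a potential on the extended shift, as in the first paragraph, sidesteps this by quoting \cite{PP}; so the "hard part" is really just deciding whether to present a proof by reduction or a proof by repetition of the classical argument.)
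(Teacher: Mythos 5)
Your proposal is correct in spirit but takes a much heavier route than the paper, and one step of your preferred (reduction) route is not justified as written. The paper's entire proof is the observation that the sum over the finite set $X$ can be absorbed into the potential: setting $A(y)=\log\big(\sum_{x\in X}e^{c(x,y)}\big)$, one has $(L_c\psi)(y)=\sum_{\sigma(w)=y}e^{A(w)}\psi(w)$, and $A$ is Lipschitz with the same constant as $c$. Thus $L_c$ \emph{is} a classical Ruelle operator on $\Omega$ itself, and the existence of $(h,\lambda)$, simplicity, and the spectral gap are quoted directly from \cite{PP}; no extended symbol space, no factoring argument, no cone construction is needed. Your second, self-contained alternative (cone contraction plus Lasota--Yorke) would also work, but it re-proves exactly what this one-line identification lets you import.

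Concerning your first route via $\widehat\Omega=(X\times\{1,\dots,d\})^{\mathbb N}$: the intertwining $L_{\widehat c}(\psi\circ\Theta)=(L_c\psi)\circ\Theta$ is fine, but the inference ``$L_{\widehat c}$ preserves the closed subspace $\{\psi\circ\Theta\}$, hence $\lambda$ is an eigenvalue of $L_c$ with a positive Lipschitz eigenfunction'' is a non sequitur: invariance of a subspace does not by itself place the top eigenfunction $\widehat h$ in that subspace, and the spectrum of a restriction is not automatically controlled by that of the ambient operator. To close this you would need an actual argument, e.g.\ that $\lambda^{-n}L_{\widehat c}^{n}\mathbf 1$ converges uniformly to (a multiple of) $\widehat h$, each iterate lies in the closed subspace because $\mathbf 1$ does, hence $\widehat h=h\circ\Theta$; and then a separate (approximate-point-spectrum or quasi-compactness) argument to push the gap down to $L_c$ on the Lipschitz space. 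All of this is avoidable: once you notice $L_c=L_A$ with $A$ as above, the classical theorem gives the statement verbatim.
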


\begin{proof} If $K>0$ is the Lipschitz constant of $c$, and we  consider the potential $ A(y) = \log\left(\sum_{x}e^{c(x,y)}\right)$, we have that $A$ is a Lipschitz function with constant $K$. \footnote{ $A(y)=\log\left(\sum_{x}e^{c(x,y)}\right)\leq\log\left(\sum_{x}e^{c(x,z)+K|y-z |}\right)=K|y-z |+A(z)$.} Then, $(L_{c}\psi)(y) =\sum_{\sigma(w)=y} e^{A(w)}\psi(w),$ and the proposition follows easily from classical arguments, see section 2 in \cite{PP}.
\end{proof}



We say that a Lipschitz cost (potential) $c$ is normalized if for any $y\in \Omega$, we have
\[\sum_{x\in X} \sum_{\sigma(w)=y}e^{c(x,w)} = 1.\]
If $c$ is Lipschitz and $\lambda$, $h$ are given by the Proposition \ref{RPF}, then $\overline{c}(x,y) = c(x,y) + \log(h(y))-\log(h(\sigma(y))) -\log(\lambda)$ is normalized.

Let us  assume now that $c:X \times \Omega \to \mathbb{R}$ is normalized. In this case we define the dual operator of the transfer operator $L_c$ in the following way: for a given probability $\nu$ on $\Omega$, we get a new probability $L_c^*(\nu)$ such that for any continuous function $\psi:\Omega\to\mathbb{R}$ we have

\begin{equation}\label{opdual}L_c^*(\nu)(\psi) = \int_{\Omega} L_c(\psi) \, d\nu= \int_{\Omega}\sum_{x\in X}\sum_{\sigma(w)=y} e^{c(x,w)}\psi(w) \, d\nu(y).\end{equation}

Using results of the classical thermodynamical formalism (see \cite{PP}) for the potential $A(y) = \log\left(\sum_{x}e^{c(x,y)}\right)$ we have
that the operator $L_c^*$ has a unique fixed point probability $\nu_c$, i.e. $L_c^*(\nu_c)=\nu_c$. We call $\nu_c$ the
{\it Gibbs probability measure} associated to the normalized cost $c$.


\bigskip
We want to extend the above definitions of transfer operator (and, moreover,  of the dual operator) which acts on functions of the variable $y$  to functions which depends on coordinates $(x,y)$. Let  $c$ be a normalized cost, then denote
\[\hat L_c(u)(y) = \sum_x\sum_{\sigma(w)=y} e^{c(x,w)}u(x,w)\] for any $u:X\times \Omega \to \mathbb{R}$.
Note that $\hat L_c$ sends $u:X \times  \Omega\to \mathbb{R}$ to the function denoted by $\hat L_c(u):\Omega \to \mathbb{R}$.
 For such normalized $c$ we denote  $\hat L_c^*$
 the operator on $P(X\times \Omega)$ defined by 
\begin{equation}\label{Op}
 \hat L_c^*(\pi)(u(z,y))\,=\,\int_{X\times \Omega} \left(\sum_x\sum_{\sigma(w)=y} e^{c(x,w)} u(x,w) \right) \, d\pi(z,y)\,.
\end{equation}


\begin{definition}

A probability on $X\times \Omega$ which is a fixed point for  $\hat L_c^*$ is called {\bf a Gibbs plan} for the normalized cost (potential) $c$.
It will be denoted by $\pi_c$.

\end{definition}

The normalization property implies that $\hat L_c^*(\pi)(1)=1$, i.e. the
$\hat L_c^*$ preserves the convex and compact set $P(X\times \Omega)$.

\begin{proposition} \label{Gi} Given a  normalized cost $c$ there exists a {unique} fixed point $\pi_c$ for the operator $\hat L_c^*$. We have that $\pi_c\in\Pi(\cdot,\sigma)$  and the $y$-marginal of $\pi_c$ is the  Gibbs measure $\nu_c$. Moreover, the support of $\pi_c$ is the set $X\times \Omega.$
\end{proposition}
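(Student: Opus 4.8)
The plan rests on one structural observation: in formula~\eqref{Op} the integrand $\sum_x\sum_{\sigma(w)=y}e^{c(x,w)}u(x,w)=\hat L_c(u)(y)$ is a function of the variable $y$ alone, so $\hat L_c^*(\pi)$ depends on $\pi$ only through its $y$-marginal. Writing $\pi_y$ for that marginal, this reads $\hat L_c^*(\pi)(u)=\int_\Omega \hat L_c(u)(y)\,d\pi_y(y)$; feeding in $u(x,w)=g(w)$ shows moreover that the $y$-marginal of $\hat L_c^*(\pi)$ equals $L_c^*(\pi_y)$, with $L_c^*$ as in \eqref{opdual}.

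First I would settle uniqueness. If $\pi$ is any fixed point of $\hat L_c^*$, projecting the identity $\pi=\hat L_c^*(\pi)$ onto the $y$-coordinate gives $\pi_y=L_c^*(\pi_y)$, so by the classical uniqueness of the fixed point of $L_c^*$ for the Lipschitz potential $A(y)=\log(\sum_x e^{c(x,y)})$ (recalled just before the definition, via \cite{PP}) we get $\pi_y=\nu_c$. Since the right-hand side of $\pi=\hat L_c^*(\pi)$ sees only $\pi_y=\nu_c$, the fixed point $\pi$ is forced to equal the probability $\pi_c$ defined by
\[\int_{X\times\Omega}u\,d\pi_c \;=\; \int_\Omega\Bigl(\sum_x\sum_{\sigma(w)=y}e^{c(x,w)}\,u(x,w)\Bigr)d\nu_c(y),\]
so at most one fixed point exists, and this is the natural candidate for existence.

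Next I would verify that the $\pi_c$ above does the job. Normalization of $c$ gives $\pi_c(1)=\nu_c(1)=1$, and positivity of $e^{c}$ makes $\pi_c$ a probability. Its $y$-marginal is $L_c^*(\nu_c)=\nu_c$, hence $\hat L_c^*(\pi_c)(u)=\int_\Omega\hat L_c(u)\,d\nu_c=\pi_c(u)$ and $\pi_c$ is indeed fixed. Because $c$ is normalized, $\nu_c$ is $\sigma$-invariant (apply $L_c^*(\nu_c)=\nu_c$ to $g\circ\sigma$ and use $\sum_x\sum_{\sigma(w)=y}e^{c(x,w)}=1$), so $\pi_c\in\Pi(\cdot,\sigma)$ with $y$-marginal $\nu_c$. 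For the support, since $X$ is finite and discrete it suffices to show $\pi_c(\{x_0\}\times[b_0,\dots,b_k])>0$ for every $x_0\in X$ and every cylinder; bounding the inner sum below by its single term at the preimage $b_0y$ (the unique preimage of $y\in[b_1,\dots,b_k]$ lying in $[b_0,\dots,b_k]$) yields
\[\pi_c(\{x_0\}\times[b_0,\dots,b_k]) \;\ge\; \int_{[b_1,\dots,b_k]}e^{c(x_0,\,b_0y)}\,d\nu_c(y)\;>\;0,\]
using $e^{c}>0$ and the full support of the Gibbs measure $\nu_c$ (classical, see \cite{PP}).

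I do not expect a genuine obstacle here: once the reduction ``$\hat L_c^*$ sees only the $y$-marginal'' is in place, existence, uniqueness, membership in $\Pi(\cdot,\sigma)$ and full support all follow by bookkeeping on top of Proposition~\ref{RPF} and the classical one-dimensional thermodynamic formalism. If anything is delicate it is keeping straight which facts about $\nu_c$ (existence, uniqueness, $\sigma$-invariance, full support) are imported from the scalar theory and invoking them cleanly.
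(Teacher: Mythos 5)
Your proof is correct and follows essentially the same route as the paper: the key reduction --- that $\hat L_c^*(\pi)$ depends on $\pi$ only through its $y$-marginal, forcing that marginal to be the unique fixed point $\nu_c$ of $L_c^*$ and hence pinning down $\pi(u)=\int_\Omega \hat L_c(u)\,d\nu_c$ --- is exactly the paper's uniqueness argument, and your invariance, marginal and full-support computations coincide with those in the appendix. The only divergence is existence: the paper invokes the Schauder--Tychonov fixed point theorem and then identifies the $y$-marginal, whereas you verify directly that the explicit candidate $\pi_c(u)=\int_\Omega \hat L_c(u)\,d\nu_c$ is a fixed point, a slightly more constructive variant that imports nothing beyond the classical facts about $\nu_c$ the paper already uses.
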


We refer the reader to the appendix for the proof of the above result.

\bigskip

Now we define the entropy of plans (Definition \ref{entropia}), the pressure of a Lipschitz cost, and the concept of equilibrium plan for a cost (Definition \ref{def:pressure}). We will also see some examples and properties of such concepts, and prove a variational principle for the pressure which shows that the equilibrium plan for a cost is the Gibbs plan for the associated normalized cost (Theorem \ref{gibbs-equilibrium}). We will follow the main lines of  \cite{PP}, Chapter 3.

We denote by $[x,y_0...y_n]= \{(z,w)\in X\times\Omega:z=x,w_0=y_0,...,w_n=y_n\}$ and $[y_1...y_n]=\{w\in \Omega:w_0=y_1,...,w_{n-1}=y_n\}. $
Consider a fixed  plan  $\pi\in \Pi(\cdot,\sigma)$ with $y$-marginal $\nu$ and define $$J_\pi^{n}(x,y) = \frac{\pi([x,y_0...y_n])}{\nu([y_1...y_n])}$$ if $y=(y_1,y_2,...) \in \text{ supp}(\nu)$. From the Increasing Martingale Theorem the functions $J_\pi^{n}$ converge to a function $J_\pi(x,y)$ in $L^{1}(X\times\Omega,\mathcal{B}(X\times\Omega),\pi)$ and for $\pi$ a.e. $(x,y)$. For each plan $\pi$ this function $J_\pi$ can be also obtained via the Radon-Nikodyn Theorem. This is carefully explained in the appendix. We have, $J_\pi>0$ a.e. ($\pi$) and
$ \sum_x\sum_a J_\pi(x,ay) = 1$. For a plan  $\pi\in \Pi(\cdot,\sigma)$ we call $J_\pi$ the {\bf Jacobian of the plan}. For a general $\pi\in \Pi(\cdot,\sigma)$ the Jacobian $J_\pi$ is not necessarily Lipschitz but just measurable.


\begin{lemma}\label{L01} Suppose $\pi$ is a plan in $\Pi(\cdot,\sigma)$.
Then, for every $w\in C(X,\Omega)$,
\[\int_{X\times \Omega} \sum_{x}\sum_{a} J_\pi(x,ay)w(x,ay)\,d\pi = \int_{X\times \Omega} w(z,y)\, d\pi.\]
\end{lemma}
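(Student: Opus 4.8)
The plan is to prove the identity first for functions depending on only finitely many coordinates and then extend by continuity. Write $\nu$ for the $y$-marginal of $\pi$. First, for fixed $w$ the integrand $y\mapsto\sum_x\sum_a J_\pi(x,ay)w(x,ay)$ on the left depends only on $y$, so that side equals $\int_\Omega\big(\sum_x\sum_a J_\pi(x,ay)w(x,ay)\big)\,d\nu(y)$. Both sides are linear in $w$ and continuous for the uniform norm: the right side trivially, and the left side because it is bounded by $\|w\|_\infty\int_\Omega\sum_x\sum_a J_\pi(x,ay)\,d\nu(y)=\|w\|_\infty$ by the already-recorded identity $\sum_x\sum_a J_\pi(x,ay)=1$ ($\nu$-a.e.). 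Since the functions depending on $x$ and on $y_0,\ldots,y_N$ form a dense subalgebra of $C(X\times\Omega)$ by Stone--Weierstrass, it suffices to prove the equality when $w=\mathbf 1_{[x',y_0'\ldots y_N']}$ is a cylinder indicator.

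For such a $w$ one has $w(x,ay)=1$ precisely when $x=x'$, $a=y_0'$ and $y\in[y_1'\ldots y_N']$, so the left side reduces to $\int_{[y_1'\ldots y_N']}J_\pi(x',y_0'y)\,d\nu(y)$, where $y_0'y$ denotes $y$ with the symbol $y_0'$ prepended. I would evaluate this using the finite approximants $J_\pi^{\,n}$ with $n\ge N$: partitioning $[y_1'\ldots y_N']$ into the length-$n$ cylinders $[y_1'\ldots y_N'\,b_N\ldots b_{n-1}]$, on each of these $J_\pi^{\,n}(x',y_0'y)=\pi([x',y_0'y_1'\ldots y_N'\,b_N\ldots b_{n-1}])\,/\,\nu([y_1'\ldots y_N'\,b_N\ldots b_{n-1}])$ is constant, the $\nu$-mass of the cylinder cancels the denominator, and the integral over that cylinder is just $\pi([x',y_0'y_1'\ldots y_N'\,b_N\ldots b_{n-1}])$. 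Summing over $(b_N,\ldots,b_{n-1})$ these disjoint cylinders reassemble into $[x',y_0'\ldots y_N']$, so
\[\int_{[y_1'\ldots y_N']}J_\pi^{\,n}(x',y_0'y)\,d\nu(y)=\pi([x',y_0'\ldots y_N'])=\int_{X\times\Omega}w\,d\pi\qquad(n\ge N).\]

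The one genuine difficulty is to let $n\to\infty$ on the left and conclude the same value for $\int_{[y_1'\ldots y_N']}J_\pi(x',y_0'y)\,d\nu(y)$: this does not follow directly from $J_\pi^{\,n}\to J_\pi$ in $L^1(\pi)$ (or $\pi$-a.e.), because the integral is against $\nu$ on $\Omega$ while the argument of $J_\pi^{\,n}$ has been shifted by $y\mapsto(x',y_0'y)$, so one must relate a.e.-$\pi$ statements on $X\times\Omega$ to a.e.-$\nu$ statements after this shift — using the $\sigma$-invariance of $\nu$ and that it is the $y$-marginal of $\pi$ — and then pass to the limit by uniform integrability of the martingale $(J_\pi^{\,n})_n$. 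The cleanest way to do this is to use the Radon--Nikodym description of $J_\pi$ from the appendix: writing the disintegration $\pi=\int\pi_y\,d\nu(y)$ and $J_\pi(x,y)=\pi_y(\{x\})\,\iota(y)$, where $\iota$ is the local Jacobian of $\nu$ for $\sigma$ (so that $\sum_{\sigma z=y}\iota(z)=1$), the identity for a general $w$ factors as an averaging in $x$ followed by the classical dual fixed-point equation $\int_\Omega\sum_{\sigma z=y}\iota(z)\bar w(z)\,d\nu(y)=\int_\Omega\bar w\,d\nu$ for the invariant measure $\nu$, with $\bar w(z)=\sum_x\pi_z(\{x\})w(x,z)$; this last equation is exactly the present lemma in the case where $X$ is a single point and is standard (\cite{PP}, Ch.~3). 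Everything outside this limit interchange is the routine cylinder bookkeeping of that reference.
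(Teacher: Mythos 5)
Your proposal is correct, and its first two steps coincide with the paper's own proof: the exact identity for $J_\pi^{n}$ on cylinder indicators with $n$ at least the length of the cylinder, followed by linearity and density of locally constant functions in $C(X\times\Omega)$. Where you genuinely diverge is the passage $n\to\infty$: the paper simply writes the limit using the martingale convergence $J_\pi^{n}\to J_\pi$, while you avoid the limit interchange altogether by factoring $J_\pi(x,y)=\pi_y(\{x\})\,\iota(y)$ through the disintegration $\pi=\int\pi_y\,d\nu(y)$ and reducing the lemma to the classical identity $\int_\Omega\sum_{\sigma z=y}\iota(z)\bar w(z)\,d\nu(y)=\int_\Omega\bar w\,d\nu$ for the invariant marginal $\nu$. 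That route is sound, and in fact once the factorization is in hand it proves the lemma for every bounded measurable $w$, making the cylinder bookkeeping and the density step unnecessary. Two caveats, though. First, the factorization is not literally ``the Radon--Nikodym description from the appendix'': the appendix only gives $J_\pi=\sum_{x,a}\mathbb{I}_{[x,a]}E(\mathbb{I}_{[x,a]}\mid\sigma^{-1}\mathcal{B})$, so you must prove $J_\pi(x,y)=\pi_y(\{x\})\iota(y)$ yourself, e.g.\ by the tower property, or by splitting $\pi([x,y_0\ldots y_n])/\nu([y_1\ldots y_n])$ into $\bigl(\pi([x,y_0\ldots y_n])/\nu([y_0\ldots y_n])\bigr)\cdot\bigl(\nu([y_0\ldots y_n])/\nu([y_1\ldots y_n])\bigr)$ and letting each factor converge $\nu$-a.e.; and the dual identity for merely measurable $\bar w$ (it must be measurable, since $\pi_z(\{x\})$ is) should be derived from $E_\nu(\bar w\mid\sigma^{-1}\mathcal{B})(y)=\sum_{\sigma z=\sigma y}\iota(z)\bar w(z)$ together with $\sigma$-invariance, rather than quoted for continuous weights only. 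Second, the difficulty you flag in the paper's route is smaller than you suggest: $E(\mathbb{I}_{[x,a]}\mid\mathcal{B}_n)$ and its limit are functions of $\sigma(y)$ alone, and the push-forward of $\pi$ under $(z,y)\mapsto\sigma(y)$ is $\sigma_*\nu=\nu$ because $\nu$ is invariant, so the $L^1(\pi)$ martingale convergence is exactly $L^1(\nu)$ convergence of the shifted-variable functions $g^{x,a}_n$, and one may pass the limit inside $\int_\Omega\sum_{x,a}g^{x,a}_n(y)w(x,ay)\,d\nu(y)$ in one line; this also fixes, at the same stroke, the question of which version of $J_\pi(x,ay)$ the statement refers to.
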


This result is proved 
 in the Appendix.

\medskip

\begin{lemma}\label{L2}
If a normalized Lipschitz potential $c(x,y)$ has a Gibbs plan $\pi_c$, then $J_{\pi_c} = e^c$.
\end{lemma}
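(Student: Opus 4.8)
The plan is to identify $J_{\pi_c}$ by matching it against $e^{c}$ through two different characterizations of the density that makes $\pi_c$ a fixed point. By Proposition~\ref{Gi} the Gibbs plan $\pi_c$ lies in $\Pi(\cdot,\sigma)$, so its Jacobian $J_{\pi_c}$ is defined and Lemma~\ref{L01} is available. First I would write down the two relevant identities. Since $\pi_c$ is a fixed point of $\hat L_c^{*}$, equation~(\ref{Op}) gives, for every $u\in C(X\times\Omega)$,
\[
\int_{X\times\Omega}\Big(\sum_{x}\sum_{a}e^{c(x,ay)}\,u(x,ay)\Big)\,d\pi_c(z,y)=\int_{X\times\Omega}u\,d\pi_c,
\]
while Lemma~\ref{L01} (applied with $w=u$) gives
\[
\int_{X\times\Omega}\Big(\sum_{x}\sum_{a}J_{\pi_c}(x,ay)\,u(x,ay)\Big)\,d\pi_c(z,y)=\int_{X\times\Omega}u\,d\pi_c.
\]
Subtracting, the function $\phi:=e^{c}-J_{\pi_c}\in L^{1}(\pi_c)$ satisfies $\int_{X\times\Omega}\sum_{x}\sum_{a}\phi(x,ay)\,u(x,ay)\,d\pi_c(z,y)=0$ for all $u\in C(X\times\Omega)$.

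The next step is to upgrade this from continuous $u$ to indicator functions of Borel sets. Each side of the two displayed identities is a positive linear functional on $C(X\times\Omega)$ sending $1$ to $1$ (for the first this is the normalization of $c$; for the second it is the fact that $\sum_{x}\sum_{a}J_{\pi_c}(x,ay)=1$ a.e., so that the functional is bounded by $\|u\|_{\infty}$), hence each is represented by a Borel probability measure on $X\times\Omega$; as these two measures agree on $C(X\times\Omega)$ they coincide. Consequently $\int_{X\times\Omega}\sum_{x}\sum_{a}\phi(x,ay)\,\mathbf 1_{B}(x,ay)\,d\pi_c(z,y)=0$ for every Borel $B\subseteq X\times\Omega$.

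Finally I would test against cylinders. Fixing $x_0\in X$ and a word $b_0b_1\cdots b_n$ and taking $B=[x_0,b_0b_1\cdots b_n]$, one has $\mathbf 1_{B}(x,ay)=\mathbf 1_{\{x=x_0\}}\,\mathbf 1_{\{a=b_0\}}\,\mathbf 1_{[b_1\cdots b_n]}(y)$, so the previous identity collapses to $\int_{[b_1\cdots b_n]}\phi(x_0,b_0 y)\,d\nu_c(y)=0$, where $\nu_c$ is the $y$-marginal of $\pi_c$ (Proposition~\ref{Gi}). Since the cylinders $[b_1\cdots b_n]$ generate $\mathcal B(\Omega)$, this forces $\phi(x_0,b_0 y)=0$ for $\nu_c$-a.e.\ $y$; letting $b_0$ vary gives $\phi(x_0,\cdot)=0$ $\nu_c$-a.e., and then letting $x_0$ range over the finite set $X$, together with the fact that the $y$-marginal of $\pi_c$ equals $\nu_c$, yields $\phi=0$ $\pi_c$-a.e., i.e.\ $J_{\pi_c}=e^{c}$ a.e.\ (with $e^{c}$ the Lipschitz representative). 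The main obstacle is the second step: justifying the passage from continuous test functions to indicators, equivalently the a.e.\ uniqueness of the density occurring in Lemma~\ref{L01}; the cylinder computation in the last step is then just bookkeeping.
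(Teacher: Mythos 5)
Your argument is correct, but it is not the route the paper takes: the paper disposes of Lemma \ref{L2} by citing Proposition 3.2 and Corollary 3.2.2 of \cite{PP} ``with simple adaptations'', i.e.\ it imports the classical identification of the Jacobian of a Gibbs measure, whereas you give a self-contained proof inside the paper's own framework, playing the fixed-point identity $\hat L_c^*(\pi_c)=\pi_c$ against Lemma \ref{L01}, with Proposition \ref{Gi} supplying that $\pi_c\in\Pi(\cdot,\sigma)$ and that its $y$-marginal is $\nu_c$. This is sound, and it is in effect the ``adaptation'' the paper leaves implicit; its advantage is that it only uses lemmas already proved here. One remark: the step you flag as the main obstacle --- upgrading from continuous test functions to indicators via Riesz representation --- is not actually needed. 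Since $X$ is finite and $\Omega$ is the Bernoulli space, cylinders $[x_0,b_0b_1\cdots b_n]$ are clopen, so their indicators are continuous and can be plugged directly into the two identities, giving at once $\int_{[b_1\cdots b_n]}\bigl(e^{c(x_0,b_0y)}-J_{\pi_c}(x_0,b_0y)\bigr)\,d\nu_c(y)=0$. The remaining bookkeeping is fine; the two points worth making explicit are that the cylinders $[b_1\cdots b_n]$ form a $\pi$-system generating $\mathcal{B}(\Omega)$, so the vanishing of these integrals forces $e^{c(x_0,b_0\cdot)}=J_{\pi_c}(x_0,b_0\cdot)$ $\nu_c$-a.e., and that passing from this statement (for each fixed first symbol $b_0$) to $e^{c(x_0,\cdot)}=J_{\pi_c}(x_0,\cdot)$ $\nu_c$-a.e.\ uses the $\sigma$-invariance of $\nu_c$ (the preimage under $\sigma$ of a $\nu_c$-null set is $\nu_c$-null); finiteness of $X$ then gives $J_{\pi_c}=e^{c}$ $\pi_c$-a.e., which is the sense in which the lemma is stated and used.
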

\begin{proof}
This follows easily  from \cite{PP} proposition 3.2 and corollary 3.2.2 with simple adaptations in the computations.


\end{proof}

\noindent
\textbf{Example 1:}

We consider as an example the case where $X=\{1,2\}$, $\Omega=\{1,2\}^{\mathbb{N}}$, and $c$ is such that depends just on two coordinates on $y$, that is, $c(x,y) = c(x,\,y_1\,y_2)=c^{\,\,x}(y_1y_2)$, $x=1,2$, let us denote by $a^{i}_{r,s}= e^{ c^i(r,s)}$, $i,r,s=1,2$, and

\[A^{1}=\left(\begin{array}{cc}a^{1}_{11} & a^{1}_{12}\\ a^{1}_{21} & a^{1}_{22} \end{array}\right)=\left(\begin{array}{cc}1 & 1\\ 1 & 1 \end{array}\right)\,\,\,,\,\,
A^{2}=\left(\begin{array}{cc}a^{2}_{11} & a^{2}_{12}\\ a^{2}_{21} & a^{2}_{22} \end{array} \right)= \left(\begin{array}{cc} 1 & 1\\ 1 & 2 \end{array}\right).\]

We want to determine the Gibbs plan $\pi$ for such $c$.

The action of $L_c$ over potentials that depends only of one coordinate $y_1$, $y_1=1,2$, can be written in the form of the action on a vector $h$
\[(h_1,h_2)[A^{1}+A^{2}] = (h_1,h_2)\left(\begin{array}{cc}a^{1}_{11}+a^{2}_{11} & a^{1}_{12}+a^{2}_{12}\\ a^{1}_{21}+a^{2}_{21} & a^{1}_{22}+a^{2}_{22} \end{array}\right).\]
In this case $\lambda=\frac{5+\sqrt{17}}{2}$ and $(h_1,h_2) = (3+\sqrt{17},5+\sqrt{17})$ are respectively the associated eigenvalue and eigenvector. After normalization, we get the column stochastic matrix
\[\overline{A}=\left(\begin{array}{cc}\frac{a^{1}_{11}+a^{2}_{11}}{\lambda} & \frac{a^{1}_{12}+a^{2}_{12}}{\lambda}\frac{h_1}{h_2}\\ \\ \frac{a^{1}_{21}+a^{2}_{21}}{\lambda}\frac{h_2}{h_1} & \frac{a^{1}_{22}+a^{2}_{22}}{\lambda} \end{array}\right)
=\left(\begin{array}{cc} \frac{4}{5+\sqrt{17}} & \frac{4}{5+\sqrt{17}}\frac{3+\sqrt{17}}{5+\sqrt{17}}\\ \\ \frac{4}{3+\sqrt{17}} & \frac{6}{5+\sqrt{17}} \end{array}\right) =\left(\begin{array}{cc} 0,4384 & 0,3423\\ \\ 0,5616 & 0,6577 \end{array}\right) .\]

The stationary initial probability on $\{1,2\}$ of a stochastic matrix
\[\left(\begin{array}{cc}a & b\\ c & d \end{array}\right)\]
is given by $\nu=(p_1,p_2)^{T}=(\frac{b}{b+1-a}, \frac{1-a}{b+1-a})^{T}$. Finally,  we get $p_1=0,3786$ and $p_2=0,6213$.
In order to obtain the Gibbs plan we need to split $\overline{A}$ in the form  $\overline{A^{1}}$, $  \overline{A^{2}}$ where
\[\overline{A^{1}}= \left(\begin{array}{cc}\frac{2}{5+\sqrt{17}} & \frac{2}{5+\sqrt{17}}\frac{3+\sqrt{17}}{5+\sqrt{17}}\\ \\ \frac{2}{3+\sqrt{17}} & \frac{2}{5+\sqrt{17}} \end{array}\right) =\left(\begin{array}{cc} 0,2192 & 0,1711 \\ \\ 0,2808 &  0,2192 \end{array}\right) \,\,\,\]
\[\overline{A^{2}}= \left(\begin{array}{cc}\frac{2}{5+\sqrt{17}} & \frac{2}{5+\sqrt{17}}\frac{3+\sqrt{17}}{5+\sqrt{17}}\\ \\ \frac{2}{3+\sqrt{17}} & \frac{4}{5+\sqrt{17}} \end{array}\right) =\left(\begin{array}{cc} 0,2192 & 0,1711 \\ \\ 0,2808 &  0,4385 \end{array}\right)
.\]

This defines the Jacobian of the plan $\pi$ we are looking for.

Given, $u:X \times \Omega \to \mathbb{R}$, where $u(x,y)$ is such that depend just on the first coordinate of $y$ in the Bernoulli space, we have that
$\int u d\pi = u^{1}_1\pi^{1}_1+u^{1}_2\pi^{1}_2+u^{2}_1\pi^{2}_1+u^{2}_2\pi^{2}_2$. Using (\ref{Op}) we get
\[\pi^{1}=\overline{A^{1}}\nu= \left(\begin{array}{cc} 0,1893 \\ 0,2425\end{array}\right) \] and \[\pi^{2}=\overline{A^{2}}\nu = \left(\begin{array}{cc} 0,1893 \\ 0,3787\end{array}\right).\]

In this way we get the values $\pi^k_j=\pi(k, \overline{j})$, $j,k=1,2$, where $\overline{j}$ is the cylinder of size $1$ on $\{1,2\}^\mathbb{N}$ with first symbol $j$.

Note that as we get explicitly the Jacobian of $\pi$ one can obtain (via the use of (\ref{Op})) the probability of any cylinder.

In this way we get information on the  Gibbs plan $\pi$  we were looking for. It is easy to see that the above arguments can be applied in the same way for general matrices $A^1$ and $A^2$.

\bigskip

\begin{lemma} \label{L1a}
If $b$ is a normalized Lipchitz potential and $\pi\in \Pi(\cdot,\sigma)$, then
\[0\leq  - \int_{X\times \Omega} \log(J_\pi) \, d\pi \leq - \int_{X\times \Omega} b \, d\pi,\]
with equality,  if and only if, $b=\log(J_\pi)$.

Furthermore,
\[- \int_{X\times \Omega} \log(J_\pi) \, d\pi =\inf_{b\,normalized} - \int_{X\times \Omega} b \, d\pi.\]
\end{lemma}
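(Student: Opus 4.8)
The plan is to recognize this as the Gibbs-variational inequality in disguise: the quantity $-\int \log(J_\pi)\,d\pi$ is the entropy of $\pi$, and the statement says that among all normalized potentials $b$, the function $\log(J_\pi)$ is the one that minimizes $-\int b\,d\pi$, with the minimum value being exactly the entropy. First I would rewrite the difference as a single integral:
\[
-\int_{X\times\Omega} b\,d\pi + \int_{X\times\Omega}\log(J_\pi)\,d\pi = \int_{X\times\Omega}\bigl(\log(J_\pi) - b\bigr)\,d\pi = -\int_{X\times\Omega}\log\!\left(\frac{e^{b}}{J_\pi}\right)d\pi,
\]
so the claimed inequality $-\int\log(J_\pi)\,d\pi \le -\int b\,d\pi$ is equivalent to $\int \log(e^{b}/J_\pi)\,d\pi \le 0$. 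This is a Jensen-type estimate, and the natural tool is $\log t \le t-1$ for $t>0$, with equality iff $t=1$.

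The key step is then to show $\int_{X\times\Omega}\bigl(e^{b}/J_\pi - 1\bigr)\,d\pi \le 0$, i.e. $\int_{X\times\Omega} e^{b}/J_\pi\,d\pi \le 1$. Here is where I would invoke Lemma \ref{L01}: applying it with the test function $w(x,y) = e^{b(x,y)}/J_\pi(x,y)$ (which is continuous when $b$ is Lipschitz and $J_\pi$ is, say, the Gibbs Jacobian; in general one approximates, but $w$ need only be integrable and the identity in Lemma \ref{L01} extends from $C(X\times\Omega)$ to bounded measurable $w$ by density), we get
\[
\int_{X\times\Omega} \frac{e^{b(z,y)}}{J_\pi(z,y)}\,d\pi = \int_{X\times\Omega}\sum_{x}\sum_{a} J_\pi(x,ay)\,\frac{e^{b(x,ay)}}{J_\pi(x,ay)}\,d\pi = \int_{X\times\Omega}\sum_{x}\sum_{a} e^{b(x,ay)}\,d\pi.
\]
Since $b$ is normalized, $\sum_x\sum_a e^{b(x,ay)} = 1$ for every $y$, so the right side equals $\int 1\,d\pi = 1$. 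Combining, $\int\log(e^b/J_\pi)\,d\pi \le \int(e^b/J_\pi - 1)\,d\pi = 1 - 1 = 0$, which gives the middle inequality. For the left inequality $0 \le -\int\log(J_\pi)\,d\pi$, take $b = \log(J_\pi)$; but $\log(J_\pi)$ may not be Lipschitz, so instead I would observe directly that $\int\log(J_\pi)\,d\pi \le 0$ follows from the same computation applied with $w = 1/J_\pi$ (using $\sum_x\sum_a J_\pi(x,ay)=1$), giving $\int 1/J_\pi\,d\pi \le$ ... actually cleaner: apply Lemma \ref{L01} with $w\equiv 1$ and Jensen's inequality $\int -\log(J_\pi)\,d\pi \ge -\log\int J_\pi^{-1}\cdot(\text{normalization})$ — but the crispest route is: since $\sum_x\sum_a J_\pi(x,ay) = 1$, concavity of $\log$ gives for each $y$ that $\sum_{x,a}J_\pi(x,ay)\bigl(-\log J_\pi(x,ay)\bigr) \ge 0$, and integrating $d\pi(z,y)$ together with Lemma \ref{L01} (with $w = -\log J_\pi$) yields $-\int\log(J_\pi)\,d\pi \ge 0$.

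The equality case: $\log t = t-1$ iff $t=1$, so equality in the middle inequality forces $e^{b}/J_\pi = 1$ $\pi$-a.e., i.e. $b = \log(J_\pi)$ $\pi$-a.e.; conversely if $b = \log(J_\pi)$ the difference integral is obviously zero. The final "furthermore" clause is then immediate: the infimum over normalized $b$ of $-\int b\,d\pi$ is bounded below by $-\int\log(J_\pi)\,d\pi$ by the inequality just proved, and this bound is attained (in the limit, or exactly if $J_\pi$ is regular enough that $\log J_\pi$ is admissible as a normalized potential — one approximates $\log J_\pi$ in $L^1(\pi)$ by normalized Lipschitz potentials to conclude the infimum is exactly this value). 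I expect the main obstacle to be the regularity/approximation issue: $J_\pi$ for a general $\pi\in\Pi(\cdot,\sigma)$ is only measurable, so $\log J_\pi$ need not be a legitimate "normalized Lipschitz potential," and making the test function $e^b/J_\pi$ legitimate in Lemma \ref{L01} (stated for $w\in C(X,\Omega)$) requires first extending that lemma to bounded — or suitably integrable — measurable $w$ by a monotone-class / density argument, which I would relegate to the appendix alongside the other technical results.
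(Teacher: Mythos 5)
Your treatment of the two inequalities and of the equality case is essentially correct and runs parallel to the paper's own argument: the paper applies Jensen's inequality to the convex combination $\sum_{x,a}J_\pi(x,ay)u(x,ay)$ with $u=e^{b}/J_\pi$ and then uses Lemma \ref{L01} to pass from $\int\sum_{x,a}J_\pi\log u\,d\pi$ to $\int\log u\,d\pi$, which is the same mechanism as your $\log t\le t-1$ computation combined with Lemma \ref{L01} applied to $w=e^{b}/J_\pi$ (and the paper, too, applies Lemma \ref{L01} to a merely measurable integrand, so the monotone-class/truncation extension you flag is needed in both versions). Your equality discussion via strictness of $\log t\le t-1$ gives $b=\log J_\pi$ $\pi$-a.e., which is the content of the statement; the paper instead upgrades this through Proposition \ref{Gi} and Lemma \ref{L2} to identify $\pi$ as the Gibbs plan of $b$, but that is a stylistic difference, not a defect.

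The genuine gap is in the \emph{furthermore} clause. There the entire content is the existence of normalized Lipschitz potentials $b$ with $-\int b\,d\pi$ arbitrarily close to $-\int\log(J_\pi)\,d\pi$, and your proposal reduces this to the sentence ``one approximates $\log J_\pi$ in $L^1(\pi)$ by normalized Lipschitz potentials,'' which is precisely what has to be proved and is not a routine density argument: $\log J_\pi$ is only measurable and is typically unbounded below (cylinders carrying zero $\pi$-mass relative to $\nu$), and the approximants must satisfy the exact pointwise constraint $\sum_{x}\sum_{a}e^{b(x,ay)}=1$ for every $y$, a constraint that generic smoothing or $L^1$-approximation destroys, while re-normalizing afterwards can shift the integral badly exactly where $J_\pi$ is small. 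The paper resolves this with a separate construction (Lemma \ref{Jacap}): it works first with the $n$-step Jacobians $J_\pi^n$, which are constant on cylinders and hence Lipschitz, builds normalized $b_\epsilon$ by explicitly redistributing an $\epsilon$ amount of weight onto the pairs $(x,a)$ where $\pi([x,ay_1\dots y_n])=0$ (and using the uniform potential on $\nu$-null cylinders), and then combines $\lim_{\epsilon\to0}\int b_\epsilon\,d\pi=\int\log J_\pi^n\,d\pi$ with the martingale $L^1(\pi)$ convergence $\log J_\pi^n\to\log J_\pi$. Some construction of this kind must be supplied for your proof of the infimum identity to be complete.
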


The proof of this result appears in the Appendix.

\medskip

\begin{definition}\label{entropia}
If $\pi \in \Pi(\cdot,\sigma)$ we define the entropy of $\pi$ by
\[H(\pi) = -\int_{X\times \Omega} \log (J_\pi)\, d\pi =\inf_{b \, \text{normalized}} -\int_{X\times \Omega} b \, d\pi 
 .\]
\end{definition}
The functions $\log(J_\pi^{n})$ converge to $\log(J_\pi)$ in $L^{1}(X\times\Omega,\mathcal{B}(X\times\Omega),\pi)$ and we can compute the entropy from the limit
\[H(\pi) = -\lim_{n\to\infty}\int_{X\times \Omega} \log (J_\pi^{n})\, d\pi.\]
In the case $X$ has just one point the above definition matches the usual one for the Kolmogorov entropy (see \cite{Lop} and \cite{PP}).
If $X$ has $\#X$ elements and $\Omega=\{1,...,d\}^{\mathbb{N}}$, then $c(x,y) = -\log(d(\#X))$ is a  normalized cost, therefore
\[0 \leq H(\pi) \leq \log(d) + \log(\#X).\]

\begin{definition}\label{def:pressure}
The pressure of a Lipschitz continuous  cost (potential) $c$ is defined by
\[P(c) = \sup_{\pi \in \Pi(\cdot,\sigma)} \left(\int_{X\times\Omega} c\, d\pi + H(\pi)\right).\]

A plan $\pi\in \Pi(\cdot,\sigma) $ which realizes the supremum is called {\bf an equilibrium plan for $c$}.
\end{definition}

\begin{theorem}[Variational Principle over $\Pi(\cdot,\sigma)$]\label{gibbs-equilibrium}

 Let us fix a Lipschitz cost $c$. Then,
$P(c) =\log(\lambda_c)$, where $\lambda_c$ is the main eigenvalue of $L_c$. The equilibrium plan for $c$ is { unique} and given by the Gibbs plan for $\overline{c}:=c +\log(h_c) - \log(h_c\circ\sigma) - \log(\lambda_c)$, where $h_c$ is the eigenfunction associated to $\lambda_c$.

\end{theorem}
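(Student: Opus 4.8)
The plan is to reduce to the case of a normalized potential and then combine the entropy estimate of Lemma~\ref{L1a} with the identification of the Jacobian of a Gibbs plan in Lemma~\ref{L2}, following the scheme of \cite{PP}, Chapter~3. First I would fix $\lambda_c>0$ and the positive Lipschitz eigenfunction $h_c$ of Proposition~\ref{RPF} and set $\overline c := c + \log h_c - \log(h_c\circ\sigma) - \log\lambda_c$, a normalized Lipschitz cost. For any $\pi\in\Pi(\cdot,\sigma)$ with $y$-marginal $\nu$, the measure $\nu$ is $\sigma$-invariant, so $\int(\log h_c-\log h_c\circ\sigma)\,d\pi=0$ and hence
\[ \int_{X\times\Omega}\overline c\,d\pi + H(\pi) = \int_{X\times\Omega} c\,d\pi + H(\pi) - \log\lambda_c . \]
Taking the supremum over $\Pi(\cdot,\sigma)$ yields $P(\overline c)=P(c)-\log\lambda_c$, and $\pi$ realizes one supremum exactly when it realizes the other. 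Thus it suffices to prove that for every normalized Lipschitz cost $b$ one has $P(b)=0$ and the unique equilibrium plan for $b$ is the Gibbs plan $\pi_b$; with $b=\overline c$ this is the theorem (and $P(c)=\log\lambda_c$ then follows).

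Next I would establish both inequalities. For every $\pi\in\Pi(\cdot,\sigma)$, Definition~\ref{entropia} and Lemma~\ref{L1a} applied to the normalized potential $b$ give $H(\pi)=-\int\log J_\pi\,d\pi\le -\int b\,d\pi$, i.e.\ $\int b\,d\pi+H(\pi)\le 0$, so $P(b)\le 0$. Conversely, by Lemma~\ref{L2} the Gibbs plan satisfies $J_{\pi_b}=e^b$, hence $H(\pi_b)=-\int\log J_{\pi_b}\,d\pi_b=-\int b\,d\pi_b$ and $\int b\,d\pi_b+H(\pi_b)=0$. Therefore $P(b)=0$ and $\pi_b$ is an equilibrium plan.

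For uniqueness, I would take an arbitrary equilibrium plan $\pi\in\Pi(\cdot,\sigma)$ for $b$, so that $\int b\,d\pi + H(\pi)=0$, i.e.\ $-\int\log J_\pi\,d\pi=-\int b\,d\pi$. This is precisely the equality case of Lemma~\ref{L1a}, which forces $J_\pi=e^b$ $\pi$-almost everywhere. I would then check that $\pi$ is a fixed point of $\hat L_b^*$: for $u\in C(X\times\Omega)$,
\[ \hat L_b^*(\pi)(u)=\int_{X\times\Omega}\Big(\sum_x\sum_a e^{b(x,ay)}u(x,ay)\Big)\,d\pi=\int_{X\times\Omega}\Big(\sum_x\sum_a J_\pi(x,ay)u(x,ay)\Big)\,d\pi=\int_{X\times\Omega}u\,d\pi, \]
the last equality being Lemma~\ref{L01}. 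Since by Proposition~\ref{Gi} the operator $\hat L_b^*$ has a unique fixed point, this gives $\pi=\pi_b$.

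The hard part will be justifying the middle equality in the last display: from $J_\pi=e^b$ $\pi$-a.e.\ one must pass to $J_\pi(x,ay)=e^{b(x,ay)}$ for $\nu$-a.e.\ $y$ and every $x\in X$, $a\in\{1,\dots,d\}$, in order to both perform the substitution inside $\int(\cdot)\,d\pi(z,y)$ and apply Lemma~\ref{L01} to the substituted integrand. Because $X$ is finite this reduces, for each $x$, to showing that the $\nu$-null set $\{w:J_\pi(x,w)\neq e^{b(x,w)}\}$ has $\nu$-null preimage under every inverse branch $y\mapsto ay$ of the shift, which is the standard absolute-continuity property of the Gibbs measure $\nu=\nu_b$ under inverse branches of $\sigma$ together with the $\sigma$-invariance of $\nu$. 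I expect this measure-theoretic point to be the only one needing real care; the rest is a direct transcription of the classical Ruelle--Perron--Frobenius and variational-principle arguments.
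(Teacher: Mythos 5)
Your proposal is correct and follows essentially the same route as the paper: reduce to the normalized cost $\overline c$, get $P\leq 0$ from Lemma~\ref{L1a}, get the Gibbs plan as an equilibrium plan from Lemma~\ref{L2}, and obtain uniqueness from the equality case of Lemma~\ref{L1a} together with Lemma~\ref{L01} and the uniqueness in Proposition~\ref{Gi}. The only point to adjust is the "hard part" you flag at the end: the branch-wise statement $\log J_\pi(x,ay)=b(x,ay)$ for all $x\in X$, $a\in\{1,\dots,d\}$ and a.e.\ $y$ is exactly what the equality case in the Jensen argument proving Lemma~\ref{L1a} yields (the integrand there is a function of $y$ alone, and strict concavity of $\log$ forces $u(x,ay)\equiv 1$ for all $x,a$ at a.e.\ $y$), so no separate absolute-continuity argument is needed; your proposed justification via absolute continuity of $\nu_b$ under inverse branches would in any case be circular, since at that stage the $y$-marginal $\nu$ of the equilibrium plan is not yet known to coincide with $\nu_b$.
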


We refer the reader to the appendix for a proof.

\medskip

Now we present some properties of the entropy and pressure, as well as an example.

 As $X$ has a finite number of points we can consider the usual (non-dynamical) entropy of a probability measure $\mu \in P(X)$ given by $h(\mu) = -\sum_{x\in X}\mu(x)\log(\mu(x))$. For each $\sigma$-invariant probability $\nu \in P(\Omega)$ the Kolmogorov entropy is denoted by $h(\nu)$.

\begin{proposition}\label{entropia-prod} Given $\pi \in \Pi(\cdot,\sigma)$,  if the $x$-marginal of $\pi$ is a probability measure $\mu$ and the $y$-marginal of $\pi$ is an invariant measure $\nu$, then
\[H(\pi) \leq h(\mu) + h(\nu) .\]
If $\pi = \mu\times\nu$, then $H(\pi) = h(\mu)+h(\nu)$.
\end{proposition}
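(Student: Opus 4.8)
The plan is to work with the finite-level description of entropy, $H(\pi)=-\lim_{n\to\infty}\int_{X\times\Omega}\log(J_\pi^n)\,d\pi$, and to rewrite each term $-\int\log(J_\pi^n)\,d\pi$ as a combination of Shannon entropies of finite partitions, on which the two marginal constraints can be read off directly.

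Fix $n$ and write $p(x,\bar w)=\pi([x,w_0\dots w_n])$ for $\bar w=(w_0,\dots,w_n)$. Splitting the logarithm in $J_\pi^n(x,w)=p(x,w_0\dots w_n)/\nu([w_1\dots w_n])$ gives
\[
-\int_{X\times\Omega}\log(J_\pi^n)\,d\pi \;=\; H_\pi(\alpha_n)\;+\;\sum_{x,\bar w}p(x,\bar w)\,\log\nu([w_1\dots w_n]),
\]
where $\alpha_n$ is the partition of $X\times\Omega$ into the cylinders $[x,w_0\dots w_n]$ and $H_\pi(\alpha_n)=-\sum p\log p$ is its Shannon entropy. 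In the second sum, summing first over $x$ and using that the $y$-marginal of $\pi$ is $\nu$ replaces $p(x,\bar w)$ by $\nu([w_0\dots w_n])$; then grouping by $(w_1,\dots,w_n)$ and using $\sigma$-invariance of $\nu$ (so that $\sum_{w_0}\nu([w_0w_1\dots w_n])=\nu([w_1\dots w_n])$) collapses it to $-H_\nu(n)$, where $H_\nu(k)$ denotes the Shannon entropy of the partition of $\Omega$ into length-$k$ cylinders. Hence $-\int\log(J_\pi^n)\,d\pi=H_\pi(\alpha_n)-H_\nu(n)$.

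Next I would estimate $H_\pi(\alpha_n)$. The partition $\alpha_n$ is the common refinement of $\beta=\{\{x\}\times\Omega:x\in X\}$ and the partition $\gamma_{n+1}$ of $X\times\Omega$ by length-$(n+1)$ cylinders in the $\Omega$ variable, so subadditivity of Shannon entropy gives $H_\pi(\alpha_n)\le H_\pi(\beta)+H_\pi(\gamma_{n+1})$; since the $x$-marginal is $\mu$ and the $y$-marginal is $\nu$, the right-hand side is exactly $h(\mu)+H_\nu(n+1)$. Therefore
\[
-\int_{X\times\Omega}\log(J_\pi^n)\,d\pi \;\le\; h(\mu)+\big(H_\nu(n+1)-H_\nu(n)\big).
\]
Letting $n\to\infty$, the left-hand side converges to $H(\pi)$, while $H_\nu(n+1)-H_\nu(n)$ decreases to the Kolmogorov--Sinai entropy $h(\nu)$ of the generating partition by the zeroth coordinate; this yields $H(\pi)\le h(\mu)+h(\nu)$. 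For the equality case $\pi=\mu\times\nu$ one has $p(x,\bar w)=\mu(x)\,\nu([w_0\dots w_n])$, so $\beta$ and $\gamma_{n+1}$ are $\pi$-independent and the subadditivity step is an equality; the same computation then gives $-\int\log(J_\pi^n)\,d\pi=h(\mu)+\big(H_\nu(n+1)-H_\nu(n)\big)$ for every $n$, and passing to the limit gives $H(\mu\times\nu)=h(\mu)+h(\nu)$. The step most likely to require care is the bookkeeping in the $\nu$-term: one must check that after summing over $x$ and over $w_0$, invariance makes it telescope to exactly $-H_\nu(n)$, so that it cancels the $\Omega$-cylinder part of $H_\pi(\alpha_n)$ up to the single increment $H_\nu(n+1)-H_\nu(n)$; the remaining facts, including $H_\nu(n+1)-H_\nu(n)\to h(\nu)$ and the convergence $-\int\log(J_\pi^n)\,d\pi\to H(\pi)$ (already recorded after Definition~\ref{entropia}), are standard, so I do not expect a substantial obstacle.
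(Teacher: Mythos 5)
Your proposal is correct, but it takes a genuinely different route from the paper for the inequality. You reduce everything to the finite-level quantities $-\int\log(J_\pi^n)\,d\pi$, rewrite them (using the $y$-marginal and $\sigma$-invariance exactly as you describe) as $H_\pi(\alpha_n)-H_\nu(n)$, and then invoke subadditivity of Shannon entropy for the refinement $\alpha_n=\beta\vee\gamma_{n+1}$ plus the standard fact that the conditional entropies $H_\nu(n+1)-H_\nu(n)$ decrease to $h(\nu)$; all of these steps check out, including the bookkeeping you flag (the only conventions needed are $0\cdot\log 0=0$ and $\pi([x,w_0\dots w_n])\le\nu([w_1\dots w_n])$, which holds by invariance). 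The paper instead proves the inequality via the variational formula of Definition \ref{entropia}: it quotes the classical characterization $h(\nu)=\inf\{-\int g\,d\nu\}$ over normalized Lipschitz $g$, takes $g$ nearly optimal, and observes that $c(x,y)=g(y)+\log\mu(x)$ is a normalized potential, so $H(\pi)\le-\int c\,d\pi\le h(\mu)+h(\nu)+\epsilon$. Your approach avoids quoting that classical variational characterization of Kolmogorov entropy (replacing it by the elementary conditional-entropy limit) and gives a bound at each finite level $n$; the paper's argument is shorter once that classical fact is granted and stays entirely within the normalized-potential framework used elsewhere in the paper. For the equality case the two arguments essentially coincide: the paper computes $J_\pi^n$ explicitly for the product, while you observe that independence of $\beta$ and $\gamma_{n+1}$ turns the subadditivity step into an equality, which is the same computation.
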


 For a proof see the Appendix.

 \medskip

\noindent
\textbf{Example 2:} If $X=\{1,2\}$ and $\Omega=\{1,2\}^{\mathbb{N}}$, then:\newline
1. Consider the plan $\pi$ defined from
\[\pi([x,y_0y_1...y_n]) = \left\{\begin{array}{cc}\frac{1}{2^{n+1}} & \text{if} \,y_0=x\\ 0 & \text{if}\,
y_0\neq x\end{array}\right.\]
The $x$-marginal of $\pi$ is $\mu=(\frac 1 2,\frac 1 2)$ and the $y$-marginal of $\pi$ is the Bernoulli measure $\nu$ with uniform distribution. Then, $h(\mu)=h(\nu)=\log(2)$ and $H(\pi)=\log(2)$. Indeed, we have
\begin{eqnarray*}
H(\pi)&=&\lim_{n\to\infty}-\int_{X\times \Omega} \log\left(\frac{\pi([x,y_0...y_n])}{\nu([y_1...y_n])}\right)d\pi(x,y)\\
&=&\lim_{n\to\infty}-\sum_x\sum_{y_0,...,y_n} \log\left(\frac{\pi([x,y_0...y_n])}{\nu([y_1...y_n])}\right)\pi([x,y_0...y_n])\\
&=& \lim_{n\to\infty}-\sum_{y_0...y_n} \log\left(\frac{\pi([y_0,y_0...y_n])}{\nu([y_1...y_n])}\right)\pi([y_0,y_0...y_n])\\
&=& \lim_{n\to\infty}-\sum_{y_0...y_n} \log\left(\frac{1}{2}\right)\frac{1}{2^{n+1}}=\log(2)
\end{eqnarray*}

2. Consider any plan $\pi =\mu\times \nu$ where $\mu=(\frac 1 2,\frac 1 2)$ and $\nu$ has support in a periodic orbit. In this case $h(\mu)=\log(2)$ while $h(\nu)=0$. Once more we get $H(\pi)=\log(2)$ because $\pi$ is a product plan.  \newline
\medskip

3. Let $\pi$ be the plan with two atoms $\pi = \frac{1}{2}\delta_{(1,\overline{12})}+\frac{1}{2}\delta_{(2,\overline{21})}$, where $\overline{y_0y_1}=(y_0,y_1,y_0,y_1,y_0,...)$. The $x$-marginal of $\pi$ is again $\mu=(\frac 1 2,\frac 1 2)$ while the $y$-marginal is the invariant measure with support equal to the periodic orbit $\overline{12}$. In the present case $H(\pi)=0$. Indeed,
\[\pi([x,y_0...y_n]) = \left\{\begin{array}{cc} \frac 1 2 & \text{if}\, x=y_0=y_2=y_4=... \,\text{and}\, x\neq y_1=y_3=...\\ 0 & \text{else}\end{array}\right. .\]
Then,
\begin{eqnarray*}
H(\pi) &=&  \lim_{n\to\infty}-\sum_x\sum_{y_0...y_n} \log\left(\frac{\pi([x,y_0...y_n])}{\nu([y_1...y_n])}\right)\pi([x,y_0...y_n])\\
&=&\lim_{n\to\infty}-\sum_{x} \log\left(\frac{\frac 1 2}{\frac 1 2}\right)\frac 1 2 =0.
\end{eqnarray*}

\begin{proposition} The entropy is a concave and upper semi-continuous function.
\end{proposition}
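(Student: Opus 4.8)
The plan is to use the variational characterization of entropy from Definition \ref{entropia}, namely
\[
H(\pi) = \inf_{b \text{ normalized}} \left(-\int_{X\times\Omega} b \, d\pi\right).
\]
For each fixed normalized Lipschitz potential $b$, the map $\pi \mapsto -\int_{X\times\Omega} b\, d\pi$ is affine (indeed linear) in $\pi$. Since $H$ is thus an infimum of a family of affine functionals of $\pi$, it is automatically concave on the convex set $\Pi(\cdot,\sigma)$: given $\pi_1,\pi_2 \in \Pi(\cdot,\sigma)$ and $t\in[0,1]$, for every normalized $b$ we have $-\int b\, d(t\pi_1 + (1-t)\pi_2) = t(-\int b\, d\pi_1) + (1-t)(-\int b\, d\pi_2) \geq t H(\pi_1) + (1-t) H(\pi_2)$, and taking the infimum over $b$ on the left gives $H(t\pi_1 + (1-t)\pi_2) \geq t H(\pi_1) + (1-t) H(\pi_2)$. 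One should first check that $\Pi(\cdot,\sigma)$ is convex, which is immediate from the linearity of the two defining conditions in \eqref{marginal} (only the second condition is relevant here, since $\mu$ is not fixed).

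**Upper semicontinuity.** The topology on $P(X\times\Omega)$ is the weak-$*$ topology, in which $\Pi(\cdot,\sigma)$ is closed (hence compact, by the compactness of $P(X\times\Omega)$). For each fixed normalized Lipschitz $b$, the potential $b$ is continuous on the compact space $X\times\Omega$, so $\pi \mapsto -\int_{X\times\Omega} b\, d\pi$ is weak-$*$ continuous. Therefore $H = \inf_b(-\int b\, d\pi)$ is an infimum of a family of continuous functions, and an infimum of continuous functions is upper semicontinuous. This completes both claims.

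**Main obstacle.** The only delicate point is making sure the variational formula $H(\pi) = \inf_{b}(-\int b\, d\pi)$ with the infimum over \emph{Lipschitz} normalized potentials (as stated in Lemma \ref{L1a} and Definition \ref{entropia}) is genuinely valid for every $\pi \in \Pi(\cdot,\sigma)$, including plans whose Jacobian $J_\pi$ is merely measurable and not Lipschitz — this is exactly what Lemma \ref{L1a} asserts and we are entitled to assume it. Given that lemma, the argument above is essentially formal: concavity and upper semicontinuity are inherited from the fact that $H$ is an infimum of an affine, weak-$*$ continuous family. No approximation of $J_\pi$ by Lipschitz functions is needed here, since the needed infimum representation is already in hand.
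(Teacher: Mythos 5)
Your argument is correct and is exactly the paper's route: the paper also deduces both properties directly from the characterization $H(\pi)=\inf_{b\,\text{normalized}}\bigl(-\int_{X\times\Omega} b\,d\pi\bigr)$, since an infimum of affine, weak-$*$ continuous functionals is concave and upper semicontinuous (the paper merely states this and refers to \cite{LMMS} for the details you have written out).
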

\begin{proof}
Both properties are  consequences of the definition $$H(\pi)=\inf_{b\,normalized} -\int_{X\times \Omega} b \,d\pi.$$ The proof follows the same arguments used in \cite{LMMS}.
\end{proof}


\begin{proposition}\label{pressure-properties}
The pressure has the following properties:

(a) 
if $c_1 \geq c_2$, then $P(c_1) \geq P(c_2)$.

(b)
$P(c+a)=P(c)+a$ if $a \in \mathbb{R}$.

(c) The pressure is convex .

(d) $|P(c_1)-P(c_2)| \leq \|c_1-c_2\|$.

\end{proposition}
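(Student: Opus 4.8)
The plan is to obtain all four items directly from the variational definition
\[
P(c) \;=\; \sup_{\pi \in \Pi(\cdot,\sigma)} \Big( \int_{X\times\Omega} c\, d\pi \;+\; H(\pi) \Big),
\]
exploiting only two features: each $\pi$ is a probability measure, so $\pi(X\times\Omega)=1$ and $c\mapsto\int c\,d\pi$ is a positive affine functional; and the functional $H$ does not depend on $c$ and satisfies $0\le H(\pi)\le \log d + \log(\#X)$ (established right after Definition \ref{entropia}), so the supremum is over a nonempty set of plans and every quantity below is finite, ruling out any $\infty-\infty$ issue. This finiteness is the only point that needs a word of care; there is no real obstacle.

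For (a), if $c_1\ge c_2$ pointwise then $\int c_1\,d\pi\ge\int c_2\,d\pi$ for every $\pi$, hence $\int c_1\,d\pi + H(\pi)\ge \int c_2\,d\pi + H(\pi)$; taking the supremum over $\pi\in\Pi(\cdot,\sigma)$ gives $P(c_1)\ge P(c_2)$. For (b), for $a\in\mathbb{R}$ and any $\pi\in\Pi(\cdot,\sigma)$ one has $\int (c+a)\,d\pi + H(\pi) = \int c\,d\pi + H(\pi) + a$ since $\pi(X\times\Omega)=1$; taking the supremum yields $P(c+a)=P(c)+a$.

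For (c), fix $c_1,c_2$ and $t\in[0,1]$. Using $H(\pi)=tH(\pi)+(1-t)H(\pi)$, for each $\pi$ we get
\[
\int_{X\times\Omega}\!\big(tc_1+(1-t)c_2\big)\,d\pi + H(\pi)
= t\Big(\int c_1\,d\pi + H(\pi)\Big) + (1-t)\Big(\int c_2\,d\pi + H(\pi)\Big)
\le tP(c_1)+(1-t)P(c_2),
\]
and taking the supremum over $\pi$ gives $P(tc_1+(1-t)c_2)\le tP(c_1)+(1-t)P(c_2)$; equivalently, $P$ is a pointwise supremum of the affine maps $c\mapsto\int c\,d\pi + H(\pi)$, hence convex. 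For (d), writing $\|\cdot\|$ for the supremum norm, we have $c_1\le c_2+\|c_1-c_2\|$ and $c_2\le c_1+\|c_1-c_2\|$ pointwise, so by (a) and (b), $P(c_1)\le P(c_2)+\|c_1-c_2\|$ and $P(c_2)\le P(c_1)+\|c_1-c_2\|$, which gives $|P(c_1)-P(c_2)|\le\|c_1-c_2\|$.
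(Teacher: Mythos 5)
Your proof is correct. It rests on the same elementary idea as the paper's, namely reading everything off the variational definition, but the execution differs slightly in (c) and (d): the paper chooses an $\epsilon$-optimal plan $\pi_\epsilon$ for the combined (resp.\ first) potential and then estimates, whereas you avoid the $\epsilon$-approximation altogether, for (c) by observing that $P$ is a pointwise supremum of the affine functionals $c\mapsto\int c\,d\pi+H(\pi)$ (equivalently, bounding each term by $P(c_1)$, $P(c_2)$ before taking the supremum), and for (d) by deducing the Lipschitz estimate formally from monotonicity (a) and translation invariance (b) via $c_1\le c_2+\|c_1-c_2\|$. The two routes prove the same statements with the same amount of work; yours is marginally cleaner in that it never needs near-maximizers, while the paper's $\pi_\epsilon$ argument for (d) shows directly that the bound comes from integrating $|c_1-c_2|$ against a single plan. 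Your remark that the boundedness of $H$ (and of Lipschitz costs on the compact space $X\times\Omega$) rules out any $\infty-\infty$ issue is a correct and harmless addition not made explicit in the paper.
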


The proof is in the Appendix.

\section{Kantorovich duality for Thermodynamic Formalism over $\Pi(\mu,\sigma) $ } \label{Ka}

In the last section $\mu$ was not fixed. In this section the probability $\mu$ on $X$ is fixed.

We define the $\mu$-pressure of $c$ by

\begin{equation}\label{mu-pressure}
 P_{\mu}(c)=\sup_{\pi \in \Pi(\mu,\sigma)}  \int_{X\times \Omega} c \, d\pi + H(\pi).
\end{equation}

Note that, $ P_{\mu}(c)\leq  P(c)$.



By compactness, there exists a  plan $\tilde\pi_c\in\Pi(\mu,\sigma)$ which attains the supremum
$$ \sup_{\pi \in \Pi(\mu,\sigma)}\int_{X\times \Omega}\, c(x,y) d \pi + H(\pi).$$

Remember that the Lipchitz functions are $C^0$ dense in the set of continuous functions.

The results that we will prove in this section can be resumed in the following.

\begin{theorem}[Variational Principle for $\Pi(\mu,\sigma)$]\label{varprinc}
\begin{equation}\inf_{\varphi:P(c-\varphi)= 0} \int_X \varphi(x) \, d\mu = \sup_{\pi \in \Pi(\mu,\sigma)} \int_{X\times \Omega}  c  \, d\pi\, +\, H(\pi). \label{varpri}
\end{equation}
The above infimum and supremum are attained in unique elements $\tilde \varphi$ and $\tilde \pi$. The maximizer $\tilde \pi$ is the Gibbs plan for $c-\tilde \varphi$.
\end{theorem}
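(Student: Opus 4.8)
First I would prove the "easy" direction, $\sup_{\pi\in\Pi(\mu,\sigma)}\big(\int c\,d\pi+H(\pi)\big)\le\inf_{\varphi:P(c-\varphi)=0}\int\varphi\,d\mu$. For any $\varphi\in C(X)$ with $P(c-\varphi)=0$ and any $\pi\in\Pi(\mu,\sigma)$, write $\int c\,d\pi+H(\pi)=\int(c-\varphi)\,d\pi+H(\pi)+\int\varphi\,d\pi$. Since $\pi\in\Pi(\cdot,\sigma)$, the first two terms are bounded above by $P(c-\varphi)=0$ by Definition \ref{def:pressure}, and $\int\varphi\,d\pi=\int\varphi\,d\mu$ because the $x$-marginal of $\pi$ is $\mu$. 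Taking sup over $\pi$ and then inf over $\varphi$ gives the inequality. (One should first note the inf is over a nonempty set: by Proposition \ref{pressure-properties}(b), subtracting the constant $P(c)$ from $c$ — more precisely taking $\varphi$ constant equal to $P(c)$ — is a legal choice, using that constants lie in $C(X)$; so the constraint set is nonempty and the inf is $\le P(c)<\infty$.)

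The harder direction is $\inf_{\varphi:P(c-\varphi)=0}\int\varphi\,d\mu\le\sup_{\pi\in\Pi(\mu,\sigma)}\big(\int c\,d\pi+H(\pi)\big)$, and this is where a minimax/Hahn–Banach argument is needed. I would consider the concave functional $\pi\mapsto\int c\,d\pi+H(\pi)$ on the convex set $\Pi(\cdot,\sigma)$ — concave and upper semi-continuous by the propositions in Section \ref{EP} — and the linear constraint "$x$-marginal $=\mu$", i.e. $\int f\,d\pi=\int f\,d\mu$ for all $f\in C(X)$. The natural route is a Sion-type minimax theorem or the Fenchel–Rockafellar duality theorem applied to
\[
\sup_{\pi\in\Pi(\cdot,\sigma)}\ \inf_{\varphi\in C(X)}\ \Big(\int c\,d\pi+H(\pi)+\int\varphi\,d\mu-\int\varphi\,d\pi\Big),
\]
where the inner inf over $\varphi$ is $-\infty$ unless the $x$-marginal of $\pi$ is exactly $\mu$, in which case it equals $\int c\,d\pi+H(\pi)$; so the left side of the above equals $P_\mu(c)$. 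Swapping sup and inf — justified by upper semicontinuity of $H$, compactness of $\Pi(\cdot,\sigma)$ in the weak-$*$ topology, and concavity/convexity — the expression becomes $\inf_{\varphi\in C(X)}\big(P(c-\varphi)+\int\varphi\,d\mu\big)$, using Definition \ref{def:pressure} for $P(c-\varphi)$. Finally one reduces the inf over all $\varphi\in C(X)$ to the inf over $\varphi$ with $P(c-\varphi)=0$: given arbitrary $\varphi$, set $\varphi'=\varphi+P(c-\varphi)$; then $P(c-\varphi')=P(c-\varphi)-P(c-\varphi)=0$ by Proposition \ref{pressure-properties}(b), and $\int\varphi'\,d\mu=\int\varphi\,d\mu+P(c-\varphi)$, so the two infima agree. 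The main obstacle is rigorously justifying the sup–inf interchange — verifying the hypotheses of the minimax theorem, in particular that $\Pi(\cdot,\sigma)$ is weak-$*$ compact and that $H$ is upper semi-continuous there (both available from Section \ref{EP}), and that the payoff is affine in $\varphi$ and concave-usc in $\pi$.

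For attainment and uniqueness: existence of the maximizer $\tilde\pi$ follows from compactness of $\Pi(\mu,\sigma)$ together with upper semi-continuity of $\pi\mapsto\int c\,d\pi+H(\pi)$, as already remarked before the theorem statement. Uniqueness of $\tilde\pi$ follows from strict concavity of the entropy on the relevant set (or, more robustly, from the identification with a Gibbs plan below). To identify $\tilde\pi$ with the Gibbs plan for $c-\tilde\varphi$ and to get uniqueness of $\tilde\varphi$: at an optimal pair, all the inequalities in the easy direction become equalities, so $\int(c-\tilde\varphi)\,d\tilde\pi+H(\tilde\pi)=P(c-\tilde\varphi)=0$, meaning $\tilde\pi$ is an equilibrium plan for $c-\tilde\varphi$ in the sense of Definition \ref{def:pressure}; by Theorem \ref{gibbs-equilibrium} the equilibrium plan for a Lipschitz cost is unique and equals the Gibbs plan for the associated normalized potential. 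Since $c-\tilde\varphi$ is already normalized (its pressure is $0=\log\lambda$, so $\lambda=1$ and one may take $h\equiv 1$), $\tilde\pi=\pi_{c-\tilde\varphi}$. Uniqueness of $\tilde\varphi$ then comes from the fact that $\tilde\varphi$ is determined by $\tilde\pi$: normalization of $c-\tilde\varphi$ forces $\sum_{x}\sum_{\sigma(w)=y}e^{c(x,w)-\tilde\varphi(x)}=1$ for all $y$, and since $\tilde\varphi$ depends only on $x$ this system (over the $d$ values of $y_0$ fixing a tail) pins down $\tilde\varphi$ uniquely — or one argues directly that two distinct minimizers would, by the equality case in Lemma \ref{L1a}, force $\log J_{\tilde\pi}=c-\tilde\varphi_1=c-\tilde\varphi_2$, hence $\tilde\varphi_1=\tilde\varphi_2$ on $\mathrm{supp}(\mu)=X$.
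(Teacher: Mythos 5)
Your proof of the duality identity itself is a reasonable alternative route: the easy inequality is exactly right, and dualizing only the marginal constraint and invoking a Sion-type minimax theorem (compact convex $\Pi(\cdot,\sigma)$, entropy concave and upper semi-continuous, payoff affine in $\varphi$) is a genuinely different mechanism from the paper, which instead applies Fenchel--Rockafellar on $E=C(X\times\Omega)$ with the functionals $\Theta,\Xi$ and uses the characterization $H(\pi)=\inf_{b\ \text{normalized}}-\int b\,d\pi$ to compute the Legendre transforms. However, the attainment and uniqueness half of the theorem --- which is a substantial part of the statement --- has genuine gaps. You never prove that the infimum over $\{\varphi: P(c-\varphi)=0\}$ is attained: neither Sion nor Fenchel--Rockafellar gives attainment on the $\varphi$-side ($C(X)$ is not compact), and your identification of $\tilde\pi$ with an equilibrium plan, as well as both of your uniqueness arguments for $\tilde\varphi$, start ``at an optimal pair'' and hence presuppose exactly what is missing. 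The paper supplies this in Proposition \ref{uniqueminimizer} via a coercivity estimate $F(tv)\geq t\varepsilon+\xi$ on the unit sphere, which crucially uses $\mathrm{supp}(\mu)=X$, and then gets uniqueness of $\tilde\varphi$ from convexity plus analyticity of the pressure; some argument of this type must be supplied.

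The second gap is the claim that $P(c-\tilde\varphi)=0$ makes $c-\tilde\varphi$ normalized with $h\equiv 1$. Zero pressure only gives $\lambda=1$; the eigenfunction $h$ of $L_{c-\tilde\varphi}$ is in general non-constant, so $\sum_x\sum_{\sigma(w)=y}e^{c(x,w)-\tilde\varphi(x)}$ need not equal $1$. Consequently the equality case of Lemma \ref{L1a} (which applies only to normalized potentials) yields $\log J_{\tilde\pi}=c-\tilde\varphi_i+\log h_i-\log(h_i\circ\sigma)-\log\lambda_i$ and not $\log J_{\tilde\pi}=c-\tilde\varphi_i$, so your conclusion $\tilde\varphi_1=\tilde\varphi_2$ does not follow as written, and ``$\tilde\pi=\pi_{c-\tilde\varphi}$ because $c-\tilde\varphi$ is normalized'' should read that $\tilde\pi$ is the Gibbs plan of the normalization $\overline{c-\tilde\varphi}$, which is how Theorem \ref{gibbs-equilibrium} is actually invoked in Corollary \ref{corolmin}. (Your idea can be repaired, and in fact without analyticity: subtracting the two identities shows $\tilde\varphi_2-\tilde\varphi_1$, a function of $x$ alone, equals a $y$-coboundary, hence is constant, and evaluating the coboundary along a fixed point of $\sigma$ forces the constant to vanish --- but this correction, together with existence of the minimizer, must be written out.) Finally, the assertion that entropy is strictly concave is not available in the paper; your fallback via the uniqueness of the equilibrium plan in Theorem \ref{gibbs-equilibrium} is the right way to get uniqueness of $\tilde\pi$, but it again requires a minimizer $\tilde\varphi$ to exist.
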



The next corollary can be interpreted as the slackness condition on the present setting:

\begin{corollary}\label{corolmin}
Given $\varphi(x)$ such that $P(c-\varphi)=0$ and a plan $\pi_0 \in \Pi(\mu,\sigma)$, if $\pi_0$ is the Gibbs measure of $c-\varphi$, then $\pi_0$ attains the supremum and $\varphi$ attains the infimum in \eqref{varpri}.
\end{corollary}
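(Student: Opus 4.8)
The plan is to deduce Corollary~\ref{corolmin} directly from Theorem~\ref{varprinc} together with the basic inequality $P_\mu(c)\le P(c)$ and the translation property of pressure in Proposition~\ref{pressure-properties}(b). The key observation is the following chain of inequalities valid for \emph{any} $\varphi(x)$ with $P(c-\varphi)=0$ and \emph{any} $\pi\in\Pi(\mu,\sigma)$:
\[
\int_{X\times\Omega} c\,d\pi + H(\pi) \;=\; \int_{X\times\Omega}(c-\varphi)\,d\pi + H(\pi) + \int_X \varphi\,d\mu \;\le\; P(c-\varphi) + \int_X \varphi\,d\mu \;=\; \int_X \varphi\,d\mu,
\]
where the middle step uses the definition of the (unconstrained) pressure $P(c-\varphi)=\sup_{\pi\in\Pi(\cdot,\sigma)}\int (c-\varphi)\,d\pi+H(\pi)$ applied to the particular plan $\pi$ (which lies in $\Pi(\cdot,\sigma)$ since its $y$-marginal is $\sigma$-invariant), and the fact that the $x$-marginal of $\pi$ is $\mu$, so $\int \varphi(x)\,d\pi = \int_X\varphi\,d\mu$.

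Granting this, I would argue as follows. First, taking the supremum over $\pi\in\Pi(\mu,\sigma)$ on the left and the infimum over admissible $\varphi$ on the right of the displayed inequality reproves the ``$\le$'' direction of the variational principle; but Theorem~\ref{varprinc} asserts that equality holds, i.e. both sides equal the common value $V:=P_\mu(c)$. Now suppose $\varphi$ satisfies $P(c-\varphi)=0$ and $\pi_0\in\Pi(\mu,\sigma)$ is the Gibbs plan for $c-\varphi$. By Lemma~\ref{L2}, $J_{\pi_0}=e^{c-\varphi}$, hence $H(\pi_0) = -\int \log(J_{\pi_0})\,d\pi_0 = -\int (c-\varphi)\,d\pi_0$, which gives
\[
\int_{X\times\Omega} c\,d\pi_0 + H(\pi_0) \;=\; \int_{X\times\Omega}\varphi\,d\pi_0 \;=\; \int_X \varphi\,d\mu.
\]
Thus for this particular pair the displayed inequality is an equality. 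Combined with the two general inequalities $\int c\,d\pi_0+H(\pi_0)\le V$ and $\int_X\varphi\,d\mu\ge V$ (the latter being the ``$\ge$'' direction of Theorem~\ref{varprinc}), we are forced to conclude $\int c\,d\pi_0+H(\pi_0)=V = \int_X\varphi\,d\mu$, i.e. $\pi_0$ attains the supremum in \eqref{varpri} and $\varphi$ attains the infimum.

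The only subtle point — and the step I would check most carefully — is that $c-\varphi$ is a normalized cost precisely when $P(c-\varphi)=0$, so that ``the Gibbs plan for $c-\varphi$'' in the hypothesis makes sense and Lemma~\ref{L2} applies. Indeed, by Theorem~\ref{gibbs-equilibrium}, $P(c-\varphi)=\log\lambda_{c-\varphi}$, so $P(c-\varphi)=0$ forces $\lambda_{c-\varphi}=1$; this does not by itself make $c-\varphi$ normalized, but the normalized cost $\overline{c-\varphi}$ differs from $c-\varphi$ by a coboundary $\log h - \log(h\circ\sigma)$ which integrates to zero against any $\pi\in\Pi(\cdot,\sigma)$, so the identity $J_{\pi_0}=e^{\,\overline{c-\varphi}}=e^{\,c-\varphi}\cdot\frac{h\circ\sigma}{h}$ still yields $H(\pi_0)=-\int(c-\varphi)\,d\pi_0$ after the coboundary term cancels. (Alternatively, one can simply interpret the hypothesis as asserting $c-\varphi$ itself is normalized, which is the natural reading given that $P(c-\varphi)=0$.) Everything else is the routine bookkeeping of the two inequalities above, so no serious obstacle is expected.
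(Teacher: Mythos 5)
Your argument is correct and follows essentially the paper's own route: the paper likewise combines the identity $0=P(c-\varphi)=\int_{X\times\Omega}(c-\varphi)\,d\pi_0+H(\pi_0)$ (obtained there by citing Theorem \ref{gibbs-equilibrium}, i.e.\ that the Gibbs plan is the equilibrium plan, where you instead unroll this via Lemma \ref{L2} and the coboundary cancellation) with the marginal condition $\int\varphi\,d\pi_0=\int_X\varphi\,d\mu$ and the weak-duality inequality you display. Two inessential remarks: your appeal to Theorem \ref{varprinc} is redundant, since your first displayed inequality already yields both attainments directly, and the coboundary factor should be $h/(h\circ\sigma)$ rather than $(h\circ\sigma)/h$, which is immaterial because it integrates to zero against any plan in $\Pi(\cdot,\sigma)$.
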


\begin{proof}
We have
\begin{equation}\label{Pmu}
0=P(c-\varphi)=\sup_{\pi \in \Pi(\cdot,\sigma)} \int_{X\times \Omega}  (c - {\varphi}) \, d\pi\, +\, H(\pi)=\int_{X\times \Omega}  (c - {\varphi}) \, d\pi_0\, +\, H(\pi_0).
\end{equation}
Therefore
$$
\sup_{\pi \in \Pi(\mu,\sigma)} \int_{X\times \Omega}  c  \, d\pi\, +\, H(\pi) = \int \varphi d\mu\,
$$
and the supremum is attained in $\pi_0$.
\end{proof}

\begin{definition}
 Given a Lipschitz cost (potential) $c$ we define $\Phi_c$ as the set of all pairs of continuous functions $(\varphi,\psi)\in C(X)\times C(\Omega)$ which satisfy
\begin{equation}
\label{desigualdade phic}
\varphi(x)-\psi(y)+ (\psi \circ \sigma)(y)\geq  c(x,y)- b(x,y), \ \ \ \ \forall \, (x,y) \in X\times \Omega
\end{equation}
for some Lipschitz function $b$ with zero pressure. Following the classical terminology it is natural to call $\varphi(x)$ and $\psi(y)+ (\psi \circ \sigma)(y)$ of $c$-admissible pair.

\end{definition}

The theorem stated below is the version for positive temperature of the main theorem in \cite{LM} (which in some sense corresponds to zero temperature).



\begin{theorem}\label{dualidade} 

Given a Lipschitz cost $c$ we have
\begin{equation}\label{Fenchel-Rockafellar equation}
\,\,\,\,\
 \,\inf_{(\varphi,\psi)\in\Phi_c}  \int_X \varphi \, d\mu =  \sup_{\pi \in \Pi(\mu,\sigma)}\int_{X\times \Omega}\, c(x,y)\, d \pi + H(\pi).
\end{equation}
The supremum in ($\ref{Fenchel-Rockafellar equation}$) is attained in at least one plan.

\end{theorem}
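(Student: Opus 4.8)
The plan is to prove the equality \eqref{Fenchel-Rockafellar equation} by establishing the two inequalities separately, the ``easy'' one by a direct weak-duality argument and the ``hard'' one by a Fenchel--Rockafellar convex-duality argument in the spirit of the classical Kantorovich duality proof (as in Villani's book), adapted to our thermodynamic setting. First I would prove the inequality $\le$: given any admissible pair $(\varphi,\psi)\in\Phi_c$, so that $\varphi(x)-\psi(y)+\psi(\sigma(y))\ge c(x,y)-b(x,y)$ for some Lipschitz $b$ with $P(b)=0$, and given any $\pi\in\Pi(\mu,\sigma)$, integrate this inequality against $\pi$. The $\psi$-terms cancel because the $y$-marginal of $\pi$ is $\sigma$-invariant, and the $\varphi$-term integrates to $\int_X\varphi\,d\mu$ because the $x$-marginal is $\mu$; hence $\int_X\varphi\,d\mu\ge\int c\,d\pi-\int b\,d\pi$. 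Now add $H(\pi)$ to both sides and use that $P(b)=0$ gives $\int b\,d\pi+H(\pi)\le P(b)=0$, i.e. $-\int b\,d\pi\ge H(\pi)$, so $\int_X\varphi\,d\mu\ge\int c\,d\pi+H(\pi)$. Taking sup over $\pi$ and inf over $(\varphi,\psi)$ yields ``$\ge$'' is bounded below by the right-hand side, which is the $\le$ direction of the stated equality.

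For the reverse inequality $\ge$, I would set up the Fenchel--Rockafellar framework. Work in the Banach space $E=C(X\times\Omega)$ (or a suitable subspace containing the relevant functions) and consider the functional $\Theta_1(u)$ which equals $0$ if $u(x,y)$ can be written in the form $\varphi(x)-\psi(y)+\psi(\sigma(y))-c(x,y)+b(x,y)$ with $b$ normalized-type/zero-pressure ``dominating'' $u$ from above in the appropriate sense, and $+\infty$ otherwise; and the functional $\Theta_2(u)=P(c+u)-$(linear $\mu$-term), chosen so that its Legendre transform on measures recovers the constraint ``$\pi\in\Pi(\mu,\sigma)$'' together with the entropy term $H(\pi)$. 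The key inputs making this work are: (i) the pressure $P(\cdot)$ is convex and finite and $\|\cdot\|$-Lipschitz by Proposition \ref{pressure-properties}, so $\Theta_2$ is continuous, giving the qualification hypothesis for Fenchel--Rockafellar; (ii) the Legendre--Fenchel dual of $u\mapsto P(c+u)$ on the space of measures is exactly $-\big(\int c\,d\pi+H(\pi)\big)$ on $\Pi(\cdot,\sigma)$ and $+\infty$ elsewhere, which is essentially the content of Definition \ref{entropia} (the identity $H(\pi)=\inf_{b\,\mathrm{normalized}}-\int b\,d\pi$) together with Theorem \ref{gibbs-equilibrium}; (iii) the dual of the linear constraint functional picks out the $x$-marginal $=\mu$ and the $\sigma$-invariance of the $y$-marginal. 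Applying the Fenchel--Rockafellar theorem then produces a measure $\pi$ realizing the supremum and the equality of the two optimal values.

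The main obstacle I expect is step (ii): correctly identifying the convex conjugate of the pressure functional and verifying that the conjugate is finite (i.e. $-\infty < \int c\,d\pi+H(\pi)$) precisely on $\Pi(\cdot,\sigma)$, and that restricting to those $\pi$ whose $x$-marginal is $\mu$ comes out of the duality rather than being imposed by hand. Concretely, one must check that perturbing $c$ by an arbitrary $u\in C(X\times\Omega)$ and taking $P(c+u)$ has Legendre transform $\sup_\pi\{\int u\,d\pi - [P(c+u)-\text{?}]\}$ landing on $\int c\,d\pi+H(\pi)$; this requires the variational characterization of $H$ as an infimum over normalized potentials (Lemma \ref{L1a}) to pass through the Legendre duality cleanly, and some care that the infimum in the definition of $\Phi_c$ over Lipschitz $b$ with $P(b)=0$ matches, by density of Lipschitz functions in $C^0$ and continuity of $P$, the infimum over all continuous such $b$. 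A secondary technical point is the existence of the maximizing plan: this follows from weak-$*$ compactness of $\Pi(\mu,\sigma)\subset P(X\times\Omega)$ together with upper semicontinuity of $\pi\mapsto\int c\,d\pi+H(\pi)$ (the entropy is u.s.c. as shown just before Proposition \ref{pressure-properties}), so the supremum is attained; alternatively it is handed to us directly by the Fenchel--Rockafellar conclusion. Once both inequalities are in place the proof is complete; uniqueness of the optimizers and the identification of $\tilde\pi$ as the Gibbs plan for $c-\tilde\varphi$ (the content of Theorem \ref{varprinc} and Corollary \ref{corolmin}) would then be derived from strict concavity of the entropy on the relevant face and the slackness characterization \eqref{Pmu}.
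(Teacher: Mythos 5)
Your overall strategy is the paper's: reduce to a Fenchel--Rockafellar duality on $E=C(X\times\Omega)$, let the entropy come out of a Legendre transform via the variational characterization $H(\pi)=\inf_{b\ \mathrm{normalized}}-\int b\,d\pi$, let the marginal constraints come from dualizing the $\varphi(x)$ and $\psi(y)-\psi(\sigma(y))$ terms, and get attainment either from the theorem or from weak-$*$ compactness plus upper semicontinuity of $H$. Your weak-duality half is correct and complete (modulo the slip of calling the inequality ``$\le$'': integrating the admissibility inequality against $\pi\in\Pi(\mu,\sigma)$ and using $\int b\,d\pi+H(\pi)\le P(b)=0$ gives $\int\varphi\,d\mu\ge\int c\,d\pi+H(\pi)$, i.e.\ $\inf\ge\sup$), although the paper does not need it separately since Fenchel--Rockafellar yields the equality at once.

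The genuine gap is in the hard direction: the pair $(\Theta_1,\Theta_2)$ is never pinned down to the point where the duality machine can run. With $\Theta_1$ an indicator of functions ``of the form $\varphi(x)-\psi(y)+\psi(\sigma(y))-c(x,y)+b(x,y)$ dominating $u$ in the appropriate sense'' and $\Theta_2(u)=P(c+u)-(\mu\text{-linear term})$, it is not verified -- and not clear -- that $\inf_u[\Theta_1(u)+\Theta_2(u)]$ reproduces the primal value $\inf_{(\varphi,\psi)\in\Phi_c}\int\varphi\,d\mu$, and the two conjugate computations you yourself flag as the main obstacle (that the conjugate of the pressure perturbation is $-\bigl(\int c\,d\pi+H(\pi)\bigr)$ exactly on $\Pi(\cdot,\sigma)$, and that the constraint ``$x$-marginal $=\mu$'' falls out of the duality) are only conjectured, not proved. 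The paper avoids this by a cleaner split: after reducing to $P(c)=0$, take $\Theta(u)=0$ iff $u\ge c-b$ for some Lipschitz $b$ with $P(b)=0$ (and $+\infty$ otherwise), and $\Xi(u)=\int_X\varphi\,d\mu$ iff $u=\varphi(x)-\psi(y)+\psi(\sigma(y))$. Then $\Theta^*(-\pi)=\langle\pi,-c\rangle+\sup_{b:P(b)=0}\langle\pi,b\rangle$ for positive $\pi$ (and $+\infty$ otherwise, by scaling a negative test function whose pressure is pushed to zero by a constant), $\sup_{b:P(b)=0}\langle\pi,b\rangle=-H(\pi)$ directly from Definition \ref{entropia}, and $\Xi^*(\pi)$ is the indicator of the marginal conditions (obtained by scaling $\varphi$ and $\psi$), so that $M^+\cap\Pi^*(\mu)=\Pi(\mu,\sigma)$; the qualification hypothesis is checked at $u_0\equiv 1$, using $P(c)=0$. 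If you replace your vague $\Theta_1,\Theta_2$ by this pair (or make your pressure-based $\Theta_2$ precise and actually carry out the conjugate computation along the lines above, which the paper's Proposition \ref{pressure-properties} and Lemma \ref{L1a} make feasible), the argument closes; as written, the hard inequality is not established.
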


In Proposition \ref{uniqueminimizer} we will prove that the infimum in
\eqref{Fenchel-Rockafellar equation} is attained in exactly one function $\varphi$ and that this infimum coincides with the left hand side of the Variation Principle stated above.  In order to prove this theorem we follow \cite{Vi1} and we use the next theorem (see also \cite{LM}).

\begin{theorem}[\textbf{Fenchel-Rockafellar duality}]\label{FR}
\label{Fenchel}
Suppose $E$ is  a normed vector space,  $\Theta$ and $\Xi$ two convex functions defined on $E$ taking values in $\mathbb{R}\cup \{+\infty\}$. Denote $\Theta^{\ast}$ and  $\Xi^{\ast}$, respectively, the Legendre-Fenchel transform of  $\Theta$ and $\Xi$.
Suppose there exists  $v_0\in E$, such that $\Theta(v_0)<+\infty,\, \Xi(v_0)<+\infty$ and that $\Theta$ is continuous on $v_0$.

Then,
\begin{equation}
\inf_{v \in E}[\Theta(v)+\Xi(v)]=\sup_{f\in E^{*}}[-\Theta^{*}(-f)-\Xi^{*}(f)] \label{rockafeller}
\end{equation}
Moreover, the supremum in ($\ref{rockafeller}$) is attained in at least one element in $E^*$.
\end{theorem}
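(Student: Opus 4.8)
The plan is to prove this classical statement by the geometric Hahn--Banach theorem, in the standard way. First one records the trivial half of the identity: for every $v\in E$ and every $f\in E^{*}$, directly from the definition of the Legendre--Fenchel transform, $\Theta^{*}(-f)\ge -f(v)-\Theta(v)$ and $\Xi^{*}(f)\ge f(v)-\Xi(v)$, hence $-\Theta^{*}(-f)-\Xi^{*}(f)\le \Theta(v)+\Xi(v)$. Taking the supremum over $f$ and the infimum over $v$ gives $\sup_{f\in E^{*}}[-\Theta^{*}(-f)-\Xi^{*}(f)]\le \inf_{v\in E}[\Theta(v)+\Xi(v)]=:m$. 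Since $\Theta(v_{0})+\Xi(v_{0})<+\infty$ we have $m<+\infty$; and if $m=-\infty$ both sides equal $-\infty$ and every $f\in E^{*}$ realizes the supremum, so from now on we assume $m\in\mathbb{R}$.

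For the reverse inequality we separate, in $E\times\mathbb{R}$, the two convex sets
\[
C_{1}=\{(v,t)\in E\times\mathbb{R}:\ t\ge\Theta(v)\},\qquad
C_{2}=\{(v,t)\in E\times\mathbb{R}:\ t\le m-\Xi(v)\},
\]
that is, the epigraph of $\Theta$ and a vertical translate of the hypograph of $\Xi$. Both are nonempty: they contain $(v_{0},\Theta(v_{0}))$ and $(v_{0},m-\Xi(v_{0}))$, and $m-\Xi(v_{0})\le\Theta(v_{0})$. The continuity of $\Theta$ at $v_{0}$ makes the interior of $C_{1}$ nonempty: there are $\delta>0$ and $\varepsilon>0$ with $\Theta(v)<\Theta(v_{0})+\varepsilon$ whenever $\|v-v_{0}\|<\delta$, so the open set $\{(v,t):\|v-v_{0}\|<\delta,\ t>\Theta(v_{0})+\varepsilon\}$ is contained in $C_{1}$. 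Finally $\mathrm{int}\,C_{1}$ and $C_{2}$ are disjoint: any $(v,t)\in\mathrm{int}\,C_{1}$ has $t>\Theta(v)$ with $\Theta(v)<+\infty$, so if it also lay in $C_{2}$ one would get $\Theta(v)+\Xi(v)<t+(m-t)=m$, contradicting the definition of $m$.

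By the first geometric form of the Hahn--Banach theorem (which requires only that one of the two disjoint convex sets be open, with no compactness), there are a nonzero continuous linear functional on $E\times\mathbb{R}$ --- necessarily of the form $(v,t)\mapsto \ell(v)+st$ with $\ell\in E^{*}$, $s\in\mathbb{R}$ --- and a number $\alpha\in\mathbb{R}$ such that, after possibly changing all signs, $\ell(v)+st\ge\alpha$ on $\mathrm{int}\,C_{1}$ and $\ell(v)+st\le\alpha$ on $C_{2}$. Since a convex set with nonempty interior lies in the closure of its interior, the first inequality extends by continuity to $C_{1}$, so $\ell(v)+s\Theta(v)\ge\alpha$ for all $v\in\mathrm{dom}\,\Theta$ and $\ell(v)+s(m-\Xi(v))\le\alpha$ for all $v\in\mathrm{dom}\,\Xi$. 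Letting $t\to+\infty$ inside $C_{1}$ forces $s\ge0$; and $s\neq0$, for if $s=0$ then evaluating at $v_{0}$ (which lies in $\mathrm{dom}\,\Theta\cap\mathrm{dom}\,\Xi$ by hypothesis) gives $\ell(v_{0})=\alpha$, while $\ell(v)\ge\alpha=\ell(v_{0})$ for $\|v-v_{0}\|<\delta$ forces $\ell(u)\ge0$ for all $\|u\|<\delta$, hence (replacing $u$ by $-u$) $\ell\equiv0$, contradicting $(\ell,s)\neq0$. Thus $s>0$, and after rescaling we may take $s=1$. Then $\ell(v)+\Theta(v)\ge\alpha$ for all $v$ gives $\Theta^{*}(-\ell)=\sup_{v}[(-\ell)(v)-\Theta(v)]\le-\alpha$, and $\ell(v)-\Xi(v)\le\alpha-m$ for all $v$ gives $\Xi^{*}(\ell)\le\alpha-m$. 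Adding, $\Theta^{*}(-\ell)+\Xi^{*}(\ell)\le-m$, i.e. $-\Theta^{*}(-\ell)-\Xi^{*}(\ell)\ge m$; together with the trivial half this proves the equality and shows the supremum is attained at $f=\ell\in E^{*}$.

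The delicate points are all in the middle step: one must invoke the correct geometric form of Hahn--Banach valid in a general normed space, verify that $\mathrm{int}\,C_{1}\neq\emptyset$ --- which is exactly where the continuity of $\Theta$ at $v_{0}$ is used --- and carry out the sign and normalization bookkeeping that excludes a ``vertical'' separating hyperplane (this is where one needs $v_{0}\in\mathrm{dom}\,\Xi$ as well), so that the separating functional produces an honest element of $E^{*}$ rather than merely a scalar direction.
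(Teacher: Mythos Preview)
Your proof is correct and is the standard Hahn--Banach separation argument for Fenchel--Rockafellar duality. The paper itself does not prove this theorem at all: it quotes it as a known tool, citing Villani's \emph{Topics in Optimal Transportation}, and then applies it to derive Theorem~\ref{dualidade}. So there is no ``paper's proof'' to compare against; your argument is essentially the one found in Villani (separate the epigraph of $\Theta$ from a shifted hypograph of $\Xi$, use continuity of $\Theta$ at $v_0$ to get nonempty interior, rule out a vertical hyperplane via $v_0\in\mathrm{dom}\,\Theta\cap\mathrm{dom}\,\Xi$), and it is fine.
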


\begin{proof} \textit{(of Theorem \ref{dualidade})}

It is enough to consider the case were $P(c)=0$. Indeed, let us we assume the theorem is proved for costs with zero pressure, if $P(c)\neq 0$, we define $\tilde c=c-P(c)$. In this way $(\varphi,\psi)\in\Phi_{ c}$, if and only if, $(\varphi-P(c),\psi)\in\Phi_{\tilde c}$. Then,
\[\inf_{(\varphi,\psi)\in\Phi_{ c}}  \int_X \varphi \, d\mu=\inf_{(\varphi-P(c),\psi)\in\Phi_{ \tilde c}}  \int_X \varphi \, d\mu=\inf_{(\tilde \varphi,\psi)\in\Phi_{ \tilde c}}  \int_X \tilde \varphi \, d\mu +P(c)\]
\[=\sup_{\pi \in \Pi(\mu,\sigma)}\int_{X\times \Omega}\,  \tilde c\, d \pi + H(\pi) +P(c) = \sup_{\pi \in \Pi(\mu,\sigma)}\int_{X\times \Omega}\,  c\, d \pi + H(\pi).\]
Hence, from now on we will assume that $P(c)=0$.

We want to use, the  Fenchel-Rockafellar duality in the proof. For this purpose we  define
$$E=C(X\times \Omega),$$
where $C(X\times \Omega)$ is the set of all continuous functions in $X\times \Omega$ taking values in $\mathbb{R}$, with the usual sup norm.
Moreover, $E^*=M (X \times \Omega)$ is the set of continuous linear operators in  $C(X\times \Omega)$ taking values in $\mathbb{R}$ with the  total variation norm. The elements in $M(X\times \Omega)$ are signed measures.

Define $\Theta,\Xi: E\longrightarrow \mathbb{R}\cup\{+\infty \}$ from
$$\Theta(u)=\left\{\begin{array}{ll}
0,&  \mbox{if} \ u(x,y)\geq c(x,y)-  b(x,y), \ \forall(x,y),\\ & \mbox{ for some }  b \mbox{ with } P(b)=0 ,\\
\\

+\infty,&  \mbox{in the other case}
\end{array}\right.
$$
%
%
%
and
$$
\Xi(u)=\left\{\begin{array}{ll}
\int_X\varphi \,d\mu,&  \mbox{if} \ u(x,y)=\varphi(x)-\psi(y)+(\psi \circ \sigma)(y), \\
&\mbox{where} \ (\varphi,\psi)\in C(X)\times C(\Omega)\,,\\
\\
+\infty,&  \mbox{in the other case}.
\end{array}\right.
$$
{Note that  $\Xi$ is well defined. Indeed, if  $u(x,y)= \varphi_{1}(x)-\psi_{1}(y)+(\psi_{1} \circ \sigma)(y) =\varphi_{2}(x)-\psi_{2}(y)+(\psi_{2} \circ \sigma)(y) $,
then, integrating over a probability in $\Pi(\mu,\sigma)$, we conclude that
$\int_X\varphi_{1}(x)d\mu=\int_X\varphi_{2}(x)d\mu
,\, \forall \, x\in X$.}
\bigskip

 Now we will show that the hypothesis in Theorem \ref{FR} are satisfied. 
The convexity of $\Xi$ is immediate. To show  the convexity of $\Theta$
  take $u_1$ and $u_2$ such that $\Theta(u_1)=\Theta(u_2)=0$, then there exist $b_1$ and  $b_2$ Lipschitz with $P(b_1)=P(b_2)=0$, such that, $u_1\geq c-b_1$ and $u_2\geq c-b_2$. Note that
$$\lambda u_1+(1-\lambda)u_2\geq c-(\lambda b_1+(1-\lambda)b_2),$$
and then using item (c) of  Proposition \ref{pressure-properties}, we see
 $$a=P(\lambda b_1+(1-\lambda)b_2) \leq \lambda P(b_1)+(1-\lambda)P(b_2)=0.$$
Therefore, by item (b) of Proposition \ref{pressure-properties},   $P(\,[\lambda b_1+(1-\lambda)b_2\,]-\,a )=0.$
In this way
$$\lambda u_1+(1-\lambda)u_2\geq c-(\lambda b_1+(1-\lambda)b_2)\geq c-([\lambda b_1+(1-\lambda)b_2]-a).$$
This shows that $\Theta(\lambda u_1+(1-\lambda)u_2)=0$ and hence $\Theta$ is convex.
Finally we exhibit $u_0$ in the domain of  $\Xi$ and $\Theta$: take $u_0=1$. Then, $\Xi (1)=1$ and $\Theta(1)=0$, because  $ 1>0=c-c$, and $P(c)=0$. If $w\in C(X\times\Omega)$ such that $\|w-1\|<1/2$, then $w>0=c-c$, and $P(c)=0$. This shows that $\Theta$ is continuous in $u_0.$

\medskip

Let us now compute the Legendre-Fenchel transform of $\Theta$ and $\Xi$.
We denote by  $\pi$ an element in $ E^{*}=M(X \times\Omega)$.

For any  $\pi\in E^{*}$, by the definition of $\Theta$
we get
\begin{eqnarray*}
&&\Theta^*(-\pi) =\sup_{u\in E}\left\{\langle -\pi,u \rangle- \Theta(u) \right\}\\
&&= \sup_{u\in E}\left\{\langle\pi,u\rangle : \ u\leq - c + b,\,\, b \mbox{ with } P(b)=0 \right\}.
\end{eqnarray*}
If $\pi$ is not a positive functional, then there exists a function $v \leq 0$, $v\in C(X\times \Omega)$, such that, $\langle \pi, v\rangle > 0$. We can assume that $v$ is Lipschitz. Note that $c+v\leq c$, hence using item (a) of Proposition \ref{pressure-properties} and that $P(c)=0$, we have $P(c+v)\leq 0$. Therefore, $v=-c+(c+v)\leq -c+(c+v-P(c+v))$, with $P(c+v-P(c+v))=0$. Taking  $u=\lambda v$, and considering  $\lambda\rightarrow +\infty$, we get that
$$\sup_{u\in C(X\times \Omega)}\left\{\langle \pi, u\rangle: \, u\leq - c + b , \,\, \, \, b \mbox{ with } P(b)=0\right\}=+\infty.$$
Therefore, we assume from now on that $\pi$ is a positive functional.
Note that, if $\pi\in M^+(X\times \Omega)$, then
$$\sup_{u\in C(X\times \Omega)}\left\{\langle \pi, u\rangle: \, u\leq - c + b , \,\, \, \, b \mbox{ with } P(b)=0\right\}=\langle \pi, -c\rangle+\sup_{b\,: \,  P(b)=0}\langle \pi, b\rangle.  $$
Hence, we obtain
\begin{eqnarray}
\label{tetaestrela}
\Theta^*(-\pi)=\left\{\begin{array}{ll}
\displaystyle\langle \pi,\,( - c \,)\,\rangle +\sup_{b\,: \,  P(b)=0}\langle \pi, b\rangle\,,\ \mbox{if} \ \pi\in M^+(X\times \Omega) \\
\\
+\infty, \ \mbox{in the other case}.
\end{array}\right.
\end{eqnarray}

Analogously, by the definition of $\Xi$ we get that
$$\begin{array}{l}
\Xi^*(\pi)=\displaystyle{\sup_{u\in E}}\left\{\langle \pi,u \rangle - \Xi(u)\right\} \\ \\
=\displaystyle{\sup_{u\in E}}\left\{\begin{array}{lll} \,
\langle \pi, u \rangle  -\int_X \varphi\, d\mu   : \\
u(x,y)=\varphi(x)-\psi(y)+ \psi (\sigma (y)) \ \mbox{where} \ (\varphi,\psi)\in C(X)\times C(\Omega)
\end{array}\right\} \\ \\
=\displaystyle{\sup_{ (\varphi,\psi)\in C(X)\times C(\Omega),}\left\{ \langle \pi, \varphi(x)- \psi(y)+ \psi (\sigma (y)) \rangle- \int_X \varphi\, d\mu   \right\}.}
\end{array}$$

Note that, if $\langle \pi, \varphi(x) \rangle > \int_X \varphi \,d\mu$,    for some $\varphi$, choosing
$\lambda.\varphi$ with $\lambda \to\infty$, the supremum will be equal to $+\infty$. Also if
$\langle\pi, \psi(y)- \psi (\sigma (y))\rangle > 0,$ for some $\psi$, taking $\lambda.\psi$ with $\lambda \to \infty$, the supremum will be $+\infty.$ The case where we consider  the other inequality is analogous.
Then, we can assume that $\langle \pi, \varphi(x) \rangle = \int_X \varphi \, d\mu$ and $\langle \pi, \psi(y)- \psi(\sigma(y))\rangle= 0$.

In order to simplify the notation, we define
\begin{eqnarray*}
  \Pi^*(\mu) = \left\{  \pi \in M(X\times \Omega): \, \begin{array}{ll} \langle \pi, \varphi(x)\rangle=\int_X \varphi \,d\mu \, \, \text{and} \, \langle \pi, \psi(y)- \psi(\sigma(y))\rangle= 0 \\
     \, \forall (\varphi,\psi) \in C(X)\times C(\Omega)\end{array} \right\}.
\end{eqnarray*}

{With this notation we can write}
\begin{equation}
\label{xiestrela}
\Xi^*(\pi)=\left\{\begin{array}{lll}
\ \displaystyle{0, \ \mbox{if} \  \pi\in\Pi^*(\mu),}\\
\\
+\infty, \ \mbox{in the other case}.
\end{array}\right.
\end{equation}

We observe that
 if  $\pi \in  M^+(X\times \Omega)\cap\Pi^*(\mu)$, then
$\langle \pi, 1\rangle = \mu(1) = 1$, $\langle \pi, u \rangle \geq 0$ when
 $u \geq 0$ and also
$\langle \pi, \cdot\rangle$ is linear.
{From these properties we get that $\pi \in P(X\times \Omega)$. Moreover, by definition of $\Pi^*(\mu)$, the $x$-marginal of $\pi$  is $\mu$ and the $y$-marginal of $\pi$ is $\sigma$-invariant. Hence, we conclude $M^+(X\times \Omega)\cap\Pi^*(\mu) = \Pi(\mu,\sigma)$.}

By definition \ref{entropia}, if $\pi\in\Pi(\mu,\sigma)$, we get  $$\displaystyle -\sup_{b\,: \,  P(b)=0} \pi( b)= -\sup_{b \text{ normalized }} \int_{X\times \Omega}  b \, d\pi=H(\pi).$$

The left hand side of (\ref{rockafeller}) is given by
\begin{eqnarray*}
& &\inf_{u\in E}[\Theta(u)+\Xi(u)]\\
& & =\inf_{u\in E}\bigg\{\int_X\varphi \,d\mu :  \varphi(x)-[\psi -(\psi \circ \sigma)](y)\geq c(x,y) - b(x,y),\\
&&\text{ for some } b \text{ with } P(b)=0, (\varphi,\psi)\in C(X)\times C(\Omega)\bigg\}\\
&&=\inf_{(\varphi,\psi)\in\Phi_c } \,\, \int_X \varphi \, d\mu .
\end{eqnarray*}

The right hand side of (\ref{rockafeller}) is given by
\begin{eqnarray*}
&&\sup_{\pi\in E^{*}}[-\Theta^{*}(-\pi)-\Xi^{*}(\pi)]\\
&&=\sup_{\pi \in E^{*}}\left\{
\begin{array}{ll}
\displaystyle \pi(c) +H(\pi)\, \,   ,
 & \mbox{if} \ \pi\in\Pi(\mu,\sigma) \\
\\
-\infty,  &  \mbox{in the other case}
\end{array}
\right\}\\
&&= \sup_{\pi\in \Pi(\mu,\sigma)} \{\pi(c)+H(\pi) \}.
\end{eqnarray*}
 \newline

Therefore,  from (\ref{rockafeller}) we get
\begin{equation*}
\inf_{(\varphi,\psi)\in\Phi_c} \int_X \varphi \, d\mu = \sup_{\pi \in \Pi(\mu,\sigma)} \int_{X\times \Omega}  c  \, d\pi\, +\, H(\pi).
\end{equation*}

Theorem \ref{Fenchel} claims that%
 $$\sup_{f \in E^*}[-\Theta^*(-f)-\Xi^*(f)]=\sup_{\pi \in \Pi(\mu,\sigma)} \int_{X\times \Omega} c \, d\pi+H(\pi).$$%
is attained, for at least one element (but we already know this by compactness).

\end{proof}

\bigskip

\noindent
\textbf{Example 3:}

We consider again as an example the case where $X=\{1,2\}$, $\Omega=\{1,2\}^{\mathbb{N}}$, and $c$ is such that depends just on two coordinates on $y$, that is, $c(x,y) = c(x,\,y_1\,y_2)=c^{\,\,x}(y_1y_2)$, $x=1,2$, and

\[A^{1}=\left(\begin{array}{cc}a^{1}_{11} & a^{1}_{12}\\ a^{1}_{21} & a^{1}_{22} \end{array}\right)\,\,\,,\,\,
A^{2}=\left(\begin{array}{cc}a^{2}_{11} & a^{2}_{12}\\ a^{2}_{21} & a^{2}_{22} \end{array} \right),\]
where $a^{i}_{r,s}= e^{ c^i(r,s)}$, $i,r,s=1,2$.

We fix $\mu=(\mu_1,\mu_2)$ and we are going to explain how one can get the solution $\pi\in \Pi(\mu,\sigma)$ of the above transport problem via the equation
\[\inf_{\varphi:P(c-\varphi)= 0} \int_{X} \varphi(x) \, d\mu = \sup_{\pi \in \Pi(\mu,\sigma)} \int_{X\times \Omega}  c  \, d\pi\, +\, H(\pi).\]

We consider first the left side expression.

The function $\varphi$ is described by $(\varphi_1,\varphi_2).$ The condition $P(c-\varphi)=0$ means that

\[ \left(\begin{array}{cc}         e^{c^{1}_{11}} \,e^{-\varphi_1} +e^{c^{2}_{11}}\,e^{-\varphi_2} & e^{c^{1}_{12}} \,e^{-\varphi_1} +e^{c^{2}_{12}}\,e^{-\varphi_2}\\ e^{c^{1}_{21}}\,e^{-\varphi_1}+ e^{c^{2}_{21}}\,e^{-\varphi_2} & e^{c^{1}_{22}}\,e^{-\varphi_1}+ e^{c^{2}_{22}}\,e^{-\varphi_2} \end{array}\right)\]
has a dominant  eigenvalue $1$.

We are interested in the $z_1= e^{-\varphi_1}$, $z_2= e^{-\varphi_2}$ which are solutions of the equation
$$ \det \, \left(\begin{array}{cc}         (e^{c^{1}_{11}} \,z_1   +e^{c^{2}_{11}}\,z_2)\,-1 & e^{c^{1}_{12}} \,z_1 +e^{c^{2}_{12}}\,z_2\\ e^{c^{1}_{21}}\,z_1+ e^{c^{2}_{21}}\,z_2 & (e^{c^{1}_{22}}\,z_1+ e^{c^{2}_{22}}\,z_2)-1 \end{array}\right)\,=0.$$

In this way we get that $(z_1,z_2)$ describes an algebraic curve on $\mathbb{R}^2$. This equation does not discriminate if
the eigenvalue $1$ is maximal but this is not a big problem. Now we have to find the points $(z_1,z_2)$ of such curve such that its normal vector is
colinear with the vector $v(z_1,z_2)=(\mu_1\frac{1}{z_1},\mu_2\frac{1}{z_2})$, which is the gradient of the function $(z_1,z_2) \to \log z_1 \mu_1 + \log z_2 \mu_2$ (Lagrange multipliers). This will determine a finite set (in the generic case) of possible $\varphi=(\varphi_1,\varphi_2)$, which are critical points for $\varphi \to \int \varphi\, d\mu$. We test these possibilities and then we get the minimal $\varphi$ which we denote by $\tilde{\varphi}$. In this way we determine the left hand side of the last main equality and the value of the $\mu$ pressure of $c$.

Now we consider the potential $\tilde{c}= c-\tilde{\varphi}$. Finally using the same procedure of example 1 one can get the Gibbs plan for $\tilde{c}$. In this way we solve the Ergodic Transport problem for $c$  with a fixed marginal $\mu$.

\bigskip

To show the uniqueness in  the next proposition we will use the property that the pressure is an analytical function of the potential (see \cite{Ru} and \cite{SSS}).

\begin{proposition}\label{uniqueminimizer}
\begin{equation}
\inf_{(\varphi,\psi)\in\Phi_c} \int_X \varphi \, d\mu=\inf_{\varphi:P(c-\varphi)= 0} \int_X \varphi \, d\mu \end{equation}
%
The infimum is attained  at exactly  one function $\tilde{\varphi}$.
\end{proposition}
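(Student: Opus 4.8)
The plan is to prove the proposition in three steps: equality of the two infima, existence of a minimizer, and uniqueness; I will use only the material already available (the two variational principles, the properties of the pressure in Proposition \ref{pressure-properties}, and the analyticity of the pressure).

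\emph{Equality of the infima.} The key elementary fact is that for every $\pi\in\Pi(\cdot,\sigma)$ and every $\psi\in C(\Omega)$ one has $\int(\psi\circ\sigma-\psi)\,d\pi=0$, so adding such a coboundary does not change the pressure of plans. First I would show $\inf_{\Phi_c}\int\varphi\,d\mu\le\inf_{P(c-\varphi)=0}\int\varphi\,d\mu$: if $P(c-\varphi)=0$ then $(\varphi,0)\in\Phi_c$ with $b=c-\varphi$, which is Lipschitz because $X$ is finite and has zero pressure. For the reverse inequality, if $(\varphi,\psi)\in\Phi_c$ then $c-\varphi\le b+\psi\circ\sigma-\psi$ for some Lipschitz $b$ with $P(b)=0$; using that $\sigma$-coboundaries do not change the pressure of plans, that Lipschitz functions are dense in $C(\Omega)$, and item (a) of Proposition \ref{pressure-properties}, one gets $P(c-\varphi)\le P(b)=0$, so setting $\delta:=-P(c-\varphi)\ge 0$ and $\varphi_0:=\varphi-\delta$ (item (b)) gives $P(c-\varphi_0)=0$ and $\int\varphi_0\,d\mu\le\int\varphi\,d\mu$. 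By Theorem \ref{dualidade} this common value is $P_\mu(c)=\sup_{\pi\in\Pi(\mu,\sigma)}\big(\int c\,d\pi+H(\pi)\big)$.

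\emph{Existence.} Here I would write $\varphi=t\,\mathbf 1+\eta$ with $\mathbf 1\equiv 1$ and $\int\eta\,d\mu=0$, so $\int\varphi\,d\mu=t$ and, by Proposition \ref{pressure-properties}(b), $P(c-\varphi)=P(c-\eta)-t$. Given a minimizing sequence $\varphi_n=t_n\mathbf 1+\eta_n$ with $P(c-\varphi_n)=0$, the numbers $t_n=\int\varphi_n\,d\mu\to P_\mu(c)$ are bounded, hence so is $P(c-\eta_n)=t_n$. If $(\eta_n)$ were unbounded, pass to a subsequence with $\eta_n/\|\eta_n\|\to\hat\eta$, a unit vector with $\int\hat\eta\,d\mu=0$; since $\operatorname{supp}(\mu)=X$, some coordinate $x_-$ has $\hat\eta(x_-)<0$, so $\eta_n(x_-)\to-\infty$. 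Taking the positive eigenfunction $h_n$ of $L_{c-\eta_n}$ and a point $y_n$ minimizing it, $\lambda_{c-\eta_n}h_n(y_n)=(L_{c-\eta_n}h_n)(y_n)\ge h_n(y_n)(L_{c-\eta_n}1)(y_n)$, whence $\lambda_{c-\eta_n}\ge(L_{c-\eta_n}1)(y_n)\ge d\,e^{-\|c\|_\infty}e^{-\eta_n(x_-)}\to\infty$; by Theorem \ref{gibbs-equilibrium} this forces $P(c-\eta_n)\to\infty$, a contradiction. So $(\varphi_n)$ is bounded, and a convergent subsequence has a limit $\tilde\varphi$ with $P(c-\tilde\varphi)=0$ (continuity of the pressure, Proposition \ref{pressure-properties}(d)) and $\int\tilde\varphi\,d\mu=P_\mu(c)$, i.e.\ a minimizer.

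\emph{Uniqueness, and the main obstacle.} Set $g(\varphi):=P(c-\varphi)$, convex by Proposition \ref{pressure-properties}(c) and real-analytic in $\varphi\in\mathbb{R}^{\#X}$ by \cite{Ru},\cite{SSS}. Suppose $\varphi_1\ne\varphi_2$ are both minimizers; along $s\mapsto\varphi_1+s(\varphi_2-\varphi_1)$, $g$ is convex with $g(\varphi_1)=g(\varphi_2)=0$. If $g$ is not affine on $[0,1]$, then $g\big(\tfrac{\varphi_1+\varphi_2}{2}\big)<0$ and subtracting the positive constant $-g\big(\tfrac{\varphi_1+\varphi_2}{2}\big)$ produces a feasible $\varphi_*$ with $\int\varphi_*\,d\mu<P_\mu(c)$, impossible. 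If $g$ is affine on $[0,1]$ it is $\equiv 0$ there, hence, by analyticity, $\equiv 0$ on the whole line $\{\varphi_1+s(\varphi_2-\varphi_1):s\in\mathbb{R}\}$; but with $\eta:=\varphi_2-\varphi_1\ne 0$ this is absurd, since if $\eta$ is a nonzero constant then $s\mapsto g(\varphi_1+s\eta)$ is nonconstant affine by item (b), and if $\eta$ is nonconstant then in one of the directions $s\to\pm\infty$ a coordinate of $\varphi_1+s\eta$ tends to $-\infty$, forcing $g\to\infty$ exactly as in the existence step. Hence $\varphi_1=\varphi_2$. (Alternatively, analyticity can be avoided: two minimizers force the Gibbs plans of the normalizations of $c-\varphi_1$ and $c-\varphi_2$ both to equal the maximizer of $P_\mu(c)$, so by Lemma \ref{L2} the Jacobians agree, $\overline{c-\varphi_1}=\overline{c-\varphi_2}$, and since $\lambda_{c-\varphi_i}=1$ this makes $\varphi_1-\varphi_2$ simultaneously a function of $x$ and a continuous $\sigma$-coboundary in $y$, hence $0$.) The hard part throughout is the coercivity estimate — quantifying how $P(c-\varphi)$ blows up when a coordinate of $\varphi$ is driven to $-\infty$ — which is what powers both the existence argument and the exclusion of the degenerate case in the uniqueness argument.
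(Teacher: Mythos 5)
Your proof is correct, and its skeleton is the same as the paper's: identify the $\Phi_c$-infimum with the constrained infimum by monotonicity and constant shifts, get existence from the fact that $P(c-\varphi)$ blows up when a coordinate of $\varphi$ is driven to $-\infty$ (which is where $\mathrm{supp}(\mu)=X$ enters), and get uniqueness from convexity plus real-analyticity of the pressure. The differences are in packaging. For existence, the paper normalizes $\varphi(0)=0$ and proves a uniform coercivity estimate $F(tv)\geq t\varepsilon+\xi$ on the sphere for $F(v)=-\int\varphi\,d\mu+P(c+\varphi)$, using plans with Dirac $x$-marginals; you instead split $\varphi=t\mathbf 1+\eta$ with $\int\eta\,d\mu=0$ and run a minimizing-sequence compactness argument, with the blow-up obtained through the eigenvalue bound $\lambda_{c-\eta_n}\geq (L_{c-\eta_n}1)(y_n)$ rather than through a $\delta_{x}$-marginal plan (the two estimates are interchangeable); note that your boundedness of $t_n$ leans on Theorem \ref{dualidade}, whereas the paper's coercivity route is self-contained at this point — harmless, since Theorem \ref{dualidade} is proved independently beforehand. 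For uniqueness, you apply convexity/analyticity to $g(\varphi)=P(c-\varphi)$ along the segment between two minimizers and handle the non-flat case by an explicit improvement at the midpoint, while the paper applies the same convexity/analyticity/coercivity trio to $F$; these are the same argument in different clothes, and your write-up is actually more explicit than the paper's terse remark about the second derivative. Your parenthetical alternative — forcing a common maximizer of $P_\mu(c)$ to be the Gibbs plan of both normalizations, so that Lemma \ref{L2}, full support (Proposition \ref{Gi}) and $\lambda_{c-\varphi_i}=1$ make $\varphi_1-\varphi_2$ a function of $x$ equal to a continuous coboundary in $y$, hence zero — is a genuinely different, analyticity-free uniqueness argument not in the paper, and it is worth keeping, though the final "hence $0$" deserves the one-line justification (fix $x$ to see the coboundary is a constant, then integrate against an invariant measure).
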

\begin{proof} 

 Given $(\varphi,\psi)\in\Phi_c$, there exists $b$ such that $P(b)=0$ and
$$c(x,y)- \varphi(x)+\psi(y)- (\psi \circ \sigma)(y) \leq b(x,y).$$
 Then, by item (a) of proposition \ref{pressure-properties},  $P(c(x,y)- \varphi(x))\leq 0$. On the other hand, if $P(c(x,y)- \varphi(x))=a\leq 0$, then we  define $b(x,y)=c(x,y)- \varphi(x)-a$. We have that $P(b)=0$ and $b(x,y)\geq c(x,y)- \varphi(x)$. Hence,
\begin{equation*}\label{infimo}
\inf_{(\varphi,\psi)\in\Phi_c} \int_X \varphi \, d\mu=\inf_{\varphi:P(c-\varphi)\leq 0} \int_X \varphi \, d\mu .
\end{equation*}

By monotonicity of the pressure, we have
$$\inf_{(\varphi,\psi)\in\Phi_c} \int_X \varphi \, d\mu=\inf_{\varphi:P(c-\varphi)= 0} \int_X \varphi \, d\mu .  $$
%

%
Note also that, if $P(c-\varphi)\neq 0$, we can add the constant $-P(c-\varphi)$ and get $P(c-\varphi - P(c-\varphi))=0$. Then,
\begin{equation}\label{infpres}\inf_{\varphi:P(c-\varphi)= 0} \int_X \varphi \, d\mu  = \inf_{\varphi} \int_X \varphi\, d\mu + P(c-\varphi)=  \inf_{\varphi} -\int_X \varphi\, d\mu + P(c+\varphi)
\end{equation}
Consider the continuous function $F:C(X) \to \mathbb{R}$ given by
\[F(\varphi)= -\int_X \varphi\, d\mu + P(c+\varphi),
\]
we see that, if $a \in \mathbb{R}$, then $F(\varphi+a)=F(\varphi)$. This shows we can minimize $F(\varphi)$ among $\varphi$ such that $\varphi(0)=0$.

In order to prove the uniqueness of the minimizer of $F$, we assume that $X=\{0,...,k\}$ has $k+1$ elements,   $\varphi(0)=0$ and we identify $\varphi$ with $v\in\mathbb{R}^{k}$, in the following way, $\varphi=(0,v_1,...,v_k)$, i.e., $\varphi(j) = v_j, \,j=1,2,...,k$.

Therefore, $F:\mathbb{R}^k \to \mathbb{R}$
associates to each vector 
$v \in \mathbb{R}^k$ the number
\[F(v)= -\int_X \varphi\, d\mu + P(c+\varphi).\]

In this way, to finish the proof, we need  to  show that $F(v)$ has only one minimizer $\tilde{v}= \tilde{v}_c$.


We begin by proving that when $t\to+\infty$, $F(tv)\to+\infty$ 
uniformily in $\mathbb{S}^{k-1}$, i.e, there exist an $\epsilon>0$ and $\xi \in \mathbb{R}$, such that, for any $v \in \mathbb{S}^{k-1}$, we have
that
\begin{equation}\label{explodeuniformemente}
F(tv)\geq t\varepsilon + \xi.
\end{equation}

In order to do that, let $K_1=\{v\in\mathbb{S}^{k-1} : v_i\geq 0 \text{ for some } i   \}$ and $K_2=\{v\in\mathbb{S}^{k-1} : v_i\leq 0 \,\,\,\forall \,i   \}$. We have $K_1\cup K_2=\mathbb{S}^{k-1} $ and $K_1,K_2$ are compact sets.

Using the fact that  $\text{supp}(\mu)=X$, we have that the functions $-\int_X \varphi\, d\mu +\max_i \varphi(i)$ and $-\int_X \varphi \,d\mu $ are continuous and strictly positive in $K_1$ and $K_2$, respectively, where $\varphi=(0,v_1,...,v_k)$. Then,  there exists $\varepsilon>0$, such that, $-\int_X \varphi\, d\mu +\max_i \varphi(i)\geq \varepsilon$, for all $v\in K_1$, and   $-\int_X \varphi \,d\mu \geq \varepsilon$, for all $v\in K_2$.

Let us take $v\in K_1$ and  a plan $\pi$ with $x$-marginal $\delta_k$, such that  $\varphi(k)=\max_i \varphi(i)$, therefore $P(t\varphi)\geq \int_{X\times \Omega} t\varphi \,d\pi+H(\pi)= t\varphi(k)+H(\pi)\geq t\varphi(k)=
t \max_i \varphi(i) $.
Hence,
\begin{eqnarray*}
F(tv)&=&-\int_X t\varphi \,d\mu +P(c+t\varphi)\geq -\int_X t\varphi\, d \mu +P(t\varphi) +\min c\\
&\geq& -\int_X t\varphi \,d \mu + t \max_i \varphi(i) +\min c\geq t\varepsilon +\min c.
\end{eqnarray*}
Now, take $v\in K_2$ and a  plan  $\pi$ with $x$-marginal $\delta_0$, we have
$P(t\varphi)\geq \int_{X\times \Omega} t\varphi \,d\pi+H(\pi)= H(\pi)\geq 0 $.
Hence,
\begin{eqnarray*}
F(tv)&=&-\int_X t\varphi \,d\mu +P(c+t\varphi)\geq -\int_X t\varphi\, d \mu +P(t\varphi) +\min c\\
&\geq& -\int_X t\varphi \,d \mu   +\min c\geq t\varepsilon +\min c.
\end{eqnarray*}

We conclude that \eqref{explodeuniformemente} holds,
and this shows that $F$ assume a minimum
$\tilde{v}$ in $\mathbb{R}^k$.

Now we will prove that the minimizer $\tilde{v}$ is unique.  Note that $F$ is well defined for any $v$. We want to show that $F$ is locally analytic.
It will be the restriction of a complex analytic function.
 We  use the analyticity of the pressure, which imply that  $F$ is analytic on $v$.
Indeed, note that $A_v(y)=A_\varphi(y) = \log\left(\sum_{x}e^{c(x,y)+ \varphi(x)}\right)$ is an analytic function on $v$ (locally can be extended to a complex analytic function) taking values on the Banach space of Holder potentials on the variable $y$. As the composition of analytic functions is  also analytic and the pressure is analytic on the potential (see Theorem 5.26 in \cite{Ru}) we get our claim. As $F$ is globally defined and locally analytic then it is analytic in the all domain.

We also know that the pressure is convex as a function of $c$ (see Proposition \ref{pressure-properties}). This implies that $F$ is also convex in $v$.

Suppose that $\tilde{v}$ and $\hat{v}$ are minimizers for $F$. Using the convexity of $F$,  we know that all convex combinations of $\tilde{v}$ and $\hat{v}$ are minimizers for $F$.

Now let the function $G:\mathbb{R}\to\mathbb{R}$ be defined by $G(t)=F(\tilde{v} + t (\hat{v}-\tilde{v}))$.
$G$ is an analytical function which converges to $+\infty$ when $t \to \pm \infty$.

Note that the second derivative of $F$ can not be 0.
Therefore, it can not be constant in a open interval of the real line, and we conclude that $\tilde{v}=\hat{v}$.

\end{proof}

\begin{corollary}\label{corolmin}
Let $\tilde{\varphi}$ be the unique minimizer for the
Fenchel-Rockafellar duality
\eqref{Fenchel-Rockafellar equation}, then
the Gibbs plan $\pi_{c- \tilde{\varphi}}$,
for $c- \tilde{\varphi}$, belongs to $\Pi(\mu,\sigma)$ and is the unique maximizer of \eqref{Fenchel-Rockafellar equation}.
\end{corollary}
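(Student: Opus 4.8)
The plan is to identify the \emph{equilibrium plan} for $c-\tilde\varphi$ with a maximizer of \eqref{Fenchel-Rockafellar equation} that is already available by compactness, and then to invoke the uniqueness of equilibrium plans. One caveat of notation must be settled first: $c-\tilde\varphi$ need not be normalized (only $P(c-\tilde\varphi)=0$), so ``the Gibbs plan $\pi_{c-\tilde\varphi}$ for $c-\tilde\varphi$'' is to be read as the Gibbs plan of the normalization $\overline{c-\tilde\varphi}$, which by Theorem \ref{gibbs-equilibrium} is exactly the unique equilibrium plan for $c-\tilde\varphi$ over $\Pi(\cdot,\sigma)$.

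First I would record the facts coming from Proposition \ref{uniqueminimizer}: the minimizer $\tilde\varphi$ of \eqref{Fenchel-Rockafellar equation} satisfies $P(c-\tilde\varphi)=0$, and $\int_X\tilde\varphi\,d\mu$ equals the common value in \eqref{Fenchel-Rockafellar equation}. By the compactness argument stated just before Theorem \ref{varprinc} there is a plan $\tilde\pi\in\Pi(\mu,\sigma)$ with $\int_{X\times\Omega} c\,d\tilde\pi+H(\tilde\pi)=\sup_{\pi\in\Pi(\mu,\sigma)}\big(\int_{X\times\Omega} c\,d\pi+H(\pi)\big)$, and by Theorem \ref{dualidade} this supremum equals $\int_X\tilde\varphi\,d\mu$.

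Next I would check that $\tilde\pi$ realizes the pressure $P(c-\tilde\varphi)$. Since the $x$-marginal of $\tilde\pi$ is $\mu$, we have $\int_{X\times\Omega}\tilde\varphi(x)\,d\tilde\pi=\int_X\tilde\varphi\,d\mu$, hence
\[\int_{X\times\Omega}(c-\tilde\varphi)\,d\tilde\pi+H(\tilde\pi)=\Big(\int_{X\times\Omega} c\,d\tilde\pi+H(\tilde\pi)\Big)-\int_X\tilde\varphi\,d\mu=0=P(c-\tilde\varphi).\]
As $\tilde\pi\in\Pi(\cdot,\sigma)$, this says $\tilde\pi$ is an equilibrium plan for $c-\tilde\varphi$, and by Theorem \ref{gibbs-equilibrium} such a plan is unique and coincides with $\pi_{c-\tilde\varphi}$. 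Therefore $\pi_{c-\tilde\varphi}=\tilde\pi\in\Pi(\mu,\sigma)$, and $\pi_{c-\tilde\varphi}$ attains the supremum in \eqref{Fenchel-Rockafellar equation}.

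Finally, for uniqueness of the maximizer I would repeat the displayed computation: if $\pi_1\in\Pi(\mu,\sigma)$ also attains the supremum, then, again using $\int_{X\times\Omega}\tilde\varphi(x)\,d\pi_1=\int_X\tilde\varphi\,d\mu$, one gets $\int_{X\times\Omega}(c-\tilde\varphi)\,d\pi_1+H(\pi_1)=0=P(c-\tilde\varphi)$, so $\pi_1$ is an equilibrium plan for $c-\tilde\varphi$ and hence $\pi_1=\pi_{c-\tilde\varphi}$ by Theorem \ref{gibbs-equilibrium}. There is no substantial obstacle here: everything reduces to combining Theorems \ref{dualidade} and \ref{gibbs-equilibrium} with Proposition \ref{uniqueminimizer}, and the only point needing care is the normalization bookkeeping mentioned at the outset (together with the fact that membership of $\tilde\pi$ in $\Pi(\mu,\sigma)$ is what lets the $\tilde\varphi$-integral be transferred between $\tilde\pi$ and $\mu$).
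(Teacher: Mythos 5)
Your proposal is correct and follows essentially the same route as the paper: use Theorem \ref{dualidade} and Proposition \ref{uniqueminimizer} to get $P(c-\tilde\varphi)=0$, observe that any maximizer in $\Pi(\mu,\sigma)$ (using that its $x$-marginal is $\mu$) attains the unconstrained pressure of $c-\tilde\varphi$ and is therefore the unique equilibrium plan, i.e.\ the Gibbs plan $\pi_{c-\tilde\varphi}$, from Theorem \ref{gibbs-equilibrium}. Your explicit normalization remark and the spelled-out uniqueness step are just slightly more detailed bookkeeping of what the paper does implicitly.
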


\begin{proof}
  Let $\tilde \varphi$ be the minimizer of \eqref{Fenchel-Rockafellar equation} then
  \begin{equation}\label{mindual}
\int_X \tilde{\varphi} \,d\mu =
\sup_{\pi \in \Pi(\mu,\sigma)} \int_{X\times \Omega}  c  \, d\pi\, +\, H(\pi),
\end{equation}
hence
\begin{equation}\label{Pmu}
\sup_{\pi \in \Pi(\mu,\sigma)} \int_{X\times \Omega}  (c - \tilde{\varphi}) \, d\pi\, +\, H(\pi)=0,
\end{equation}
which implies that $P_{\mu}(c-\tilde{\varphi})=0$.
 Also, by Proposition \ref{uniqueminimizer}
we know that $P(c-\tilde{\varphi})=0$.
 Therefore, $P_{\mu}(c-\tilde{\varphi})= P(c-\tilde{\varphi})=0$.

Now, let $\pi_{\mu}$ be a plan   that attains the supremum in \eqref{Pmu}, which exists by Theorem \ref{dualidade}, then $\pi_{\mu}$ also attains the supremum in $P(c-\tilde{\varphi})=0$. Finally  using Theorem \ref{gibbs-equilibrium}, we see that $\pi_{\mu}=\pi_{c- \tilde{\varphi}}$, as $\pi_{c- \tilde{\varphi}}$ is the unique equilibrium plan for $c-\tilde{\varphi}$, and this implies $\pi_{c- \tilde{\varphi}}\in\Pi(\mu,\sigma)$ and that $\pi_{c- \tilde{\varphi}}$ is the unique maximizer of \eqref{Pmu}, and hence of \eqref{mindual}.
\end{proof}

\begin{proof}\textit{(of Theorem \ref{varprinc})}
It follows by Theorem \ref{dualidade} and Proposition \ref{uniqueminimizer} that there exists a unique $\tilde \varphi$ such that
\begin{equation}\label{mindual2}
\int_X \tilde{\varphi} \,d\mu =\inf_{\varphi:P(c-\varphi)= 0} \int_X \varphi \, d\mu=
\sup_{\pi \in \Pi(\mu,\sigma)} \int_{X\times \Omega}  c  \, d\pi\, +\, H(\pi),
\end{equation}
and by Corollary \ref{corolmin}, we see that $\pi_{c- \tilde{\varphi}}$ is the unique maximizer of \eqref{mindual2}.
\end{proof}


\bigskip

\bigskip


 \bigskip

\bigskip

\section{The zero temperature limit} \label{zero}

In this section we show that the main result proved in \cite{LM} and discussed in the introduction can be obtained from the reasoning of the above section considering the zero temperature limit.

\begin{center}
{\bf Zero temperature for $\Pi(\cdot,\sigma)$}
\end{center}

Given a  Lipschitz potential $c$ and a real variable $\beta>0$, consider the potential $\beta \, c$. The parameter $\beta$ corresponds to the inverse of the temperature in the Thermodynamic Formalism.
We denote by $\lambda_{\beta}$ the main eigenvalue of $L_{\beta c}$ and by $h_{\beta }$ the main eigenfunction associate to $\lambda_{\beta}$ (we can suppose that $\min(h_\beta)=1$ for any $\beta$). Denote also by $\pi_\beta$ the equilibrium plan for $\beta c$.

We note that $h_\beta$ is the positive eigenfunction of the  Ruelle operator with potential
$$A_\beta(y)=\log\left(\sum_x e^{\beta c(x,y)}\right),$$
in the classical Thermodynamic Formalism
setting. Hence the Lipschitz constant of $\log(h_\beta)$ increase linearly with $\beta$ \cite{BCLMS}. From the Arzela-Ascolli Theorem $\frac{1}{\beta_n}\log(h_{\beta_n})$ converges for some sequence $\beta_n\to\infty$.
Let $$m = \sup_{\pi\in \Pi(\cdot,\sigma)}\int_{X\times \Omega} c(x,y)  \, d\pi.$$
Using Theorem \ref{gibbs-equilibrium} and that $0\leq H(\pi)\leq \log(\#X)+\log(d)$,
we have
\[ \beta m \leq \log(\lambda_\beta) \leq \beta m + \log(\#X)+\log(d)\]
and then $\lim_{\beta\to\infty} \frac{\log(\lambda_\beta)}{\beta}=m$.
By compactness we know that there exist convergent sub-sequences of $\pi_\beta$, $\beta\to\infty$.

Suppose that for some sequence $\beta_n$ we have $\frac{1}{\beta_n}\log(h_{\beta_n}) \to V$ and $\pi_{\beta_n}\to\pi_\infty$.
Applying the Laplace's Method (see \cite{BCLMS} and \cite{LMMS}) on the equation
\[\sum_x\sum_a e^{\beta c(x,ay) + \log(h_{\beta}(ay)) - \log(h_\beta(y)) - \log(\lambda_\beta)} = 1,\]
we conclude that
\[\sup_{x}\sup_{a} [\,c(x,ay) + V(ay) - V(y) - m] = 0, \,\, \,\, \,\, \forall\, y.\]
Let us prove
that $\pi_{\infty}$ is a maximizing measure for $c$:
analyzing  the equation
\[ \int_{X\times \Omega} \beta c(x,y) \, d\pi_\beta + H(\pi_\beta)= \sup_{\pi\in \Pi(\cdot,\sigma)}  \int_{X\times \Omega} \beta c(x,y)  \, d\pi + H(\pi),\]
 we conclude that (dividing by $\beta_n$, making $\beta_n\to\infty$ and using that $H$ is a bounded function)
\[ \int_{X\times \Omega} c(x,y) \, d\pi_\infty =\sup_{\pi\in \Pi(\cdot,\sigma)}\int_{X\times \Omega} c(x,y)  \, d\pi=m.\]
Now we prove the duality between the primal and dual problem:
\begin{theorem} $m$ is the smallest real number $\alpha$, such that, there exists a continuous function $S:\Omega \to\mathbb{R}$, satisfying
$c(x,y) +S(y) -S(\sigma(y)) -\alpha \leq 0$, for any $x\in X$ and $y\in\Omega$.

In this way, if we define $\Phi_c$ as the set of pair $(\alpha, S)$ such that $-\alpha +S(y)-S(\sigma(y)) \leq -c(x,y)$ then
\[\inf_{(\alpha,S)\in \Phi_c} \alpha = \sup_{\pi\in \Pi(\cdot,\sigma)}\int_{X\times \Omega} c(x,y)  \, d\pi\]
or, equivalently
\[\sup_{(\alpha,S)\in \Phi_c} -\alpha = \inf_{\pi\in \Pi(\cdot,\sigma)}\int_{X\times \Omega} -c(x,y)  \, d\pi.\]
\end{theorem}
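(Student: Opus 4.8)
The plan is to establish the claimed equality by proving the two inequalities separately: the inequality $m \le \inf$ by a one-line integration argument, and $\inf \le m$ by passing to the zero temperature limit, reusing the max-plus identity for $V$ already obtained above in this section. First, let $(\alpha,S) \in \Phi_c$, so that $c(x,y) \le \alpha - S(y) + S(\sigma(y))$ for all $(x,y) \in X \times \Omega$. Given any $\pi \in \Pi(\cdot,\sigma)$ with $y$-marginal $\nu$, integrate this inequality against $\pi$; since $\nu$ is $\sigma$-invariant we have $\int_{X\times\Omega} (S(y)-S(\sigma(y)))\, d\pi = \int_\Omega S\, d\nu - \int_\Omega S\circ\sigma\, d\nu = 0$, whence $\int_{X\times\Omega} c\, d\pi \le \alpha$. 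Taking the supremum over $\pi$ gives $m \le \alpha$, and therefore $m \le \inf_{(\alpha,S)\in\Phi_c}\alpha$.

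For the reverse inequality it suffices to exhibit a single admissible pair with $\alpha = m$, and the natural candidate is $(m,V)$, where $V$ is the function produced above by the zero temperature analysis: since the Lipschitz constant of $\log h_\beta$ grows at most linearly in $\beta$, Arzela-Ascoli yields a sequence $\beta_n \to \infty$ along which $\tfrac{1}{\beta_n}\log h_{\beta_n} \to V$ uniformly, with $V$ Lipschitz and in particular continuous. Laplace's method applied to the normalization identity $\sum_x\sum_a e^{\beta c(x,ay) + \log h_\beta(ay) - \log h_\beta(y) - \log\lambda_\beta} = 1$ gave
\[\sup_x \sup_a \big[\, c(x,ay) + V(ay) - V(y) - m \,\big] = 0 \qquad \text{for all } y \in \Omega,\]
hence in particular $c(x,ay) + V(ay) - V(y) - m \le 0$ for every $x \in X$, every symbol $a$ and every $y$. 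Now any $z \in \Omega$ can be written $z = ay$ with $a = z_0$ and $y = \sigma(z)$, so this reads $c(x,z) + V(z) - V(\sigma(z)) - m \le 0$ for all $x \in X$ and $z \in \Omega$; that is, $(m,V) \in \Phi_c$. Consequently $\inf_{(\alpha,S)\in\Phi_c}\alpha \le m$.

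Combining the two inequalities gives $\inf_{(\alpha,S)\in\Phi_c}\alpha = m = \sup_{\pi\in\Pi(\cdot,\sigma)}\int_{X\times\Omega} c\, d\pi$. Moreover the value $\alpha = m$ is itself attained inside $\Phi_c$ (by $(m,V)$), while by the first inequality no $\alpha < m$ is admissible; thus $m$ is literally the smallest real number $\alpha$ for which a continuous $S$ as in the statement exists, which is the first assertion. The equivalent dual formulation follows at once by changing sign throughout, using $\inf_\pi \int (-c)\, d\pi = -\sup_\pi \int c\, d\pi$; and the primal supremum is in fact a maximum, realized by any accumulation point $\pi_\infty$ of the equilibrium plans $\pi_\beta$, as shown above.

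Essentially all the content of the theorem is in the second inequality, and it has essentially been carried out already in this section. The only genuinely delicate points are (i) the uniform convergence $\tfrac{1}{\beta_n}\log h_{\beta_n} \to V$ together with the Lipschitz (hence continuity) control on $V$, which rests on the linear-in-$\beta$ bound for the Lipschitz constants of the Ruelle eigenfunctions of $A_\beta$, and (ii) the correct use of Laplace's method to turn the ``sum of exponentials equals one'' identity into the pointwise max-plus identity displayed above. Once that identity is in hand, the reduction to $(m,V) \in \Phi_c$ is just the substitution $z = ay$, $y=\sigma(z)$, and the remaining convex-duality bookkeeping is routine.
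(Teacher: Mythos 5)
Your proposal is correct and follows essentially the same route as the paper: the easy direction by integrating the admissibility inequality against any $\pi\in\Pi(\cdot,\sigma)$ and using $\sigma$-invariance of the $y$-marginal, and the other direction by observing that the Laplace-method identity $\sup_x\sup_a[c(x,ay)+V(ay)-V(y)-m]=0$ from the zero-temperature analysis makes $(m,V)$ an admissible pair. Your explicit substitution $z=ay$, $y=\sigma(z)$ just spells out a step the paper leaves implicit.
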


\begin{proof}
From the above arguments we know that
\[\sup_{x}\sup_{a} [\,c(x,ay) + V(ay) - V(y) - m ]= 0, \,\, \,\, \,\, \forall\, y\]
proving that $m$ is a possible number. In order to show that $m$ is the smallest possible  number, fix $\alpha$ and $S$ such that $c(x,y) +S(y) -S(\sigma(y)) -\alpha \leq 0$ for any $x\in X,y\in\Omega$. Then
\[\int_{X\times \Omega} c\, d\pi \leq \alpha, \,\,\,\,\, \forall \, \pi \in \Pi(\cdot,\sigma).\]
In this way
\[m=\sup_{\pi\in\Pi(\cdot,\sigma)} \int_{X\times \Omega} c\, d\pi \leq \alpha.\]
\end{proof}

\begin{center}
{\bf Zero temperature for $\Pi(\mu,\sigma)$}
\end{center}

Now we consider the analogous  problem over $\Pi(\mu,\sigma)$. For each $\beta>0$, given the potential $\beta c$, by Theorem \ref{varprinc}, there exists a unique function $\varphi_\beta(x)$ such that $P(\beta c-\varphi_\beta)=0$ and
\[ \int_X \varphi_\beta\, d\mu = \sup_{\pi\in\Pi(\mu,\sigma)} \int_{X\times \Omega} \beta c\, d\pi + H(\pi).\]
Let $h_\beta$ be the eigenfunction associate to the eigenvalue $1$ for $L_{\beta c- \varphi_\beta}$.%
We suppose $\min(h_\beta)=1$.
Let $\pi_\beta \in \Pi(\mu,\sigma)$ be the equilibrium plan for $\beta c - \varphi_\beta$.

Now we want to prove that the sequences $\frac{\varphi_\beta}{\beta}$ and $\frac{h_\beta}{\beta}$ converge in subsequence, when $\beta\to\infty$.
 To do this we need show that the Lipschitz constant of $\beta c-\varphi_\beta$ increases linearly with $\beta$.

 According to \eqref{infpres}  we can add a constant to $\varphi_\beta$ and take $\varphi_\beta^{*}$ a minimizer of
 \[ \int_X \varphi\, d\mu + P(\beta c-\varphi),\]
 such that $\varphi_\beta^{*}(0)=0$, we have that $ \varphi_\beta =  \varphi_\beta^{*}+P(\beta c - \varphi_\beta^{*})$.

As in the proof  of Proposition \ref{uniqueminimizer} we suppose $X=\{0,...,k\}$, we consider for each 
$\beta c $ the  function $F(\varphi)=-\int_X \varphi\,d\mu +P(\beta c+\varphi)$ 
 and we see, by the same arguments, that $-\varphi_{\beta}^{*}$ is a minimizer of $F$, in particular $F(-\varphi_{\beta}^{*})\leq F(0)=P(\beta c)$.
Also, there exists $\epsilon>0$ such that $F(t\varphi)>t\epsilon+\min(\beta c)$ for any $\varphi\in \mathbb{S}^{k-1}$. Therefore, if $t> \frac{P(\beta c)-\min(\beta c)}{\epsilon}$ then $F(t\varphi)>F(0)\geq F(-\varphi_{\beta}^{*}),$ for any $\varphi\in \mathbb{S}^{k-1}$. If we write $-\varphi_\beta^{*}=\| \varphi_\beta^{*}\| \tilde\varphi_\beta^{*} ,$ with $\tilde\varphi_\beta^{*}\in \mathbb{S}^{k-1} $, we see that
$\|\varphi_\beta^{*}\| \leq \frac{P(\beta c)-\min(\beta c)}{\epsilon} $ and then  $\frac{\|\varphi_\beta^{*}\|}{\beta} \leq \frac{P(\beta.c)}{\beta \epsilon}-\frac{\min(c)}{\epsilon} $. From the arguments above we know that $\frac{P(\beta c)}\beta$ converges to $\sup_{\pi\in\Pi(\cdot,\sigma)}\int_{X\times \Omega} c\, d\pi$. Then, there exists a constant $K$ such that $\frac{\|\varphi_\beta^{*}\|}{\beta} \leq K$.

Now we claim that $\frac{\varphi_\beta}{\beta}$ is bounded. Indeed, as  $\frac{\varphi_\beta}{\beta} = \frac{\varphi_\beta^{*}}{\beta}+\frac{P(\beta c - \varphi_\beta^{*})}{\beta}$ we have
$$\frac{\|\varphi_\beta\|}{\beta}\leq \frac{2\,\|\varphi_\beta^{*}\|}{\beta} + \max{c} +\log(d)+\log(k+1)\leq K_2.$$

Using the estimative above and the fact that $X$ is a finite set we see that the Lipschitz constant of $\frac{\varphi_\beta}{\beta}$ is uniformly bounded, hence the Lipschitz constant of $c - \frac{\varphi_\beta}{\beta}$ is bounded.

It follows that for some subsequence, there exists the limit of $\frac{\varphi_\beta}{\beta}$. In the same way we get a control of the Lipschitz constants of the eigenfunctions $h_\beta(y)$ to $L_{\beta c -\varphi_\beta} $. Applying the Arzela-Ascoli Theorem we obtain the existence of the limit on $C^0$ norm of $\frac{h_\beta} \beta$ for some subsequence $\beta_n\to\infty$.

Suppose that for some sequence $\beta_n$ we have: $\frac{1}{\beta_n}\log(h_{\beta_n})(y) \to \tilde{V}(y)$, $\frac{\varphi_{\beta_n}(x)}{\beta_n} \to \tilde{m}(x)$ and $\pi_{\beta_n}\to \pi_\infty$. Then, $ \pi_\infty \in \Pi(\mu,\sigma)$, and
\begin{equation}\label{subaction}  \sup_{x}\sup_{a} [\,c(x,ay) +  \tilde{V}(ay) - \tilde{V}(y) - \tilde{m}(x)] = 0, \,\, \,\, \,\, \forall\, y.\end{equation}

On the other hand, from
\[ \int_{X\times \Omega} \beta c(x,y) -\varphi_\beta(x) \, d\pi_\beta + H(\pi_\beta)= \sup_{\pi\in \Pi(\mu,\sigma)}  \int_{X\times \Omega} \beta c(x,y) -\varphi_\beta(x)  \, d\pi + H(\pi),\]
we conclude that
\[ \int_{X\times \Omega} c(x,y) \, d\pi_\infty - \int_X \tilde{ m}(x) \, d\mu =\sup_{\pi\in \Pi(\mu,\sigma)}\int_{X\times \Omega} c(x,y)  \, d\pi - \int_X \tilde{m}(x) \, d\mu.\]
Therefore,
\[\int_{X\times \Omega} c(x,y) \, d\pi_\infty =\sup_{\pi\in \Pi(\mu,\sigma)}\int_{X\times \Omega} c(x,y)  \, d\pi,\]
which  means that $\pi_\infty$ is an optimal plan for $c$ over $\Pi(\mu,\sigma)$.

Now we prove the duality between the primal and dual problem \cite{LM}:
\begin{theorem} Let $\Phi$ be the set of functions $\alpha(x)$, such that, there exists a function $S(y)$ satisfying: $c(x,y) +S(y) -S(\sigma(y)) -\alpha(x) \leq 0$, for any $x\in X$ and $y\in\Omega$. Then,
\[\inf_{\alpha \in \Phi} \int_X \alpha(x) \, d\mu  = \sup_{\pi\in \Pi(\mu,\sigma)}\int_{X\times \Omega} c(x,y)  \, d\pi.\]
Moreover, suppose that for some sequence $\beta_n$ we have: $\frac{1}{\beta_n}\log(h_{\beta_n}) \to \tilde{V}$, $\frac{\varphi_{\beta_n}}{\beta_n} \to \tilde{m}$ and $\pi_{\beta_n}\to \pi_\infty$.
Then, the infimum on the left hand side is attained for $\alpha(x)=\tilde m(x)$ and $S(y)=\tilde V(y)$. The supremum  on the right hand side is attained in $\pi_\infty$. The function $\tilde V$ is a calibrated subaction.
\end{theorem}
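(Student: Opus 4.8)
The plan is to establish weak duality first, then to identify $(\tilde m,\tilde V)$ and $\pi_\infty$ as the extremizers using facts already obtained in the discussion above, and finally to close the duality gap by passing to the zero-temperature limit in the identity that characterizes the equilibrium plans $\pi_\beta$. For weak duality, fix $\alpha\in\Phi$ together with a continuous witness $S$, so that $c(x,y)+S(y)-S(\sigma(y))-\alpha(x)\le 0$ for all $(x,y)$, and integrate this against an arbitrary $\pi\in\Pi(\mu,\sigma)$. Since the $y$-marginal of $\pi$ is $\sigma$-invariant the telescoping terms cancel, $\int S(y)\,d\pi=\int S(\sigma(y))\,d\pi$, and since the $x$-marginal of $\pi$ is $\mu$ we have $\int\alpha(x)\,d\pi=\int_X\alpha\,d\mu$; hence $\int_{X\times\Omega} c\,d\pi\le\int_X\alpha\,d\mu$, and taking the supremum over $\pi$ and the infimum over $\alpha$ yields $\sup_{\pi\in\Pi(\mu,\sigma)}\int_{X\times\Omega} c\,d\pi\le\inf_{\alpha\in\Phi}\int_X\alpha\,d\mu$.

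Next I would collect the candidate extremizers. By \eqref{subaction}, obtained above via the Laplace method, one has $c(x,w)+\tilde V(w)-\tilde V(\sigma(w))-\tilde m(x)\le 0$ for every $(x,w)$, so $\tilde m\in\Phi$ with witness $S=\tilde V$; moreover for each $y$ the (finite) supremum in \eqref{subaction} is attained at some pair $(x,a)$, which is exactly the statement that $\tilde V$ is a calibrated subaction for $(c,\tilde m)$. It was also shown above that $\pi_\infty\in\Pi(\mu,\sigma)$ and $\int_{X\times\Omega} c\,d\pi_\infty=\sup_{\pi\in\Pi(\mu,\sigma)}\int_{X\times\Omega} c\,d\pi$.

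To close the gap I would start from the identity $\int_X\varphi_\beta\,d\mu=\int_{X\times\Omega}\beta c\,d\pi_\beta+H(\pi_\beta)$, divide by $\beta$, and let $\beta=\beta_n\to\infty$: the entropy is bounded by $\log(\#X)+\log d$, so $H(\pi_{\beta_n})/\beta_n\to 0$, while $\varphi_{\beta_n}/\beta_n\to\tilde m$ (convergence on the finite set $X$, hence uniform) and $\pi_{\beta_n}\to\pi_\infty$ weakly-$*$ with $c$ continuous on the compact set $X\times\Omega$, giving $\int_X\tilde m\,d\mu=\int_{X\times\Omega} c\,d\pi_\infty$. Chaining this with the two previous paragraphs,
\[\sup_{\pi\in\Pi(\mu,\sigma)}\int_{X\times\Omega} c\,d\pi=\int_{X\times\Omega} c\,d\pi_\infty=\int_X\tilde m\,d\mu\ \ge\ \inf_{\alpha\in\Phi}\int_X\alpha\,d\mu\ \ge\ \sup_{\pi\in\Pi(\mu,\sigma)}\int_{X\times\Omega} c\,d\pi,\]
so every inequality is an equality. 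This proves the duality identity, shows the infimum is attained at $\alpha=\tilde m$, $S=\tilde V$, and the supremum at $\pi_\infty$; the calibration of $\tilde V$ was noted above, and the subaction identity \eqref{intigual} is exactly $\int_{X\times\Omega} c\,d\pi_\infty=\int_X\tilde m\,d\mu$.

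The main obstacle is not this final chain of equalities but the inputs preceding it: the $\beta$-uniform control of the Lipschitz constants of $\varphi_\beta/\beta$ and of $\tfrac{1}{\beta}\log h_\beta$ which, via the Arzela--Ascoli Theorem, produces the limit functions $\tilde m$ and $\tilde V$, together with the Laplace-method justification of \eqref{subaction}. Once those are in hand — and they were carried out in the discussion above — the theorem follows from weak duality plus a single passage to the limit.
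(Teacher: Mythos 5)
Your proposal is correct and follows essentially the same route as the paper: weak duality by integrating the constraint against any $\pi\in\Pi(\mu,\sigma)$, then passing to the limit $\beta_n\to\infty$ in the variational identity $\int_X\varphi_\beta\,d\mu=\int\beta c\,d\pi_\beta+H(\pi_\beta)$ to get $\int_X\tilde m\,d\mu=\int c\,d\pi_\infty=\sup_\pi\int c\,d\pi$, using \eqref{subaction} to see $\tilde m\in\Phi$ with witness $\tilde V$. Your explicit chain of inequalities closing the gap, and your remark that calibration follows because the supremum in \eqref{subaction} is over a finite set and hence attained, only make explicit what the paper leaves implicit.
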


\begin{proof}
 Given $\alpha$ and $S$ such that $c(x,y) +S(y) -S(\sigma(y)) -\alpha(x) \leq 0$, for any $x\in X,y\in\Omega$, we get  the inequality
\[\int_{X\times \Omega} c(x,y)\, d\pi \leq \int_X \alpha(x)\, d\mu ,\,\,\,\, \forall \pi \in \Pi(\mu,\sigma).\]
 Therefore,
\[\sup_{\pi\in\Pi(\mu,\sigma)} \int_{X\times \Omega} c(x,y)\, d\pi \leq \int_X \alpha(x)\, d\mu.\]
On the other hand, from the Variational Principle for $\Pi(\mu,\sigma)$ we get the equation
\[ \int_X \frac{\varphi_\beta(x)}{\beta} \, d\mu = \int_{X\times \Omega}  c(x,y) \, d\pi_\beta + \frac{1}{\beta}H(\pi_\beta).\]
Then, when $\beta_n\to+\infty$, we have that $\tilde m(x)\in \Phi$ and
\[\int_X \tilde m(x) \, d\mu = \int_{X\times \Omega} c(x,y)\, d\pi_{\infty}=\sup_{\pi\in \Pi(\mu,\sigma)}\int_{X\times \Omega} c(x,y)  \, d\pi.\]
\end{proof}

\section{Appendix} \label{App}

In the appendix we will give the proofs os some technical results.
\bigskip

\noindent
{\it Proof of Proposition \ref{Gi}.}
By the Schauder-Tychonov fixed point theorem we can find a plan $\pi_c \in P(X\times \Omega)$ such that $\hat L_c^*(\pi_c) = \pi_c$, for such normalized $c$. We note that $\pi_c\in\Pi(\cdot,\sigma)$ because
\begin{eqnarray*}
&&\int_{X\times \Omega} \psi\circ\sigma(y) \, d\pi_c\,=\int_{X\times \Omega} \psi\circ\sigma(y) \, d\hat L_c^*(\pi_c)\\
&&=\,\int_{X\times \Omega} \left(\sum_x\sum_{\sigma(w)=y} e^{c(x,w)} \psi\circ\sigma(w) \right) \, d\pi_c=\,\int_{X\times \Omega} \psi(y) \, d\pi_c \, .
\end{eqnarray*}

Now we show that the $y$-marginal of $\pi_c$ is $\nu_c$. Denote by $\tilde{\nu}_c$ the $y$-marginal of $\pi_c$.
 Then, for any fixed  $\psi\in C(\Omega)$,
 \begin{eqnarray*}
 \tilde{\nu}_c(\psi)&=&\pi_c(\psi)=\hat L_c^*(\pi_c)(\psi(y))=\int_{X\times \Omega} \left(\sum_x\sum_{\sigma(w)=y} e^{c(x,w)}\psi(w)\right) \, d\pi_c\\
 &=&\int_{ \Omega} \left(\sum_x\sum_{\sigma(w)=y} e^{c(x,w)}\psi(w)\right) \, d\tilde{\nu}_c =L_c^*(\tilde{\nu}_c)(\psi).
\end{eqnarray*}
Therefore, $\tilde{\nu}_c={\nu}_c$, because $\nu_c$ is the unique fixed point of $L_c^*$ .

The fixed point $\pi_c$ for $\hat L_c^*$  is unique because it satisfies, for any $u(z,y)$,
\begin{eqnarray*}
\pi_c(u)&=& \hat L_c^*(\pi_c)(u)\,=\,\int_{X\times \Omega} \Big(\sum_x\sum_{\sigma(w)=y} e^{c(x,w)} u(x,w) \Big) \, d\pi_c(z,y)\\
&=&\,\int_{ \Omega} \Big(\sum_x\sum_{\sigma(w)=y} e^{c(x,w)} u(x,w) \Big) \, d\nu_c(y).
\end{eqnarray*}

Finally, for a  fixed $(x_0,y_0)$ and an open set of the form $(x_0,A)$, where $A$ is a cylinder containing $y_0$ we have
\[ \pi_c((x_0,A))=\,\int_{\Omega} \Big(\sum_{\sigma(w)=y} e^{c(x_0,w)} \textbf{I}_A(w) \Big) \, d\nu_c(y)>0,\]
because $\nu_c$ is positive on cylinders and  $e^{c(x_0,w)}$ is bounded below.
In this way we show that the support of $\pi_c$ is the full set $X\times \Omega$.
%
%

$\hfill \Box$


\medskip

Now we will explaining how the Jacobian of a plan $\pi$ is defined.
We will adapt the reasoning of \cite{PY} to our setting.

Let $\cal{B}$ be the Borel sigma-algebra over $X\times\Omega$. Moreover, let $\sigma^{-1}(\cal{B})$  be the sigma algebra generated by cylinders of the form $[\cdot,\cdot \,y_1...y_n], n=1,2...$ where
\[[\cdot,\cdot \,y_1...y_n]=\{(x,(w_0,w_1,...))\in X\times \Omega:w_1=y_1,...,w_n=y_n\}.\]
Remember that for each $(x,a)\in X\times \{1,...,d\}$,  $[x,a]=\{(z,(w_0,w_1,...)):z=x,w_0=a\}$.

Given a plan $\pi$ with $y$-marginal $\nu$ we define for each $(x,a)$ the measure $\pi^{x,a}$ over $\sigma^{-1}(\cal{B})$  by the rule $\pi^{x,a}(A) =\pi([x,a]\cap A)$. Clearly $\pi^{x,a} \ll\pi$ then, from the Radon-Nikodym Theorem, there exists a function
\[E(\mathbb{I}_{[x,a]}\,|\,\sigma^{-1}(\mathcal{B}))
:=\frac{d \pi^{x,a}}{d \pi} \in L^{1}(X\times\Omega,\sigma^{-1}(\cal{B}),\pi),\]
which is the conditional expectation of $\mathbb{I}_{[x,a]}$ given
$\sigma^{-1}(\mathcal{B})$.

In the same way, for each  $n$, we consider $\mathcal{B}_n$ which is the smallest sigma-algebra containing the cylinders of the form $[\cdot,\cdot \,y_1...y_n]$. For each $(x,a)$ let $\pi_n^{x,a}$ over $\mathcal{B}_n$ be defined by $\pi_n^{x,a}(A) =\pi([x,a]\cap A)$. Applying again the Radon-Nikodym Theorem we get a function
\[E(\mathbb{I}_{[x,a]}\,|\,\mathcal{B}_n):=\frac{d\pi_n^{x,a}}{d\pi} \in L^{1}(X\times\Omega,\mathcal{B}_n,\pi).\]
Note that
\[\int_{X\times\Omega} \mathbb{I}_{[\cdot,\cdot \,y_1...y_n]} \,d\pi_n^{x,a} = \int_{X\times\Omega} \mathbb{I}_{[x,ay_1...y_n]} \,d\pi = \pi([x,ay_1...y_n]),\]
and
\[\int_{X\times\Omega} \mathbb{I}_{[\cdot,\cdot \,y_1...y_n]} \,d\pi_n^{x,a} = \int_{X\times\Omega} \mathbb{I}_{[\cdot,\cdot \,y_1...y_n]}E(I_{[x,a]}\,|\,\mathcal{B}_n) \,d\pi.\]
Then, using the fact that
$E(\mathbb{I}_{[x,a]}\,|\,\mathcal{B}_n)$ is constant on the set $[\cdot,\cdot \,y_1...y_n]$,
 we get
\[ E(\mathbb{I}_{[x,a]}\,|\,\mathcal{B}_n)(x_0,(y_0,y_1,...,y_n,...)) = \frac{\pi([x,ay_1...y_n])}{\pi([\cdot,\cdot \,y_1...y_n])} = \frac{\pi([x,ay_1...y_n])}{\nu([y_1...y_n])}.\]

Let $$J_{\pi}^{n}:=\sum_{x,a}\mathbb{I}_{[x,a]}E(\mathbb{I}_{[x,a]}\,|\,\mathcal{B}_n),$$
then, for $(x_0,(y_0,y_1,...))\in X\in \Omega$,
$$J_{\pi}^{n}(x_0,y) = \frac{\pi([x_0,y_0y_1...y_n])}{\nu([y_1...y_n])}.  $$
From the increasing martingale theorem, when $n\to\infty$,
\[E(\mathbb{I}_{[x,a]}\,|\,\mathcal{B}_n) \to E(\mathbb{I}_{[x,a]}\,|\,\sigma^{-1}(\mathcal{B}))\]
in $L^{1}(X\times\Omega,\sigma^{-1}(\mathcal{B}),\pi)$ and in a.e. $\pi$.
Then, by summing over $(x,a)$ we get  a function $J_\pi$ well defined $\pi$ a.e., such that,
\[J_{\pi}^{n} \to J_{\pi}\]
in $L^{1}(X\times\Omega, \mathcal{B},\pi) $ and a.e. $\pi$.
Note that:
\[J_\pi = \sum_{x,a}\mathbb{I}_{[x,a]}E(\mathbb{I}_{[x,a]}\,|\,\sigma^{-1}(\mathcal{B}))\]
in $L^{1}(X\times\Omega, \sigma^{-1}(\mathcal{B}),\pi).$

\bigskip

Following the terminology of \cite{PP} and \cite{PY} we mention that the information function is defined by
$$I:=-\log(J_\pi) = -\sum_{x,a}\mathbb{I}_{[x,a]}\log(E(\mathbb{I}_{[x,a]}\,|\,\sigma^{-1}(\mathcal{B}))).$$

In this case the entropy of $\pi$ is
\[H(\pi):=\int_{X\times\Omega} -\log(J_\pi)\, d\pi = - \int_{X\times\Omega} \sum_{x,a}\mathbb{I}_{[x,a]}\log(E(\mathbb{I}_{[x,a]}\,|\,\sigma^{-1}(\mathcal{B})))\, d\pi.\]
The number $H(\pi)$ is finite.

This is the end of the basic considerations about the concepts of Jacobian and entropy of a plan.

\bigskip

\noindent
{\it Proof of Theorem \ref{gibbs-equilibrium}.}
Let $\lambda_c$ be the main eigenvalue and $h_c$ the positive eigenfunction of $L_c$, given by Proposition \ref{RPF}, then $\overline{c}(x,y):= c(x,y) +\log(h_c)(y) - \log(h_c\circ\sigma)(y) - \log(\lambda_c)$ is the normalized cost associated to $c$. As $h_c$ depends only on $y$, for any $\pi \in \Pi(\cdot,\sigma)$ we have that
$\displaystyle -\int_{X\times \Omega} \bar c\, d\pi =-\int_{X\times \Omega} c\, d\pi +\log(\lambda_c).$ Hence, by the definition of  entropy, we get
\begin{eqnarray*}
P(c) &=& \sup_{\pi \in \Pi(\cdot,\sigma)} \left(\int_{X\times \Omega} c\, d\pi + H(\pi)\right)\\
&\leq&  \sup_{\pi \in \Pi(\cdot,\sigma)} \left(\int_{X\times \Omega} c\, d\pi -\int_{X\times \Omega} \bar c\, d\pi \right) = \log(\lambda_c).
\end{eqnarray*}

Now we show the other inequality: let  $\pi_{\bar{c}}$ be the Gibbs plan  to  $\bar c$.
 Then, by Lemma \ref{L2},
 $\displaystyle H(\pi_{\bar{c}})= - \int_{X\times \Omega} \bar c \,d\pi_{\bar{c}}=-\int_{X\times \Omega} c   \,d\pi_{\bar{c}}+ \log(\lambda_c).$

Therefore,
\[P(c) = \sup_{\pi \in \Pi(\cdot,\sigma)} \left(\int_{X\times \Omega} c\, d\pi + H(\pi)\right) \geq   \left(\int_{X\times \Omega} c \, d\pi_{\bar{c}} + H(\pi_{\bar{c}})\right)=\log(\lambda_c).\]

In order to prove that the equilibrium plan is unique let us suppose that $c$ is normalized. Then $P(c)=0$ and for all $\pi\in\Pi(\cdot,\sigma)$ we have
$$\int_{X\times \Omega} c\,d\pi-\int_{X\times \Omega} \log (J_{\pi })\,d\pi=\int_{X\times \Omega} c\,d\pi+H(\pi)\leq 0 ,  $$
with equality, if and only if, $c=\log(J_\pi)$,  by Lemma \ref{L1a}.
Suppose now $\pi$ is such that $\int_{X\times \Omega} c\,d\pi+H(\pi)=0$.
Using Lemma \ref{L01}, for every $w\in C(X,\Omega)$,
\[\int_{X\times \Omega} \sum_{x}\sum_{a} J_\pi(x,ay)w(x,ay)\,d\pi = \int_{X\times \Omega} w\, d\pi,\] hence, as $J_{\pi}=e^c$,
\[\int_{X\times \Omega} \sum_{x}\sum_{a} e^{c(x,ay)}w(x,ay)\,d\pi =\int_{X\times \Omega} \hat L_c (w)d\pi= \int_{X\times \Omega} w\, d\pi.\]
This shows that $\hat{L}^*_{{c}} (\pi)=\pi.$
Finally, from the uniqueness of the Gibbs plan given by Proposition \ref{Gi}, we get that $\pi=\pi_{{c}}.$

$\hfill \Box$

Now we will prove some other results that we used before.

\medskip

\medskip



\noindent
{\it Proof of Lemma \ref{L01}.}
We need to prove that, for every $w\in C(X,\Omega)$,
$\pi \in \Pi(\cdot,\sigma)$,
\[\int_{X\times \Omega} \sum_{x}\sum_{a} J_\pi(x,ay)w(x,ay)\,d\pi = \int_{X\times \Omega} w\, d\pi.\]

First we show that if $w$  is constant in the cylinders of the form $[x,y_0...y_n]$, then
\[\int_{X\times \Omega} \sum_{x}\sum_{a} J_\pi^{n}(x,ay)w(x,ay)\,d\pi = \int_{X\times \Omega} w\, d\pi.\]

Consider a function
$w_n= \mathbb{I}_{[i,j_0j_1...j_n]}$. Then,
\begin{eqnarray*}
&&\int_{X\times \Omega} \sum_{x}\sum_{a} J_\pi^{n}(x,ay)w_n(x,ay)\,d\pi \\
&&=\int_{X\times \Omega} \sum_{x}\sum_{a} \frac{\pi([x,ay_0...y_{n-1}])}{\nu([y_0...y_{n-1}])} \mathbb{I}_{[i,j_0j_1...j_n]}(x,ay)\,d\nu(y)\\
&&=\pi([i,j_0j_1...j_n]) = \int_{X\times \Omega} w_n\, d\pi.
\end{eqnarray*}
From linearity arguments we conclude the first part of the Lemma.

In order to prove the second part of the Lemma we take a function  $w_l= \mathbb{I}_{[i,j_0j_1...j_l]}$. Then, using the first part of the Lemma, we obtain
\begin{eqnarray*}\int_{X\times \Omega} \sum_{x}\sum_{a} J_\pi(x,ay)w_l(x,ay)\,d\pi&= &\lim_{n\to\infty}\int_{X\times \Omega} \sum_{x}\sum_{a} J_\pi^{n}(x,ay)w_l(x,ay)\,d\pi\\
&= &\int_{X\times \Omega} w_l\, d\pi,
\end{eqnarray*}
where we use that, if $n\geq l$, $w_l$ is also constant in the cylinder of the form $[x,y_0...y_n]$.

From linearity arguments and using the fact that the functions which are constant in cylinders of length $l=1,2,3,...$ are dense in $C(X,\Omega)$ we conclude the proof.

$\hfill \Box$

\medskip
\medskip

\medskip

The Proof of Lemma \ref{L1a} will require the following:


\begin{lemma} \label{Jacap}
If $b$ is a normalized potential which is constant on cylinders of the form $[x,y_0...y_n]$, then
\[ - \int_{X\times \Omega} \log(J_\pi^{n}) \, d\pi \leq - \int_{X\times \Omega} b \, d\pi.\]
Furthermore, there exists a family of normalized potentials $b_\epsilon$ such that
\[- \int_{X\times \Omega} \log(J_\pi^{n}) \, d\pi =\lim_{\epsilon\to 0} - \int_{X\times \Omega} b_\epsilon \, d\pi.\]
\end{lemma}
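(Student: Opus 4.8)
The plan is to reduce both assertions to finite-dimensional computations, exploiting that $J_\pi^{n}$ and every potential appearing in the statement factor through the finite partition of $X\times\Omega$ into the cylinders $[x,y_0\dots y_n]$. The inequality is then a Gibbs (Jensen) estimate, and the approximation statement is obtained by a small regularization of $\log J_\pi^{n}$.

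For the inequality, I would put $p(x,y_0\dots y_n)=\pi([x,y_0\dots y_n])$, so that $J_\pi^{n}(x,y)=p(x,y_0\dots y_n)/\nu([y_1\dots y_n])$ and, because $b$ is constant on each $[x,y_0\dots y_n]$, both integrals are finite sums over these cylinders. Write
\[\int_{X\times\Omega}\!\big(b-\log J_\pi^{n}\big)\,d\pi=\!\!\sum_{p(x,y_0\dots y_n)>0}\!\! p(x,y_0\dots y_n)\,\log\!\left(\frac{e^{b(x,y_0\dots y_n)}\,\nu([y_1\dots y_n])}{p(x,y_0\dots y_n)}\right),\]
and apply Jensen's inequality (concavity of $\log$) with respect to the probability vector $\big(p(x,y_0\dots y_n)\big)$; this bounds the right-hand side by $\log\!\big(\sum_{x,y_0,\dots,y_n}e^{b(x,y_0\dots y_n)}\nu([y_1\dots y_n])\big)$. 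Two sum rules then close the estimate: since $b$ depends only on $x$ and the first $n+1$ coordinates, the normalization $\sum_x\sum_a e^{b(x,ay)}=1$ gives $\sum_x\sum_{y_0}e^{b(x,y_0y_1\dots y_n)}=1$ for every $(y_1,\dots,y_n)$, while $\sum_{y_1\dots y_n}\nu([y_1\dots y_n])=1$; hence the bound is $\log 1=0$, that is, $-\int\log J_\pi^{n}\,d\pi\le-\int b\,d\pi$.

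For the approximation, I would first note that $\log J_\pi^{n}$ is, up to finiteness issues, already normalized: using that $\pi$ has $y$-marginal $\nu$ and that $\nu$ is $\sigma$-invariant (valid since $\pi\in\Pi(\cdot,\sigma)$), one gets $\sum_x\sum_a J_\pi^{n}(x,ay)=1$ whenever $y\in\operatorname{supp}(\nu)$; extending $J_\pi^{n}$ by the value $1/(\#X\cdot d)$ on the $\pi$-null set of symbols $(x,y_0\dots y_n)$ with $\nu([y_1\dots y_n])=0$ makes this identity hold for every $(y_1,\dots,y_n)$. Then for $\epsilon>0$ set
\[g_\epsilon(x,y_0\dots y_n)=\frac{J_\pi^{n}(x,y_0\dots y_n)+\epsilon}{1+\epsilon\,\#X\cdot d},\qquad b_\epsilon:=\log g_\epsilon ,\]
which is everywhere positive, locally constant (hence Lipschitz), and normalized, because $\sum_x\sum_{y_0}\big(J_\pi^{n}(x,y_0\dots y_n)+\epsilon\big)=1+\epsilon\,\#X\cdot d$ equals the denominator. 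As $\epsilon\to0$ we have $b_\epsilon\to\log J_\pi^{n}$ pointwise on every cylinder of positive $\pi$-mass — on such a cylinder $J_\pi^{n}>0$, since $\pi([x,y_0\dots y_n])\le\nu([y_1\dots y_n])$ — and since $\int b_\epsilon\,d\pi$ and $\int\log J_\pi^{n}\,d\pi$ are sums over exactly these finitely many cylinders, $-\int b_\epsilon\,d\pi\to-\int\log J_\pi^{n}\,d\pi$.

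The only slightly delicate points are the two cancellation identities, which are bookkeeping with the shift action on finite cylinders (the second genuinely using $\sigma$-invariance of $\nu$), and the fact that $\log J_\pi^{n}$ may equal $-\infty$ off the support of $\nu$ — precisely why one must perturb to $b_\epsilon$ rather than take $b=\log J_\pi^{n}$ directly. I expect neither to be a serious obstacle.
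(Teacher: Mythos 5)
Your proof is correct, but it is organized differently from the paper's in both halves. For the inequality, the paper works fiberwise in $y$: it applies Jensen's inequality to the weights $J_\pi^{n}(x,ay)$, which sum to one over $(x,a)$, and then uses the exchange identity $\int \sum_{x,a} J_\pi^{n}(x,ay)w(x,ay)\,d\pi=\int w\,d\pi$ for $w$ constant on cylinders of length $n$ (the first step in the proof of Lemma \ref{L01}); you instead apply a single global Jensen step with respect to the probability vector of cylinder masses $\pi([x,y_0\dots y_n])$ and close with the two sum rules, so you never need the exchange identity — only the normalization of $b$ and the fact (which does use $\sigma$-invariance of $\nu$) that $\pi([x,y_0\dots y_n])\le\nu([y_1\dots y_n])$, so that positive-mass cylinders carry a well-defined positive $J_\pi^{n}$. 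For the approximation, the paper perturbs within each column separately, assigning the $\pi$-null symbols the value $\log((\#B)\,\epsilon)$ and subtracting $(\#A)\,\epsilon$ from the positive ones (with the uniform value $-\log((\#X)\,d)$ on $\nu$-null columns); your $b_\epsilon=\log\bigl((J_\pi^{n}+\epsilon)/(1+\epsilon\,\#X\, d)\bigr)$, with $J_\pi^{n}$ extended by $1/(\#X\, d)$ on $\nu$-null columns, is a uniform additive regularization whose normalization is checked in one line and which avoids the case analysis over the sets $A$ and $B$; the passage to the limit is the same finite-sum argument in both. Each route buys something: yours is more self-contained and the normalization verification is simpler; the paper's fiberwise Jensen is exactly the computation that is reused for $J_\pi$ itself in the proof of Lemma \ref{L1a}, so it sets up that later argument.
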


\begin{proof}
Let us fix a normalized potential $b$ constant on cylinders of the form $[x,y_0...y_n]$.
The functions $u=\frac{e^{b}}{J_\pi^{n}}$ and $\log(J_\pi^{n})$ are well defined in  supp($\pi$).
Using Jensen inequality, we have
\begin{eqnarray*} 0 &=& \int_{X\times \Omega} \log\left(\sum_x\sum_a e^{b(x,ay)}\right)\, d\pi=\int_{X\times \Omega} \log \left(\sum_x\sum_a J_\pi^{n}(x,ay)u(x,ay)\right)\, d\pi\\
&\geq& \int_{X\times \Omega} \left(\sum_x\sum_a J_\pi^{n}(x,ay)\log(u(x,ay))\right)\, d\pi=\int_{X\times \Omega} \log(u(z,y))\, d\pi\\
&=& \int_{X\times \Omega} b(z,y) - \log(J_\pi^{n}(z,y))\, d\pi.
\end{eqnarray*}
This shows the first part of the lemma.

In order to show the second part, we consider for each cylinder $[y_1...y_n]$, such that, $\nu([y_1...y_n])>0$, the sets
\newline
$ A=A_{[y_1...y_n]}:=\{(x,a)\in X\times \{1,...,d\}\,:\,\pi([x,ay_1...y_n])=0\}$ and
\newline
$B=B_{[y_1...y_n]}:=\{(x,a)\in X\times \{1,...,d\}\,:\,\pi([x,ay_1...y_n])>0\}$.

Fixed $\epsilon>0$ sufficiently small, consider the potential $b_\epsilon$ defined by
\[ b_\epsilon([x,ay_1...y_n])=\left\{\begin{array}{ll} \log((\#B)\,\epsilon), & if\, (x,a)\in A\\ \log(J_\pi^{n} ([x,ay_1...y_n]) -(\#A)\,\epsilon), & if\,(x,a)\in  B \end{array}\right.\]
If $\nu([y_1,...,y_n])=0$, we define $b_\epsilon([x,ay_1...y_n])=-\log((\#X)\,d)$, for all $x\in X, a\in \{1,...,d\}\ $.
By construction we see that $b_\epsilon$ is a normalized potential. Indeed, take $z=(z_1,z_2,...)\in\Omega$, there are two cases:
\newline
 If $\nu([z_1...z_n])=0$, then $$\sum_x\sum_a e^{b_\epsilon(x,az)} =\sum_x\sum_a e^{b_\epsilon([x,az_1...z_n])} =\sum_x\sum_a e^{-\log((\#X)\,d)}=1.$$
 If $\nu([z_1...z_n])>0$, then 
 \begin{eqnarray*}&&\sum_x\sum_a e^{b_\epsilon(x,az)}=\sum_A e^{\log((\#B)\,\epsilon)} +\sum_B e^{\log(J_\pi^{n}([ x,az_1...z_n]) -(\#A)\,\epsilon)}\\
 &&= (\#A)(\#B)\,\epsilon + \left(\sum_B J_\pi^{n} ([x,az_1...z_n])\right)-(\#B)(\#A)\,\epsilon\\
 &&= \sum_B J_\pi^{n}([ x,az_1...z_n])=1.
\end{eqnarray*}
 When $\epsilon\to 0$,
 \begin{eqnarray*}
 \lim_{\epsilon\to 0} \int_{X\times \Omega} b_\epsilon \,d\pi &=& \lim_{\epsilon\to 0}\sum_{\nu([y_1...y_n])>0}\sum_{B_{[y_1...y_n]}} b_\epsilon([x,y_0y_1...y_n])\pi([x,y_0y_1...y_n])\\
 &=& \int_{X\times \Omega} \log(J_\pi^{n})\,d\pi.
\end{eqnarray*}

\end{proof}


\medskip
{\it Proof of Lemma \ref{L1a}.}
First, we need to prove that, if $b$ is a normalized potential and $\pi\in \Pi(\cdot,\sigma)$, then
\[0\leq  - \int_{X\times \Omega} \log(J_\pi) \, d\pi \leq - \int_{X\times \Omega} b \, d\pi,\]
with equality,  if and only if, $b=\log(J_\pi)$.



We know that $\log(J_\pi^{n})$ converges to $\log(J_\pi)$ a.e. ($\pi$). Following the Lemma 8.11 and Theorem 8.12 in \cite{PY} we conclude that $\log(J_\pi^{n})$ converges to $\log(J_\pi)$  in $L^{1}$ norm.

We claim  that, for all $b$ normalized
\[ - \int_{X\times \Omega} \log(J_\pi) \, d\pi \leq - \int_{X\times \Omega} b \, d\pi.\]
  Indeed, note that functions $u=\frac{e^{b}}{J_\pi}$ and $\log(J_\pi)$ are well defined a.e $\pi$ and that $\log$ is a strictly concave function, then by Jensen inequality we have
  \begin{eqnarray*}
  0 &=& \int_{X\times \Omega} \log\left(\sum_x\sum_a e^{b(x,ay)}\right)\, d\pi=\int_{X\times \Omega} \log \left(\sum_x\sum_a J_\pi(x,ay)u(x,ay)\right)\, d\pi\\
  &\geq&  \int_{X\times \Omega} \left(\sum_x\sum_a J_\pi(x,ay)\log(u(x,ay))\right)\, d\pi= \int_{X\times \Omega} \log(u(z,y))\, d\pi\\
  &=&\int_{X\times \Omega} b(z,y) - \log(J_\pi(z,y))\, d\pi,
\end{eqnarray*}
proving the claim.

In the case $\int_{X\times \Omega} b(z,y) - \log(J_\pi(z,y))\, d\pi=0$ we get that for $\pi$ a.e. $y$, the Jensen's inequality will be an equality \footnote{   $\,\displaystyle 0=\log \bigg(\sum_x\sum_a J_\pi(x,ay)u(x,ay)\bigg)\geq \sum_x\sum_a J_\pi(x,ay)\log(u(x,ay))$ is an equality iff $u(x,ay)=k(y)$ for all $x\in X ,a\in\{1,...,d\}$, hence $0=\log ( k(y)) $ this implies $k(y)=1$.}. Then, for $\pi$ a.e. $y$ we have $u(x,ay)$ is constant equal to $1$
. That is, for almost all $y$ we have that for any $x$ and $a$ the equality $\log J_\pi(x,ay)= b(x,ay)$  hold. Using Lemma \ref{L01} it follows that $\pi$ is the Gibbs plan for $b$, because of  the uniqueness assertion of  Proposition \ref{Gi}. From  Lemma \ref{L2} we get that  $J_\pi=e^b$, hence $\log J_\pi=b$.

Now, the final claim of Lemma \ref{L1a},
\[- \int_{X\times \Omega} \log(J_\pi) \, d\pi =\inf_{b\,normalized} - \int_{X\times \Omega} b \, d\pi.\]
is a consequence of the second part from Lemma \ref{Jacap}.


$\hfill \Box$

\medskip

\noindent
{\it Proof of Proposition \ref{entropia-prod}.}
 We need to prove that, given $\pi \in \Pi(\cdot,\sigma)$,  if the $x$-marginal of $\pi$ is a probability measure $\mu$ and the $y$-marginal of $\pi$ is an invariant measure $\nu$, then
\[H(\pi) \leq h(\mu) + h(\nu) .\]
Moreover, if $\pi = \mu\times\nu$, then $H(\pi) = h(\mu)+h(\nu)$.

We remember that the Kolmogorov entropy of $\nu$ satisfies
\[h(\nu) = \inf\left\{ - \int_{ \Omega} g(y)\, d\nu(y)\,: \, g \,\text{Lipschitz and}\, \sum_{\sigma(w)=y}e^{g(w)}=1\right\}.\]
Given $\epsilon>0$, let $g$ be a Lipschitz function satisfying $\sum_{\sigma(w)=y}e^{g(w)}=1$ and $- \int_{ \Omega} g(y)\, d\nu(y)< h(\nu)+\epsilon$. The potential $c(x,y)=g(y)+\log(\mu(x))$ is normalized and therefore
\begin{eqnarray*}
H(\pi) &\leq& -\int_{X\times \Omega}  g(y)+\log(\mu(x)) \, d\pi = -\int_{ \Omega}  g(y)\, d\nu(y) -\int_X  \log(\mu(x))\,d\mu(x)\\
&\leq& h(\nu)+h(\mu)+\epsilon.
\end{eqnarray*}
Taking $\epsilon\to 0$, we conclude the first part of the proof.

Now we suppose that $\pi=\mu\times\nu$. Then
\begin{eqnarray*}
H(\pi)&=&\lim_{n\to\infty}-\int_{X\times \Omega} \log\left(\frac{\pi([x,y_0...y_n])}{\nu([y_1...y_n])}\right)\,d\pi(x,y)\\
&=&\lim_{n\to\infty}-\int_{X\times \Omega} \log(\mu(x)) + \log\left(\frac{\nu([y_0...y_n])}{\nu([y_1...y_n])}\right)\,d\pi(x,y)\\
&=&-\sum_x \log(\mu(x))\mu(x) -\lim_{n\to\infty} \int_{ \Omega}\log\left(\frac{\nu([y_0...y_n])}{\nu([y_1...y_n])}\right)\,d\nu(y)\\
&=&h(\mu)+h(\nu).
\end{eqnarray*}
$\hfill \Box$



Now we will show the last proof that was missing.
\medskip

\noindent
{\it Proof of Proposition \ref{pressure-properties}. }
 Itens (a) and (b) are straightforward.

To prove item (c), i.e. the convexity of the pressure map, we suppose that $c= \lambda c_1+ (1-\lambda)c_2$. Given $\epsilon>0$, there exists $\pi_{\epsilon}$, such that,
\begin{eqnarray*}
  P(c)- \epsilon & \leq & \int_{X\times \Omega} c d\pi_{\epsilon} + H(\pi_{\epsilon}) \\
   & =    &\lambda \left( \int_{X\times \Omega} c_1 d\pi_{\epsilon}+H(\pi_{\epsilon}) \right)+(1-\lambda) \left(\int_{X\times \Omega} c_2 d\pi_{\epsilon}+H(\pi_{\epsilon}) \right)  \\
   &\leq & \lambda P(c_1) + (1-\lambda) P(c_2).
\end{eqnarray*}

In order to prove item (d), for a given $\epsilon>0$, there exists $\pi_{\epsilon}$, such that,

\begin{eqnarray*}
P(c_1)-P(c_2) & \leq  & \int_{X\times \Omega} c_1 d\pi_{\epsilon}+ H(\pi_{\epsilon})+\epsilon - P(c_2) \\
& \leq & \int_{X\times \Omega} c_1 d\pi_{\epsilon}+ H(\pi_{\epsilon})+\epsilon - \left( \int_{X\times \Omega} c_2 d\pi_{\epsilon}+ H(\pi_{\epsilon})\right) \\
& \leq  & \int_{X\times \Omega} |c_1-c_2| d \pi_{\epsilon} + \epsilon \\
&  \leq &
\|c_1-c_2\| + \epsilon.
\end{eqnarray*}

$\hfill \Box$


\begin{thebibliography}{}




\bibitem{BCLMS}
A. T. Baraviera, L. Cioletti, A. O.  Lopes, J. Mohr and R. R. Souza,
On the general one-dimensional XY model: positive and zero temperature, selection and non-selection. \emph{Rev.Math. Phis.} 23,no. 10, 1063-1113, 82Bxx, 2011.


\bibitem{BLLbook} A. Baraviera, R. Leplaideur and A. O. Lopes, Ergodic Optimization, Zero temperature limits and the Max-Plus Algebra,
mini-course in XXIX Col\'oquio Brasileiro de Matem\'atica - IMPA - Rio de Janeiro (2013)

\bibitem{GL2} E. Garibaldi and A. Lopes, Functions for relative maximization, Dynamical Systems, v. 22, 511--528, 2007.


\bibitem{KW}
T. Kucherenko and C. Wolf,
Geometry and entropy of generalized rotation sets, Arxiv (2012)


\bibitem{Lal} S. Lalley,
Distribution of periodic orbits of symbolic and Axiom A flows.
Adv. in Appl. Math. 8, no. 2, 154--193, 1979.

\bibitem{Lop} A. O. Lopes, An analogy of charge distribution on Julia sets with the Brownian motion, \emph{J. Math. Phys. } 30 9, 2120-2124, 1989.



\bibitem{LM}
A. Lopes and J. Mengue, Duality Theorems in Ergodic Transport, Journal of Statistical Physics. Vol 149, issue 5, pp 921-942, 2012.

\bibitem{LMMS}
A. Lopes, J. K. Mengue, J. Mohr and R. R. Souza, Entropy and Variational Principle for one-dimensional Lattice Systems with a general a-priori probability: positive and zero temperature" to appear in \emph{Erg. Theo. and Dyn. Syst}.




\bibitem{PP}
W. Parry and M.  Pollicott, Zeta functions and the periodic
orbit structure of hyperbolic dynamics, \emph{Ast\'erisque}
Vol {187-188}, 1990.

\bibitem{PY}
M.  Pollicott and M. Yuri, \emph{Dynamical systems and Ergodic Theory}, Cambrige Press, 1998


\bibitem{Pr} F. Przytycki and M. Urbanski, \emph{Conformal Fractals}, Cambridge Press, 2010.

\bibitem{SSS} E. A. da Silva, R. R. da Silva and  R. R. Souza
The Analyticity of a Generalized Ruelle's Operator, to appear in \emph{Bull. Braz. Math. Soc.}

\bibitem{Ru} D. Ruelle, \emph{Thermodynamic Formalism}, Addison Welsey, 1978.



\bibitem{Vi1}
C. Villani, \emph{Topics in optimal transportation}, AMS, Providence, 2003.



\end{thebibliography}
\end{document}